\tikzset{curve/.style={settings={#1},to path={(\tikztostart)
    .. controls ($(\tikztostart)!\pv{pos}!(\tikztotarget)!\pv{height}!270:(\tikztotarget)$)
    and ($(\tikztostart)!1-\pv{pos}!(\tikztotarget)!\pv{height}!270:(\tikztotarget)$)
    .. (\tikztotarget)\tikztonodes}},
    settings/.code={\tikzset{quiver/.cd,#1}
        \def\pv##1{\pgfkeysvalueof{/tikz/quiver/##1}}},
    quiver/.cd,pos/.initial=0.35,height/.initial=0}
\tikzset{tail reversed/.code={\pgfsetarrowsstart{tikzcd to}}}
\tikzset{2tail/.code={\pgfsetarrowsstart{Implies[reversed]}}}
\tikzset{2tail reversed/.code={\pgfsetarrowsstart{Implies}}}
\tikzset{no body/.style={/tikz/dash pattern=on 0 off 1mm}}
\title{Flip graphs of coloured triangulations of convex polygons}
\author{Karin Baur} 
\email{ka.baur@me.com}
\author{Diana Bergerova} 
\email{D.Bergerova@sms.ed.ac.uk}
\author{Jenni Voon}
\email{jyyv2@cam.ac.uk}
\author{Lejie Xu}
\email{L.Xu-43@sms.ed.ac.uk}
\date{\today}
\theoremstyle{plain}
\newtheorem{thm}{Theorem}[section]
\newtheorem{lem}[thm]{Lemma}
\newtheorem{prop}[thm]{Proposition}
\newtheorem{coroll}[thm]{Corollary}
\newtheorem{conj}{Conjecture}
\theoremstyle{definition}
\newtheorem{defn}[thm]{Definition}
\newtheorem{exmp}[thm]{Example}
\newtheorem{ob}{Observation}
\theoremstyle{remark}
\newtheorem{remark}[thm]{Remark}
\newtheorem*{note}{Notation}
\begin{document}

\begin{abstract}
A triangulation of a polygon is a subdivision of it into triangles, using diagonals between its vertices. Two different triangulations of a polygon can be related by a sequence of flips: a flip replaces a diagonal by the unique other diagonal in the quadrilateral it defines. 
In this paper, we study coloured triangulations and coloured flips. 
In this more general situation, 
it is no longer true that any two triangulations can be linked by a sequence of (coloured) flips. 
In this paper, we study the connected components of the coloured flip graphs of triangulations. The motivation for this is a result of  
Gravier and Payan proving that the Four-Colour Theorem is equivalent to the connectedness of the flip graph of 2-coloured triangulations. 
\end{abstract}

\maketitle

\tableofcontents

%
\section{Introduction}\label{sec:intro}

A triangulation of a polygon is a subdivision of it into triangles, using diagonals between its vertices. Two different triangulations of a polygon can be related by a sequence of flips: a flip replaces a diagonal by the unique other diagonal in the quadrilateral it defines. 
In this paper, we study $n$-coloured triangulations and $n$-coloured flips: we allocate $n$ colours to the triangles and flip diagonals only if the two triangles incident with it have the same colour, say $i$. The flip then changes to colour of the two triangles according to the colour $i+1$ (reducing modulo $n$). 
When using colours, 
it is no longer true that any two triangulations can be linked by a sequence of (coloured) flips. 
In this paper, we study the connected components of the coloured flip graphs of triangulations. The motivation for this is a result of  
Gravier and Payan proving that the Four-Colour Theorem is equivalent to the connectedness of the flip graph of 2-coloured triangulations.

This article is structured as follows:  Section~\ref{sec:background} contains the background on triangulated polygons and introduces coloured triangulations. Then it explains the link between coloured triangulations and the Four-Colour theorem. In Section~\ref{sec:components}, we study the size and structure of the connected components of the coloured flip graph. Section~\ref{sec:observations} contains further observations and a conjecture.

%
\section{Background}\label{sec:background}

Here we recall the notions of triangulations of convex polygons. 
We write $P_n$ to denote a convex polygon with $n$ vertices. 

\begin{defn}[Triangulation]
    A \emph{triangulation} of $P_n$ is a subdivision of the polygon into triangles, using pairwise non-crossing diagonals. 
\end{defn} 

Boundary segments are not considered to be diagonals. 
Note that any triangulation of $P_n$ decomposes the polygon into $n-2$ triangles, using $n-3$ diagonals.

\begin{exmp} 
    A triangulation given by $n-3$ diagonals incident with a common vertex will be called a {\em fan triangulation}. An example of a fan triangulation of a hexagon 
    is in \cref{fig:fantriangulationexmp}. 

\begin{figure}[h!]
    \centering
    \begin{tikzpicture}[scale=.5]
      \tikzstyle{invisible} = [circle, fill, outer sep=1,inner sep=1,minimum size=0]
        \draw (-0.5,4.5) node[invisible] (v1) {} -- (1.6,3.2) node[invisible] {} -- (1.6,0.4) node[invisible] (v4) {} -- (-0.5,-1) node[invisible] (v3) {} -- (-2.6,0.4) node[invisible] (v2) {} -- (-2.6,3.2) node[invisible] {} -- (v1);
        \draw  (v1) edge (v2);
        \draw  (v1) edge (v3);
        \draw  (v1) edge (v4);
    \end{tikzpicture}
    \caption{A fan triangulation of a hexagon.}
    \label{fig:fantriangulationexmp}
\end{figure}
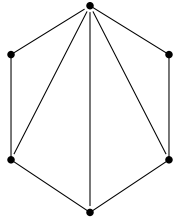
\end{exmp}

The following result is well-known. We include a proof for convenience. The strategy of the proof is illustrated for $n=8$ in \cref{fig:triangulationscountingoctagon}. 

\begin{lem}
\label{lem:catalannumbersgivenumberoftriangulations}
    The number of triangulations of a convex $(n+2)$-gon is given by the $n$-th Catalan number $C_{n} = \frac{1}{n+1}\binom{2n}{n}$.
\end{lem}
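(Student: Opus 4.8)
The plan is to establish a bijective recurrence for the number $T_m$ of triangulations of a convex $m$-gon and then recognise the recurrence as the defining recurrence for the Catalan numbers. Concretely, write $T_{n+2}$ for the number of triangulations of $P_{n+2}$, with the convention $T_2 = 1$ (the ``empty'' triangulation of a digon, or equivalently $T_3 = 1$ since a triangle has a unique triangulation). I would set up the bijection as follows: fix one boundary edge $e$ of $P_{n+2}$, say with endpoints labelled $1$ and $n+2$. In any triangulation, the edge $e$ lies in exactly one triangle, whose third vertex is some $k$ with $2 \le k \le n+1$. This vertex $k$ splits the polygon into two smaller convex polygons --- one on each side of the two new diagonals $\{1,k\}$ and $\{k,n+2\}$ (one or both of which may degenerate to a single edge when $k=2$ or $k=n+1$) --- and the triangulation restricts to an independent triangulation of each piece.

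The key step is to count carefully: if the apex is $k$, the polygon on the side of $\{1,k\}$ has $k$ vertices and the polygon on the side of $\{k, n+2\}$ has $n+3-k$ vertices, so the number of triangulations with apex $k$ is $T_k \cdot T_{n+3-k}$. Summing over $k = 2, \dots, n+1$ and reindexing with $i = k-2$ gives
\[
T_{n+2} \;=\; \sum_{i=0}^{n-1} T_{i+2}\, T_{n+1-i}.
\]
Writing $c_n := T_{n+2}$, this reads $c_n = \sum_{i=0}^{n-1} c_i c_{n-1-i}$ with $c_0 = 1$, which is exactly the standard Catalan recurrence; hence $c_n = C_n$, and one can either quote the closed form $C_n = \frac{1}{n+1}\binom{2n}{n}$ from the generating function $\sum C_n x^n = \frac{1-\sqrt{1-4x}}{2x}$ or verify it satisfies the recurrence. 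This matches the illustration for $n=8$ in \cref{fig:triangulationscountingoctagon}, where one fixes an edge of the octagon and sums over the six possible apex triangles.

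I expect the only real subtlety --- it is minor --- to be the bookkeeping at the degenerate ends of the sum: when the apex is adjacent to an endpoint of $e$, one of the two sub-polygons is just an edge and contributes a factor $T_2 = 1$, so the convention $T_2 = 1$ (equivalently $c_0 = 1$) must be chosen consistently for the index shift to produce precisely the Catalan recurrence rather than an off-by-one variant. Everything else is a routine induction: the base cases $T_3 = 1 = C_1$ (and $T_4 = 2 = C_2$) are immediate, and the inductive step is the displayed recurrence together with the known recurrence for $C_n$.
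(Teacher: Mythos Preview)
Your proof is correct and follows essentially the same approach as the paper: fix a boundary edge, enumerate the possible apex vertices of the triangle containing it, and recurse on the two resulting sub-polygons to obtain the Catalan recurrence. Your write-up is in fact more careful about the degenerate endpoints (the convention $T_2=1$) than the paper's sketch.
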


\begin{proof}
    The proof can be done using an inductive argument. One checks that the claim is true for $n=1$. Choose an edge $E$, and consider the triangle it is a part of. In an $(n+2)$-gon, there are $n$ other options for the third vertex of this triangle. All of these reduce the problem to one or two smaller cases, as to the left and right of this triangle, there are smaller polygons of size $m-1$ and $n+4-m$ respectively, for $m=3,\dots, n+2$. 
    (For $m=3$, there is only a polygon of size $n+1$ on the right of the triangle, for $m=n+2$, there is only a polygon of size $n+1$ on the left of the triangle.) We count the number of triangulations of these two subpolygons and let $m$ run: 
    This gives the total number of triangulations as $C_{n-1} + C_1 C_{n-2} + ... + C_{n-2}C_{1} + C_{n-1}$, which is a well-known recursive formula for the Catalan numbers.
\end{proof}

    \begin{figure}[h!]
    \includegraphics{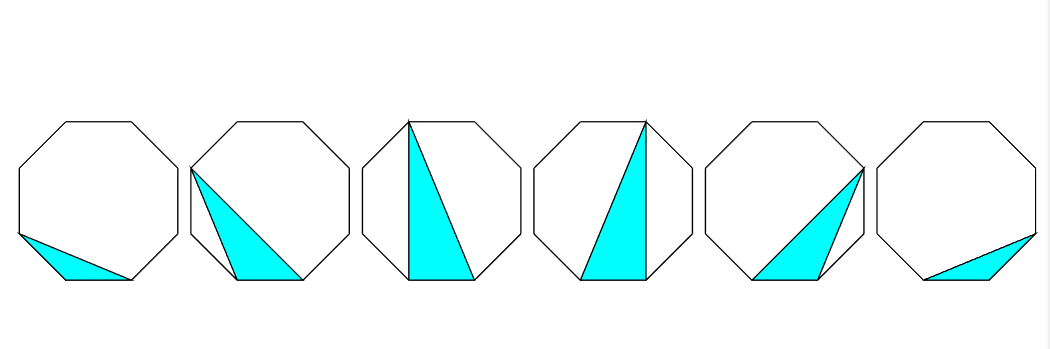}
    \caption{Each possible triangulation of the octagon falls into one of these 6 types.}
    \label{fig:triangulationscountingoctagon}
    \end{figure}

There is a well known move on triangulated surfaces: 
\begin{defn}\label{def:flip} 
    Let $t$ be a diagonal in the triangulation $T$ of $P_n$. This defines a quadrilateral of the two triangles containing $t$. Then there is a new triangulation $T'$ which is obtained by replacing the diagonal $t$ with the other diagonal of that quadrilateral. This local move is called a \emph{flip}. 
\end{defn}

It is a classical result that any two triangulation of a polygon can be linked by a sequence of flips (\cite{hatcher}). 

\begin{defn}\label{def:flip-graph}
    The \emph{flip graph} of $P_n$ is the graph whose vertices are triangulations of the polygon, and two vertices $T_1, T_2$ are connected by an edge if and only if there exists a (single) flip linking $T_1$ with $T_2$. 
\end{defn}

\subsection{Coloured triangulations, coloured flips}

In this article, we are interested in a generalisation of triangulations: we equip triangulations with a set of colours and define a new flip operation for them. 

Let $m\ge 1$ and let 
$C=\{1,\dots, m\}$ be a set of $m$ different colours. If $T$ is a triangulation of a polygon, we write $F(T)$ for the set of its triangles (faces). 

\begin{defn}[Colouring]
    Let $T$ be a triangulation of a convex polygon.  
    By a \emph{colouring} of $T$ we mean an assignment of colours from $1,\dots, m$ for every triangle of $T$. 
\end{defn}

\begin{defn}[Coloured flip]
    Let  
    $C=\{1,\dots, m\}$ be a set of colours and $\sigma\in S_m$ be a permutation. Let $T$ be a triangulation of a convex polygon with each triangle a colour in $C$. 
    Let $t\in T$ be a diagonal incident with two triangles of the same colour $i$. Then the \emph{$\sigma$-flip} of $T$ at $t$ is defined as follows:  
    \begin{enumerate}
        \item Replace $t$ by the flip of $t'$ in the underlying uncoloured triangulation. 
        \item Change the colours of the two triangles incident with $t'$ to the colour $\sigma(i)$.  
    \end{enumerate}
    If the permutation $\sigma$ is a single cycle of the form $(1,2,\dots, m)$ (i.e. $i\mapsto i+1$), we call a $\sigma$-flip simply an {\em $m$-coloured flip}. 
\end{defn}

\begin{defn}
    Let $P$ be a convex polygon and let $C=\{1,\dots, m\}$ be a set of colours, let $\sigma\in S_m$ be a permutation. 
    The {\em coloured flip graph} of $P$ {\em with colours $C$ and permutation $\sigma$} or the 
    {\em $\sigma$-flip graph of $P$}
    is the graph whose vertices are the coloured triangulations of $P_n$ and whose edges correspond to $\sigma$-flips. 
    We will often just call it the {\em flip graph} of the polygon. 
\end{defn}

The coloured triangulations are also counted in terms of Catalan numbers. We study the coloured flip graphs in this paper. We note that whenever no two adjacent triangles have the same colour, no edge can be flipped and we have an isolated vertex in the flip graph. 

\begin{lem}
\label{lem:numberofcolouredtriangulations} 
    Consider a convex $n+2$-gon $P_{n+2}$ and a set $C$ of $m$ colours. 
    \begin{enumerate}[label=(\roman*)]
        \item There are $C_{n}m^{n}$ coloured triangulations of $P_{n+2}$,
        \item There are $C_{n}m(m-1)^{n-1}$ triangulations of $P_{n+2}$ where none of the diagonals can be flipped.
    \end{enumerate}
\end{lem}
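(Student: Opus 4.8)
For part (i), the plan is to count coloured triangulations directly: by Lemma~\ref{lem:catalannumbersgivenumberoftriangulations} there are $C_n$ uncoloured triangulations of $P_{n+2}$, and each has exactly $n$ triangles, each of which may be independently assigned one of the $m$ colours. Hence the count is $C_n \cdot m^n$. The only thing to check carefully is that distinct (triangulation, colouring) pairs give distinct coloured triangulations, which is immediate from the definition since the colouring is recorded per triangle and the underlying triangulation is part of the data.

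For part (ii), I would again fix an uncoloured triangulation $T$ (there are $C_n$ of them) and count the proper colourings of its \emph{dual tree}: form the graph $G_T$ whose vertices are the $n$ triangles of $T$ and whose edges join two triangles sharing a diagonal. A coloured triangulation has no flippable diagonal precisely when no two adjacent triangles share a colour, i.e.\ when the colouring is a proper $m$-colouring of $G_T$. The key structural fact is that $G_T$ is a tree: it is connected (any triangulation of a polygon is connected through shared diagonals), and it has $n$ vertices and $n-3$ edges (one per diagonal), so it is acyclic, hence a tree. The number of proper $m$-colourings of a tree on $n$ vertices is $m(m-1)^{n-1}$ (pick a root, colour it in $m$ ways, then colour each remaining vertex after its parent in $m-1$ ways — this is the standard chromatic-polynomial evaluation). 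Summing over all $C_n$ triangulations gives $C_n m (m-1)^{n-1}$, as claimed.

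The main obstacle — though a mild one — is establishing cleanly that $G_T$ is a tree, and in particular that its edge count is exactly $n-3$ and that it is connected; both follow from Lemma~\ref{lem:catalannumbersgivenumberoftriangulations}'s observation that a triangulation of $P_{n+2}$ uses $n-3$ diagonals and $n$ triangles, plus a short induction (or a direct argument) on connectedness. Once the dual graph is identified as a tree, both counts reduce to entirely standard facts, and it is worth being explicit that the count in (ii) is independent of \emph{which} triangulation $T$ we chose, which is exactly why the sum factors as $C_n$ times the colouring count for a single tree.
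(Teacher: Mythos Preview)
Your approach is essentially identical to the paper's: count uncoloured triangulations by Catalan numbers, then for (i) multiply by the $m^n$ free colour choices, and for (ii) pass to the dual graph, observe it is a tree, and use the standard proper-colouring count $m(m-1)^{n-1}$ for trees.

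One slip to fix: a triangulation of $P_{n+2}$ has $(n+2)-3 = n-1$ diagonals, not $n-3$, so the dual graph $G_T$ has $n$ vertices and $n-1$ edges. As you wrote it, a connected graph on $n$ vertices with only $n-3$ edges is impossible, so the stated reasoning is internally inconsistent; with the correct edge count $n-1$, connectedness immediately gives that $G_T$ is a tree, and the rest of your argument goes through unchanged.
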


\begin{proof}
    $(i)$. Any triangulation of $P_{n+2}$ has $n$ triangles, so there are $m^n$ different ways to colour a triangulation. The claim then follows from \cref{lem:catalannumbersgivenumberoftriangulations}. 

    $(ii)$. We consider the dual graph $G_T$ to a given triangulation $T$ of $P_{n+2}$: it has as vertices the triangles in $T$ and an edge between the two vertices of adjacent triangles. This graph is known to be a tree, it has $n$ vertices and at least two leaves. We start by colouring one leaf with one of the $m$ colours and then proceed to colour adjacent vertices. Since $G$ is a tree, for any new vertex we want to colour, there are $m-1$ options. Hence the factor $(m-1)^{n-1}$. The claim then follows with \cref{lem:catalannumbersgivenumberoftriangulations}. 
\end{proof}

\subsection{Triangulations of polygons and the Four-Colour Theorem}\label{sec:motivation}

The \emph{Four-Colour Theorem} is one of the most famous mathematical problems in history. It concerns the question whether four colours are enough to colour any map drawn in the plane. 
In 1977,  Appel, Haken and Koch 
established that four colours are enough (see \cite{10.1215/ijm/1256049011} and \cite{10.1215/ijm/1256049012}): 

\begin{thm}[Four-Colour Theorem]\label{thm:fourcolourthm}
    Any map on $\mathbb{R}^2$ can be colored using four colors such that any two regions sharing an edge are of different colours. 
\end{thm}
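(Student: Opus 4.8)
The plan is to follow the discharging-and-reducibility strategy of Appel, Haken and Koch, as cited above (and streamlined later by Robertson, Sanders, Seymour and Thomas). First I would translate the statement into a purely graph-theoretic one: by planar duality and standard Euler-characteristic bookkeeping it suffices to properly $4$-colour the \emph{vertices} of every maximal planar graph, i.e.\ every plane triangulation. Moreover one may assume this triangulation enjoys a high degree of connectivity (for instance, that it is internally $6$-connected), since a small separating cycle lets one cut the graph into smaller pieces, $4$-colour each piece, and glue the colourings back together after permuting colours. So suppose for contradiction that some plane triangulation is not $4$-colourable, and fix one, $G$, with the fewest vertices.

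Next I would produce an \emph{unavoidable set}: a finite list of \emph{configurations} (small near-triangulations, each surrounded by a ring) with the property that every internally $6$-connected plane triangulation, and in particular $G$, contains at least one of them. The tool here is the \emph{discharging method}: assign to each vertex $v$ the charge $6-\deg(v)$, so that by Euler's formula the total charge equals $12>0$; then redistribute charge between vertices according to a fixed set of local rules, arranged so that if \emph{none} of the configurations on the list occurred in $G$ then every vertex would end with non-positive charge, contradicting positivity of the total. Choosing the discharging rules together with the matching list of configurations is the combinatorial core of the proof.

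Then, for each configuration $K$ on the list I would establish \emph{reducibility}: under the assumption that every proper subgraph of $G$ is $4$-colourable, any $4$-colouring of the graph obtained from $G$ by deleting the interior of $K$ can be modified — typically by a sequence of Kempe-chain interchanges along the ring bounding $K$ — so that it extends to a proper $4$-colouring of all of $G$. Since $G$ contains some $K$ from the unavoidable set, and removing the interior of $K$ yields a strictly smaller (hence $4$-colourable) triangulation, reducibility of $K$ then gives a $4$-colouring of $G$ itself, contradicting the choice of $G$. This contradiction proves \cref{thm:fourcolourthm}.

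The main obstacle is one of scale rather than of conceptual difficulty: the unavoidable set is enormous (several hundred to roughly two thousand configurations, depending on the formulation), and verifying reducibility of each one — enumerating all ring colourings and tracing the resulting Kempe chains — is far beyond what can be done by hand. This is precisely why the theorem is proved with computer assistance, and for the purposes of this paper I simply invoke the verified computations of \cite{10.1215/ijm/1256049011,10.1215/ijm/1256049012}.
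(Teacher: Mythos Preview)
Your outline is a faithful sketch of the Appel--Haken--Koch discharging-and-reducibility proof (and its later streamlining), and as such it is correct in broad strokes. However, the paper does not itself prove \cref{thm:fourcolourthm}: it is stated purely as background, with the remark immediately following that ``This result was proved with the assistance of computers. So far, there is no abstract proof of this theorem.'' The paper's contribution lies elsewhere --- in studying coloured flip graphs and in recalling, in the appendix, Gravier and Payan's proof that the Four-Colour Theorem is \emph{equivalent} to the signed-flip statement of \cref{thm1}. So there is nothing to compare your argument against; the paper simply invokes \cite{10.1215/ijm/1256049011,10.1215/ijm/1256049012}, exactly as you end up doing in your final paragraph.
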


This result was proved with the assistance of computers. So far, there is no abstract proof of this theorem. The main motivation of this project is the search for an alternative 
approach to its proof. 
In 2002, Gravier and Payan showed that the Four-Colour Theorem is equivalent to a question on coloured flip graphs: 

\begin{conj}\label{conj1}
    Let $T_1,T_2$ be two arbitrary triangulations of a convex polygon $P$. Let $C=\{1,2\}$ be two colours. 
    Then there exist colourings of $T_1$ and of $T_2$ such that there is a sequence of $2$-coloured flips between the two coloured triangulations. 
\end{conj}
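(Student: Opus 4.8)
The plan is to deduce \cref{conj1} from the Four-Colour Theorem (\cref{thm:fourcolourthm}), by making explicit the equivalence of Gravier and Payan between $2$-coloured (``signed'') flip sequences on triangulations of a convex polygon and proper $4$-colourings of an associated planar triangulation. If one is willing to cite their equivalence directly, \cref{conj1} is then immediate; but let me sketch the underlying argument, since that is where the content lies. Only the implication ``$4$CT $\Rightarrow$ \cref{conj1}'' is needed, and I concentrate on it, recovering the reverse implication only to confirm that the statement is genuinely \emph{equivalent} to the Four-Colour Theorem.

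First I would glue the two triangulations into a closed surface. Given triangulations $T_1$ and $T_2$ of $P:=P_{n+2}$, take a copy of $P$ carrying $T_1$ and another carrying $T_2$ and identify them along their common boundary polygon, obtaining a triangulation $\widehat T:=T_1\cup_{\partial P}T_2$ of the $2$-sphere on the same $n+2$ vertices, with the $n$ triangles of $T_1$ filling one hemisphere and the $n$ triangles of $T_2$ the other; an Euler-characteristic count confirms $\widehat T$ is a genuine sphere triangulation. By \cref{thm:fourcolourthm} (applied to the dual map of $\widehat T$), there is a proper $4$-colouring $c$ of the vertices of $\widehat T$. Fixing one, with colour set the Klein four-group, I pass to the Grünbaum edge-labelling $\phi(uv)=c(u)+c(v)$, under which every triangular face of $\widehat T$ receives all three nonzero colours; restricting $\phi$ to each hemisphere gives such a labelling of $T_1$ and of $T_2$ separately.

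Next I would carry out the combinatorial core: a recipe converting the $4$-colouring $c$ (via $\phi$) into $2$-colourings $c_1,c_2$ of the faces of $T_1,T_2$ together with an \emph{explicit} sequence of $2$-coloured flips carrying $(T_1,c_1)$ to $(T_2,c_2)$, thereby placing them in the same connected component of the $2$-coloured flip graph of $P$ --- which is exactly the assertion. The flips should be choreographed by the Kempe chains (maximal two-colour subgraphs) of $c$: each elementary adjustment of $c$ --- ultimately a Kempe switch --- is matched with one admissible $2$-coloured flip, and pushing all adjustments through converts $T_1$ (with $c_1$) into $T_2$ (with $c_2$). Along the way I must respect the visible flip invariants; in particular, a flip toggles the colours of exactly two faces, so the parity of the number of faces of each colour is preserved, and the recipe must therefore output $c_1$ and $c_2$ with compatible parity, which one arranges using the freedom in the choice of $4$-colouring.

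I expect the main obstacle to be precisely this last step: pinning down the dictionary so that one $2$-coloured flip corresponds to one elementary move on the colouring side, and --- in the converse direction --- so that connectedness of components of the $2$-coloured flip graph matches Kempe-reachability among the relevant $4$-colourings of $\widehat T$ (for the converse one must also show that an arbitrary planar triangulation $G$ can be cut along a suitable cycle so as to be presented as a doubled polygon triangulation, and that a flip sequence decodes back into a $4$-colouring of $G$). Finally, it should be stressed that the genuinely desirable outcome --- and the reason \cref{conj1} is stated as a conjecture rather than a theorem --- would be an \emph{elementary}, self-contained proof that does not route through the computer-assisted \cref{thm:fourcolourthm} at all; supplying such an argument is the real open problem.
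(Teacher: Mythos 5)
The statement you are addressing is stated in the paper as a \emph{conjecture}, and the paper deliberately gives no proof of it: as explained immediately after \cref{thm1}, the whole point is that an abstract proof of \cref{conj1} --- one that does \emph{not} route through the computer-assisted \cref{thm:fourcolourthm} --- would yield a new proof of the Four-Colour Theorem. The only complete path through your proposal is ``cite \cref{thm1} together with \cref{thm:fourcolourthm}''. That deduction is logically valid: the union $T_1\cup T_2$ of the two triangulations is a planar graph (your doubling to a sphere triangulation is one way to see this), so the Four-Colour Theorem supplies a common proper vertex $4$-colouring, which can always be arranged to use all four colours when $T_1\neq T_2$, and the $\Leftarrow$ direction of \cref{thm:equiv} in the appendix then produces the flip sequence. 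You are candid that this defeats the purpose; but it means the proposal establishes nothing beyond what the paper already records and does not close the conjecture in the intended sense.

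Where you go beyond the citation, the ``combinatorial core'' is only announced, not carried out, and the mechanism you describe is not the one in the known proof. A single $2$-coloured flip leaves the vertex $4$-colouring entirely unchanged (\cref{rem:effect-flip}); the flip sequence connects two signed triangulations carrying the \emph{same} colouring, so there is no sequence of recolourings to track, and a dictionary matching Kempe switches one-for-one with admissible flips cannot be the right engine. The actual argument for the $\Leftarrow$ direction (reproduced in \cref{sec:4colour-signed}) is a minimal-counterexample induction on the size of the polygon: one fixes a degree-$2$ vertex $x$ of $T$, partitions the equivalence class of $(T',v')$ according to the valuations of the diagonals at $x$ into the sets $\mathcal T_1$ and $\mathcal T_2$, and shows both are empty. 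Your parity worry is real (cf.\ \cref{thm:cycles-are-even}) but is absorbed by working with the class $(T,s)=\{\{T,s\},\{T,\overline{s}\}\}$. In short: as a deduction from the Four-Colour Theorem the claim is fine but is precisely what the paper declines to count as a proof, and as a self-contained argument the decisive step is missing.
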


\begin{thm}[\cite{GRAVIER2002817}]\label{thm1}
    Given any two triangulations of a convex polygon, it is possible to transform one into the other by a sequence of $2$-coloured flips if and only if 
    the Four-Colour Theorem holds. 
\end{thm}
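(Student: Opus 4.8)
The plan is to reduce the statement to the classical reformulation of \cref{thm:fourcolourthm} for triangulations of the $2$-sphere, by means of a gluing construction. Given two triangulations $T_1, T_2$ of $P_n$, identify the boundaries of two copies of $P_n$, one carrying $T_1$ and the other $T_2$; the result is a triangulation $\Delta(T_1,T_2)$ of $S^2$ with the $n$ boundary vertices of $P_n$ as its vertices and the $n-2$ faces of $T_1$ together with the $n-2$ faces of $T_2$ as its $2n-4$ triangles. A $2$-coloured flip performed on $T_1$ (resp.\ $T_2$) inside $P_n$ is literally a $2$-coloured flip of $\Delta(T_1,T_2)$ that only touches the faces on the $T_1$-side (resp.\ the $T_2$-side), and a colouring of the faces of $\Delta(T_1,T_2)$ restricts to colourings of $T_1$ and of $T_2$.

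Everything rests on the lemma $(\star)$: \emph{$T_1$ can be transformed into $T_2$ by a sequence of $2$-coloured flips, for some choice of colourings of $T_1$ and of $T_2$, if and only if $\Delta(T_1,T_2)$ admits a proper $4$-colouring of its vertices.} Granting $(\star)$, the theorem follows. If \cref{thm:fourcolourthm} holds, then every $\Delta(T_1,T_2)$ is a planar triangulation and hence $4$-colourable, so by $(\star)$ every pair $T_1, T_2$ is connected by $2$-coloured flips. Conversely, if \cref{thm:fourcolourthm} fails then, by the classical reductions (a minimal counterexample may be assumed to be a triangulation with no separating triangle, hence $4$-connected), there is a $4$-connected triangulation $S$ of $S^2$ with no proper $4$-vertex-colouring; by Tutte's theorem that every $4$-connected planar graph is Hamiltonian, $S$ has a Hamiltonian cycle $\gamma$, and cutting $S$ open along $\gamma$ displays $S = \Delta(T_1,T_2)$ for two triangulations $T_1, T_2$ of $P_n$ with $n = |\gamma|$ (the point being that $\gamma$ meets every vertex, so neither side has an interior vertex). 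Then $(\star)$ shows this pair cannot be joined by $2$-coloured flips, contradicting the hypothesis.

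To prove $(\star)$ I would run it through Tait's correspondence: a proper $4$-colouring $\kappa$ of the vertices of a triangulation of $S^2$, with colours in the Klein four-group $(\mathbb{Z}/2\mathbb{Z})^2$, is the same datum as the edge-colouring $uv \mapsto \kappa(u)+\kappa(v)$, which uses the three non-zero elements of $(\mathbb{Z}/2\mathbb{Z})^2$ and makes the three edges of every triangle rainbow. From a rainbow edge-colouring one reads off a $2$-colouring $c$ of the faces by recording, for each triangle, the cyclic order in which the three edge-colours appear around it relative to a fixed orientation of $S^2$. A short local computation then shows: an edge is monochromatic for $c$ exactly when the two vertices opposite it --- the ones completing its two incident triangles --- have \emph{distinct} $\kappa$-colours, which is exactly the situation in which flipping that diagonal leaves $\kappa$ a proper colouring; and the recolouring rule of a $2$-coloured flip carries $c$ to the $2$-colouring attached to $\kappa$ on the flipped triangulation. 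Thus, with $\kappa$ held fixed, the $2$-coloured flips available from the colouring $c$ are exactly the ``$\kappa$-admissible'' flips of the underlying triangulation (those that keep $\kappa$ proper). Since a proper $4$-colouring of $\Delta(T_1,T_2)$ restricts to proper colourings of $T_1$ and of $T_2$ with the same boundary colours, the implication ``$\Delta(T_1,T_2)$ $4$-colourable $\Rightarrow$ $T_1, T_2$ connectable'' reduces to: any two triangulations of $P_n$ on which a fixed proper colouring of the boundary remains proper are joined by a sequence of $\kappa$-admissible flips. The reverse implication amounts to showing that a $2$-coloured flip sequence from $T_1$ to $T_2$ forces the existence of such a common proper colouring.

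The step I expect to be the main obstacle is exactly this connectivity statement for $\kappa$-proper triangulations under $\kappa$-admissible flips. The uncoloured trick of flipping everything down to a fan is unavailable, because a fan is rarely $\kappa$-proper; instead one must design a $\kappa$-adapted canonical triangulation --- for instance by peeling off $\kappa$-admissible ``ears'' along the boundary cycle --- and prove both that an admissible move always exists away from this canonical form and that every $\kappa$-proper triangulation reaches it. For the reverse implication of $(\star)$ one then needs, dually, either to rectify an arbitrary $2$-coloured flip sequence (which may run through $2$-colourings not of the form $c$) into one that keeps a single proper colouring fixed, or to isolate a coarser flip-invariant that still certifies $4$-colourability. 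It also remains to dispose of the small cases ($n \le 4$, and $T_1 = T_2$, where $\Delta(T,T)$ should be seen to be $4$-colourable unconditionally) and to confirm that the two external inputs --- $4$-connectivity of a minimal counterexample, and Tutte's Hamiltonicity theorem --- are available in the form used above.
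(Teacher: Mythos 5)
Your reduction is essentially the one the paper (following Gravier and Payan) uses: the dictionary between $2$-colourings of the faces and proper $4$-colourings of the vertices (the paper phrases it via signed triangulations, valuations and weightings rather than Tait's edge-colouring, but it is the same correspondence --- a diagonal is flippable exactly when the two opposite vertices of its quadrilateral receive different colours, and a coloured flip preserves the induced vertex colouring), together with the gluing of $T_1$ and $T_2$ along the boundary and the Whitney/Tutte Hamiltonicity reduction of the Four-Colour Theorem to such doubled polygon triangulations. The forward direction of your lemma $(\star)$ (a flip sequence forces a common proper colouring) is the easy half and is fine.

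However, the backward direction of $(\star)$ --- that two distinct triangulations of $P_n$ carrying the same proper vertex colouring, using all four colours, are joined by a sequence of admissible coloured flips --- is the entire content of the theorem, and you have not proved it: you explicitly flag it as ``the main obstacle'' and offer only a heuristic (``peel off $\kappa$-admissible ears'', ``design a $\kappa$-adapted canonical form'') with no argument that an admissible move always exists or that the canonical form is reachable. This is where all the work lies. The paper's proof (the $\Longleftarrow$ direction of Theorem~\ref{thm:equiv}) takes a minimal counterexample $P$, chooses a degree-$2$ vertex $x$ of $T$ whose removal keeps four colours, and shows that the equivalence class of $(T',v')$ must contain a triangulation in which $x$ again has degree $2$; this requires splitting the class into the sets $\mathcal T_1$ (some diagonal at $x$ of value $0$) and $\mathcal T_2$ (all diagonals at $x$ of value $1$) and killing both by a descent argument, using Lemma~\ref{lem:zeroweight} (the weight $p(x)$ vanishes iff the two boundary neighbours of $x$ share a colour) to derive the contradictions. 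None of this, nor a substitute for it, appears in your proposal. There are also smaller loose ends you acknowledge but do not settle: the case where the common colouring uses only three colours (then both triangulations are alternating and hence isolated in the flip graph, so one must first perturb the colouring to introduce a fourth colour), and the base cases $T_1=T_2$ and small $n$.
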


So in order to give an abstract approach to the Four-Colour Theorem, it is enough to give an 
abstract prove of \cref{conj1}.
We will recall the proof of \cref{thm1} in \cref{sec:4colour-signed}.

%
\section{Connected components of coloured flip graphs}\label{sec:components}

We first show that when studying coloured flip graphs, it is enough to consider $1$-coloured and $2$-coloured flips.

\begin{lem}\label{lm:one-cycle}
    It is enough to determine $\sigma$-flip graphs for single cycle permutations. 
\end{lem}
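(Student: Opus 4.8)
The statement to prove is that, for the purpose of understanding connected components, it suffices to restrict attention to $\sigma$-flips where $\sigma$ is a single cycle. The plan is to observe that the cycle decomposition of $\sigma$ partitions the colour set $C = \{1,\dots,m\}$ into $\sigma$-orbits $O_1,\dots,O_k$, and that a $\sigma$-flip never moves a triangle's colour out of its orbit: if a diagonal is incident with two triangles of colour $i$, the flip recolours them to $\sigma(i)$, which lies in the same cycle as $i$. Hence the orbit-type of a coloured triangulation — the function recording, for each triangle, which cycle of $\sigma$ its colour belongs to — is invariant under $\sigma$-flips.

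First I would make this invariance precise: define, for a coloured triangulation $(T,c)$, the induced ``coarse colouring'' $\bar c$ that assigns to each triangle the index of the $\sigma$-orbit containing $c$ of that triangle, and check directly from the definition of a $\sigma$-flip that $\bar c$ is unchanged. This shows the $\sigma$-flip graph is a disjoint union over all coarse colourings, and within each piece, the only flippable diagonals are those whose two incident triangles lie in the same orbit and actually have the same (fine) colour. So the connected component of $(T,c)$ is determined entirely by the restriction of $\sigma$ to each single cycle acting on the triangles assigned to that cycle. Concretely, the component is a product (in the graph-Cartesian sense, once one is careful that flips on different orbits are genuinely independent local moves) of the components one gets by running, on the triangles of each orbit $O_j$, the $\sigma|_{O_j}$-flip dynamics — and $\sigma|_{O_j}$ is a single cycle of length $|O_j|$.

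The remaining point is to reduce a single-cycle permutation on $m$ colours to the ``canonical'' cycle $(1,2,\dots,m)$, i.e. $i \mapsto i+1 \bmod m$: this is immediate by relabelling the colours via a bijection conjugating one cycle to the other, which induces an isomorphism of $\sigma$-flip graphs. I would state this relabelling explicitly so that ``$\sigma$-flip graph for a single cycle'' and ``$m$-coloured flip graph'' are identified. I expect the main obstacle to be purely expository rather than mathematical: one must be careful about what ``it is enough'' means — spelling out that a $\sigma$-flip graph decomposes as a disjoint union of graphs, each of which is built from $m_j$-coloured flip graphs (for the cycle lengths $m_j$) on sub-configurations, and that flips belonging to distinct orbits commute and act on disjoint sets of triangles so no interaction is lost. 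Once that bookkeeping is set up, there is no hard computation; the content is entirely the orbit-invariance observation plus relabelling.
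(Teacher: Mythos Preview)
Your proposal is correct and rests on the same key observation as the paper's proof: the $\sigma$-orbit of a triangle's colour is invariant under any $\sigma$-flip, so the polygon decomposes into invariant regions indexed by the cycles of $\sigma$, and on each region the dynamics is governed by a single cycle. The paper states this in one sentence (``we can divide the polygon up into smaller polygons, corresponding to areas with colours in the different cycles; these are invariant'') and then concludes with a remark about replacing $\sigma$ by a single cycle of length the common multiple of the cycle lengths; your Cartesian-product description of the components, together with the relabelling to the canonical $m$-cycle, is a more careful and more informative packaging of the same reduction.
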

\begin{proof}
    If we have colours from multiple cycles, then we can divide the polygon up into smaller polygons, corresponding to areas with colours in the different cycles. These are invariant, as the cycle a colour comes from is invariant under a flip. Hence if we can get between permutations by using a colour permutation with multiple cycles, we can also get between them by using a single cycle with length the highest common multiple of all previous cycle lengths.
\end{proof}

So from now on we will assume that $\sigma$ is a single cycle of length $m$. We now show that it is enough to consider $m=1$ or $m=2$ depending on whether $m$ is odd or even. 

\begin{lem} 
%
Let $C=\{1,2,\dots,m\}$ be $m$ colours. To determine whether two 
triangulations $T_1$ and $T_2$ are linked by an $m$-coloured flip sequence, it is enough to consider the $1$-coloured case if $m$ is odd or the $2$-coloured case if $m$ is even. 
\end{lem}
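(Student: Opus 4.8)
The idea is to exploit the structure of $m$-coloured flips when $m$ has nontrivial divisors, reducing step by step to the cases $m = 1$ and $m = 2$. The key observation is that an $m$-coloured flip only acts on the colour $i$ via the map $i \mapsto i+1 \pmod m$, and only ever involves two adjacent triangles of the \emph{same} colour. So if $d \mid m$, the colours can be partitioned into residue classes modulo $d$, and the class of each triangle's colour is an invariant of any flip sequence (since $i$ and $i+1$ lie in the same class precisely when $d \mid 1$, i.e. never, unless $d = 1$). Wait — that is the wrong grouping; the right one is the opposite: group colours by their value modulo $d$ is \emph{not} preserved, but the ``coarsening'' map $\pi_d : \{1,\dots,m\} \to \{1,\dots,d\}$ sending $i \mapsto \lceil i / (m/d) \rceil$ is also not quite preserved. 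The clean statement is: the map $q_d : i \mapsto i \bmod d$ (values in $\mathbb{Z}/d$) intertwines the $m$-coloured flip on $\mathbb{Z}/m$-coloured triangulations with the $d$-coloured flip on $\mathbb{Z}/d$-coloured triangulations, because $i \mapsto i+1$ on $\mathbb{Z}/m$ descends to $i \mapsto i+1$ on $\mathbb{Z}/d$, and two triangles of equal $\mathbb{Z}/m$-colour certainly have equal $\mathbb{Z}/d$-colour. The plan is to push this through in two directions.

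First I would establish the \textbf{forward reduction}: if $T_1$ and $T_2$ (with given colourings) are linked by an $m$-coloured flip sequence, then applying $q_d$ to all the colours produces a $d$-coloured flip sequence linking $q_d(T_1)$ and $q_d(T_2)$. This is immediate from the intertwining property above. Taking $d$ to be the smallest prime dividing $m$ — that is, $d = 2$ if $m$ is even, and $d$ an odd prime if $m$ is odd — gives a necessary condition. For the odd case I would iterate: if $m$ is odd, every prime divisor is odd, so I would further reduce $d$-coloured flips (for $d$ an odd prime) down to $1$-coloured flips. The cleanest route here is to observe that for the $1$-coloured flip graph there is no colour constraint at all — every diagonal is always flippable, every triangulation is a single class — so ``linked by $1$-coloured flips'' is vacuously true for \emph{any} two triangulations of $P_n$ (by the classical connectivity of the ordinary flip graph, \cite{hatcher}). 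Hence the $m$ odd case amounts to showing that the underlying uncoloured triangulations are always linkable, which they are.

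That reframes the real content as the \textbf{converse direction}, and this is where the main obstacle lies: I must show that if the reduced data can be linked (which, for $m$ odd, is automatic), then one can \emph{choose} colourings of $T_1$ and $T_2$ and an honest $m$-coloured flip sequence between them. For $m$ odd, the strategy is to lift an ordinary flip sequence $T_1 = S_0, S_1, \dots, S_k = T_2$ to a coloured one: between consecutive $S_j, S_{j+1}$ a single ordinary flip occurs in some quadrilateral; I would colour the two triangles of that quadrilateral with a common colour $c$ (legal since the flip happens there) and perform the $m$-coloured flip, which recolours them to $c+1$. The difficulty is that the two triangles of the quadrilateral in $S_j$ may already carry colours forced by earlier steps, and those need not agree. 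Here oddness of $m$ is essential: because $\gcd(2, m) = 1$, a single triangle can be ``cycled'' through all $m$ colours by repeatedly flipping one of \emph{its} diagonals back and forth with a neighbour — each back-and-forth pair of flips on a fixed edge advances the colour of that triangle by $2$, and since $2$ generates $\mathbb{Z}/m$ when $m$ is odd, we can reach any target colour. So the plan is: (1) prove this ``local recolouring'' lemma, that adjacent to any edge of a triangle we can adjust that triangle's colour by any element of $\langle 2 \rangle = \mathbb{Z}/m$ while returning the rest of the triangulation to its original coloured state; (2) use it to make the two triangles of each quadrilateral agree in colour just before each flip of the lifted sequence; (3) conclude.

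For $m$ even the same template applies but $\langle 2 \rangle = 2\mathbb{Z}/m$ is an index-$2$ subgroup, so local recolouring only lets us adjust a triangle's colour \emph{within its parity class}; the parity (= value mod $2$) is a genuine obstruction, and that is exactly the invariant detected by $q_2$. So for $m$ even the claim is that the $m$-coloured flip graph is, after bookkeeping, ``the same'' as the $2$-coloured one: I would define the map sending an $m$-colouring to its parity $2$-colouring, show (forward direction, above) it carries $m$-flip sequences to $2$-flip sequences, and conversely show any $2$-flip sequence lifts — given a $2$-coloured flip between parity-$0$/parity-$1$ data, realise it by an $m$-coloured flip after using local recolouring to force the two relevant triangles into a common $m$-colour of the correct parity. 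The one subtlety to check carefully is that the local-recolouring moves, which temporarily disturb neighbouring triangles, can always be undone; this is where I expect to spend the most effort, and I would isolate it as a standalone lemma about ``parking'' a triangle's colour. Granting that lemma, assembling the equivalence in both the odd and even cases is routine.
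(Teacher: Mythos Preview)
Your approach is substantially more elaborate than the paper's, and the central lemma you isolate --- the ``local recolouring'' claim that one can adjust a single triangle's colour by any element of $\langle 2\rangle \subset \mathbb{Z}/m$ \emph{while returning the rest of the triangulation to its original coloured state} --- does not hold as stated. A back-and-forth pair of flips on a fixed edge advances the colours of \emph{both} triangles of that quadrilateral by $2$, not just one of them; there is no way to park the neighbour back at its original colour without undoing the change to the target triangle. One could try to propagate the disturbance outward and cancel it elsewhere, but this is genuinely delicate (you need the neighbour-of-the-neighbour to already match colours to even begin), and you have not indicated how you would do it. So the lifting argument in both parities currently has a gap at exactly the point you flagged as requiring ``the most effort''.

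The paper sidesteps this entirely with a one-line trick you are missing: stay inside a single quadrilateral and flip the \emph{same} diagonal repeatedly. For $m$ odd, colour every triangle of $T_1$ with colour~$1$; then any diagonal is flippable, and $m$ consecutive flips on it return the colours to $1$ while effecting a net single (uncoloured) flip, since $m$ is odd. So any ordinary flip sequence from $T_1$ to $T_2$ lifts by replacing each move with $m$ moves, and the triangulation stays monochromatic throughout --- no recolouring lemma needed. For $m$ even, take a $2$-coloured flip sequence and use those same two colours as $m$-colours: a $1\!\to\!2$ flip lifts to one $m$-flip, and a $2\!\to\!1$ flip lifts to $m-1$ consecutive $m$-flips on that edge (the two triangles stay equal-coloured throughout, and $m-1$ is odd). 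Your forward reduction via the quotient map $q_d:\mathbb{Z}/m\to\mathbb{Z}/d$ is correct and is arguably a cleaner way to state the ``only if'' direction than the paper does, but for the ``if'' direction you should replace the recolouring lemma with this repeat-the-same-flip device.
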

\begin{proof}
    By Lemma~\ref{lm:one-cycle}, we can assume $\sigma$ is a singe cycle of length $m$:
    If $m$ is odd,  then colour every triangle in $T_1$ colour $1$, the first colour in the $m$-cycle. Note that after flipping the same quadrilateral $m$ times, the colour will still be the same but the diagonal is flipped. So by finding a path of uncoloured flips between $T_1$ and $T_2$, and replacing each move with $m$ moves on the same diagonal, we have found a path between them which respects the permutation (after each set of $m$ moves, the entire shape is still the same colour).
    If $m$ is even, and we can find a 2-coloured flip sequence between $T_1$ and $T_2$, then we can do so for any even cycle. Use the same colourings for $T_1$ and $T_2$, and find a path between them as follows: if in the original path we are going from colour 1 to colour 2, perform one flip. If we are going from colour 2 to colour 1, replace the single flip with m-1 flips, as this will change the colour and diagonal as we wanted.  
\end{proof}
An example is given in Figure \ref{fig:4col-to-2col}, where the yellow to red flip is replaced by 3 flips, and the red to yellow flip is left as a single flip.


\begin{figure}[htbp]
    \includegraphics[width = 0.7\textwidth]{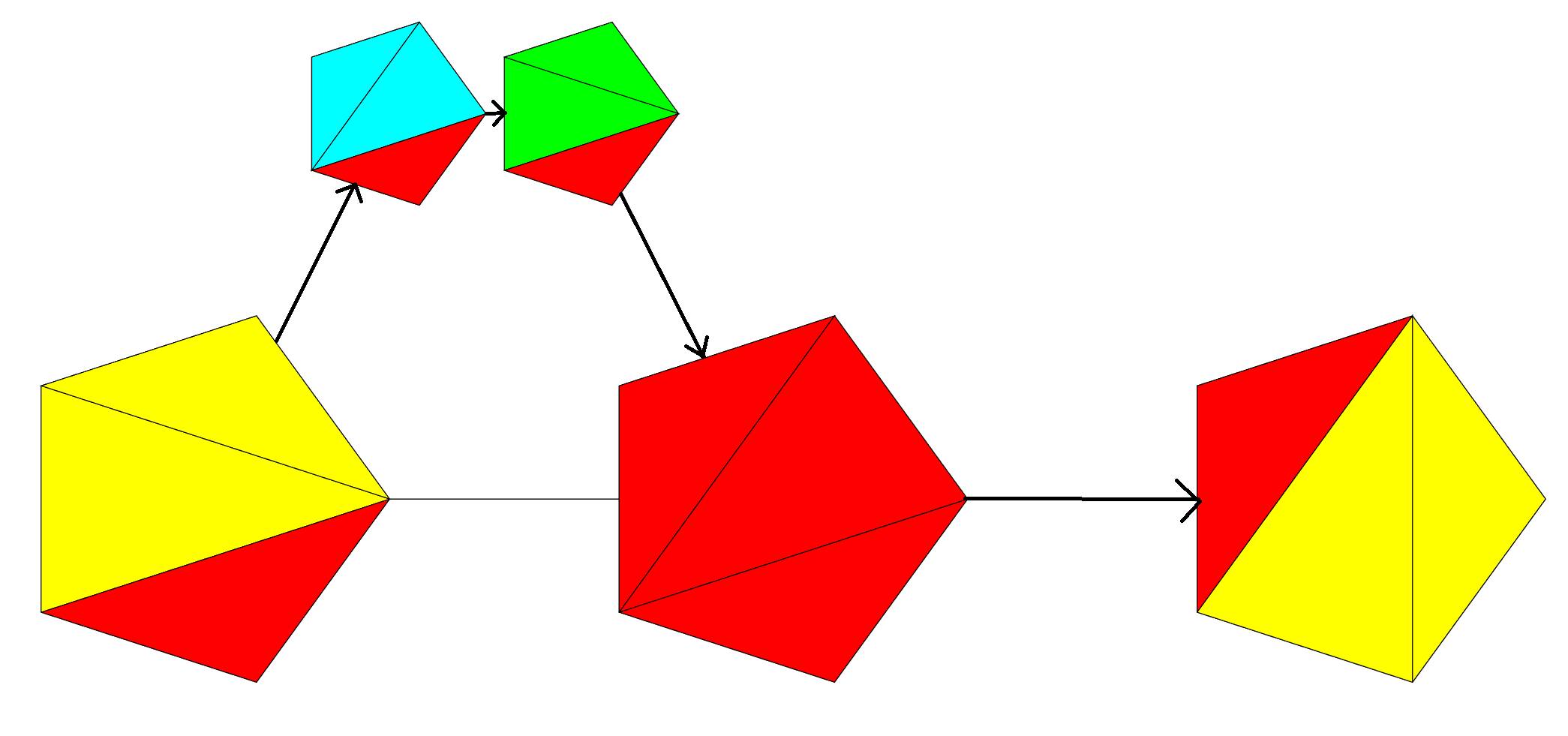}
    \caption{A flip sequence with two colours, translated into a sequence with four colours \\
    (with $\sigma = $ (red, yellow, cyan, green)) }\label{fig:4col-to-2col}
\end{figure}

\begin{exmp} 
We will again consider an example of coloured hexagon $P_6$ triangulations. In \cref{fig:connectedcomponentsofhexagon}, we list all types of connected components of the coloured flip graph for $P_6$.
    
    \begin{figure}[h!]
    \includegraphics[width = 0.45\textwidth]{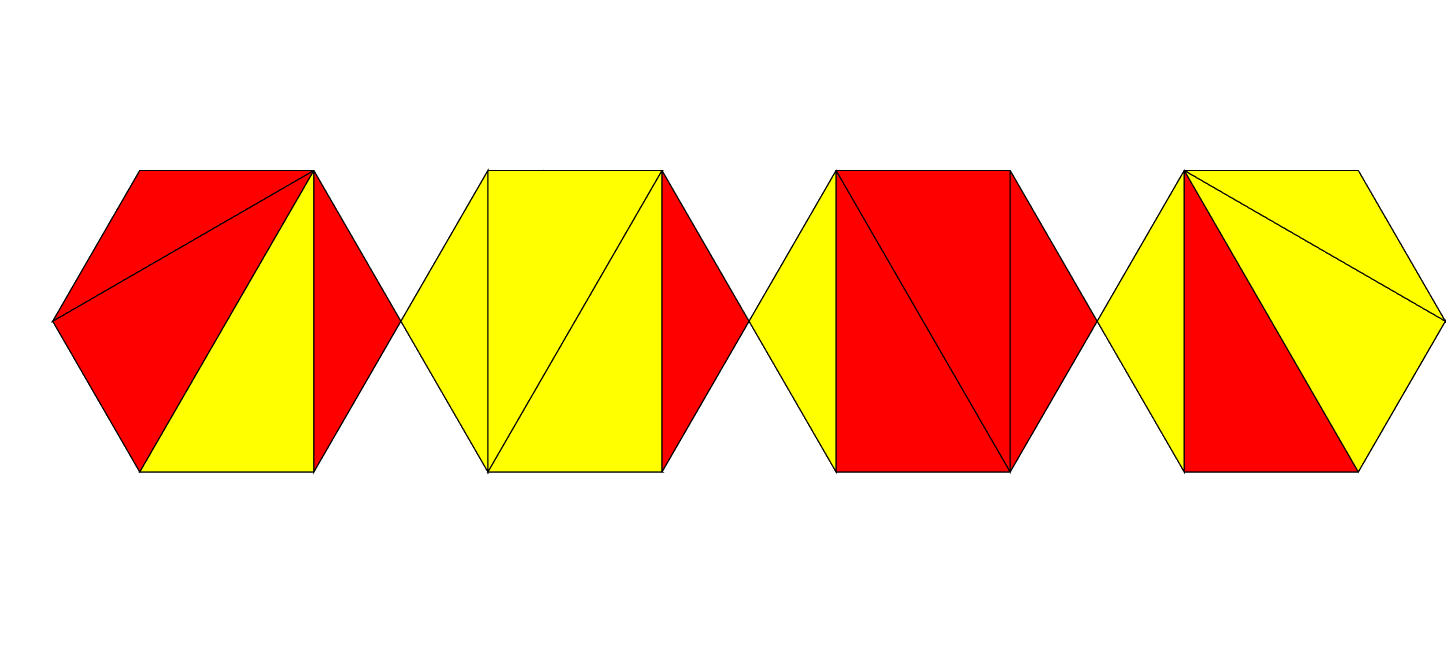}
    \includegraphics[width = 0.45\textwidth]{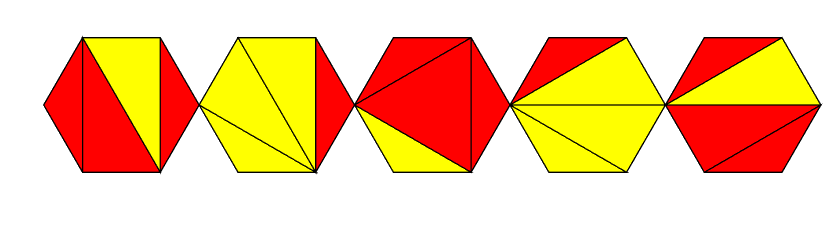}
    \includegraphics[width = 0.4\textwidth]{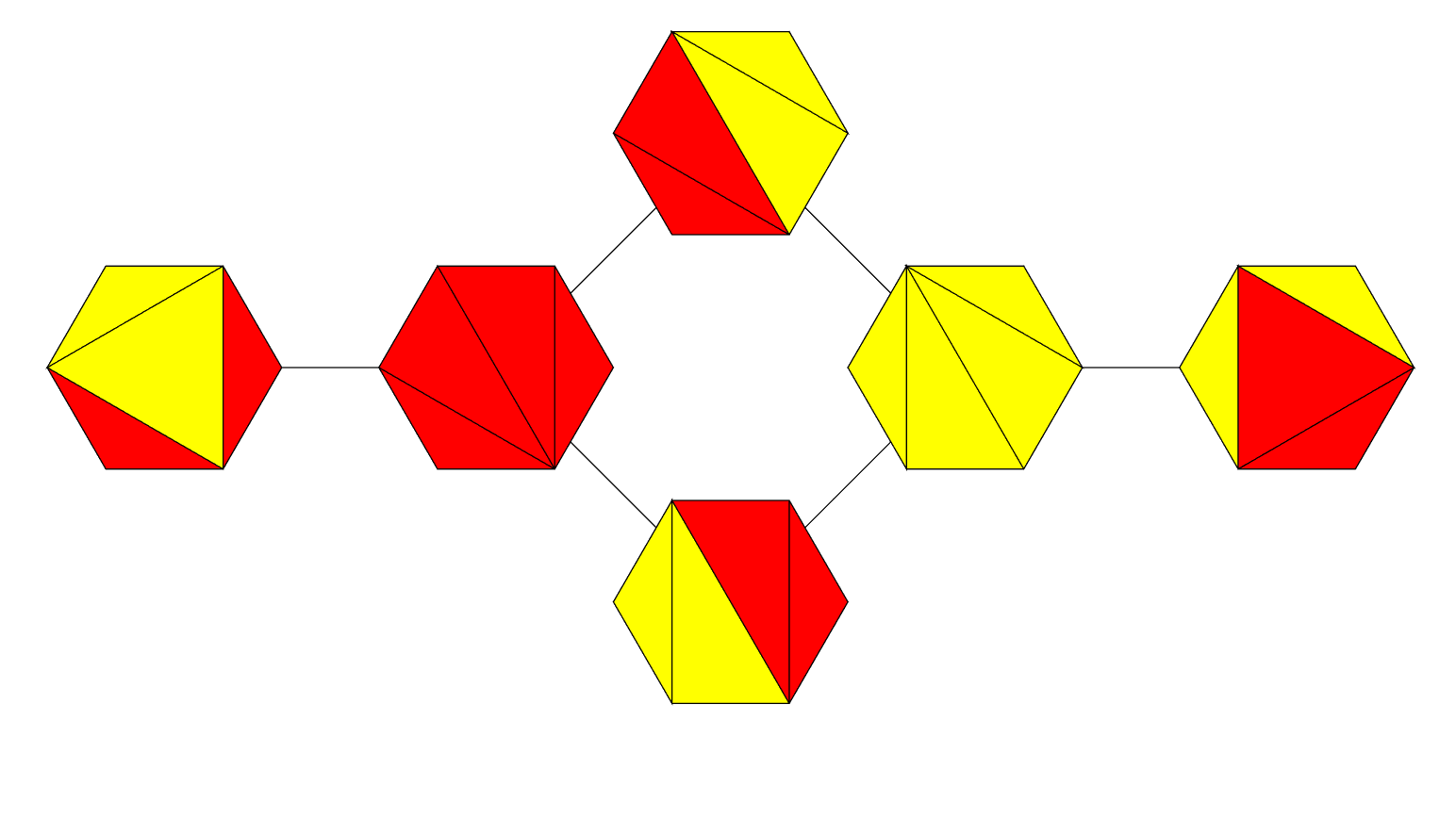}
    \includegraphics[width = .4\textwidth]{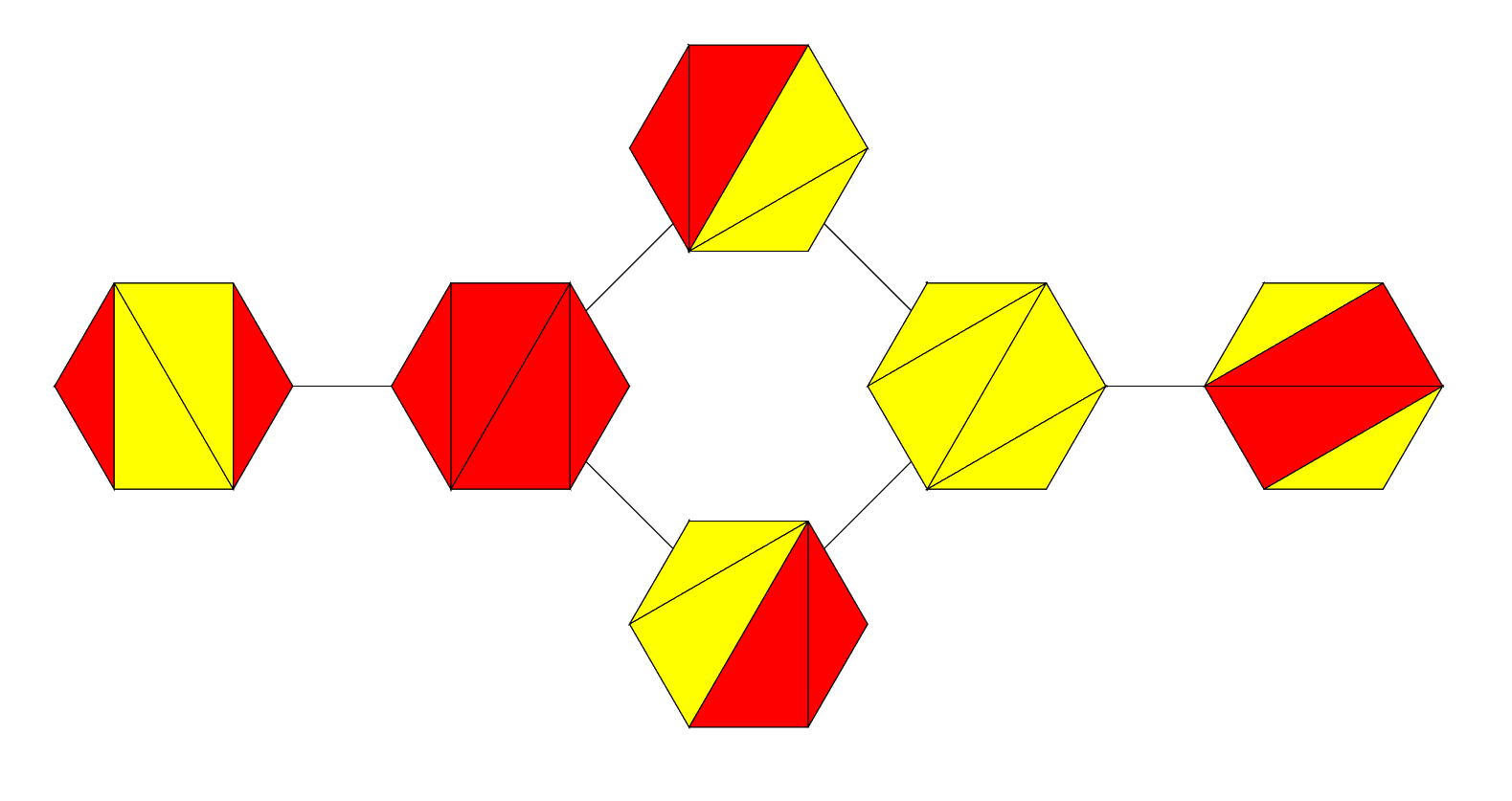}
    \includegraphics[width = .45\textwidth]{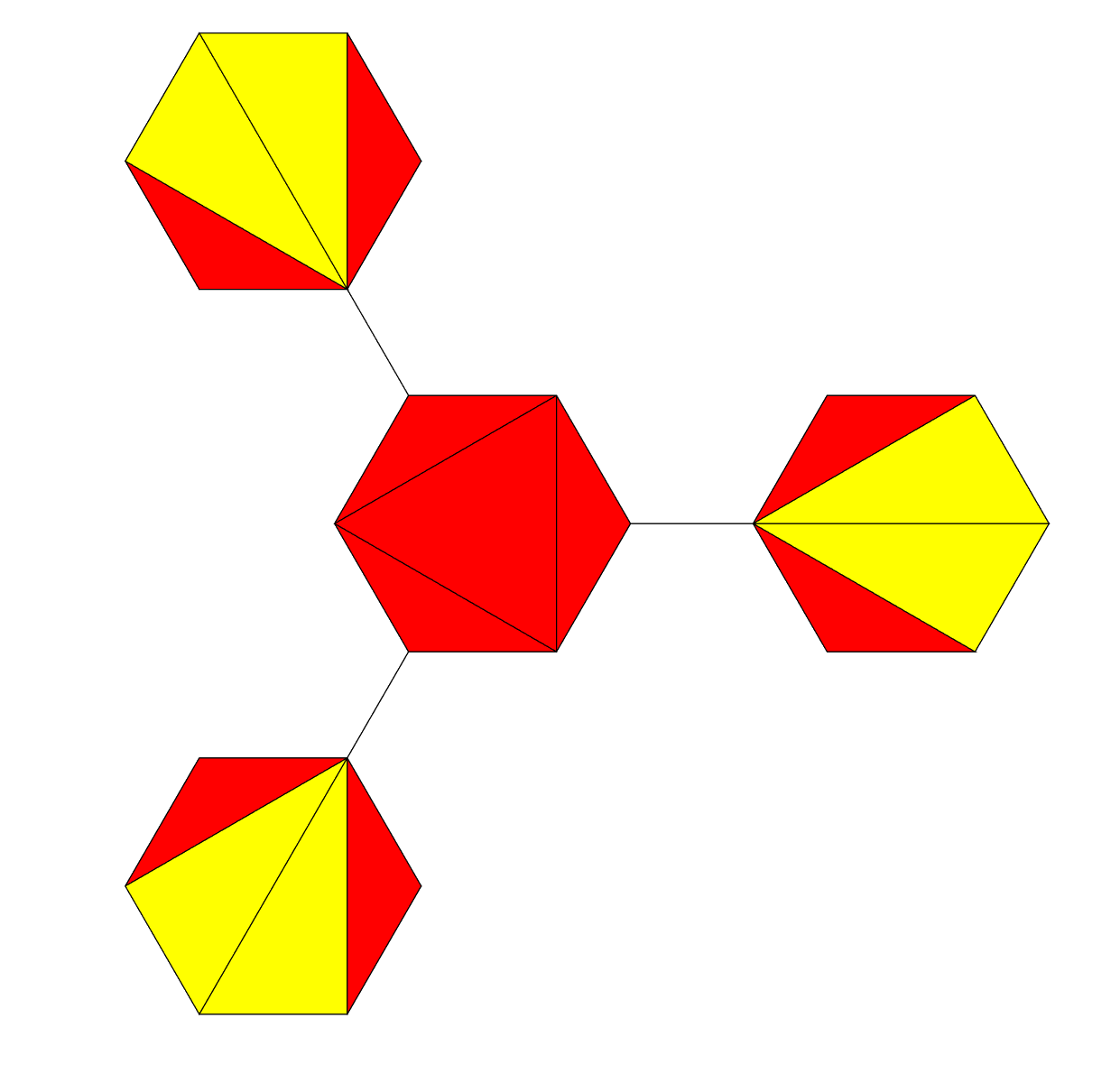}
    \caption{Connected components of the coloured flip graph of $P_6$ up to isometry, excluding the isolated points.}
    \label{fig:connectedcomponentsofhexagon}
\end{figure}

\end{exmp}

From now on, we restrict to $2$-coloured flips, i.e. to the case $m=2$. We will often choose $C=\{1,-1\}$ and indicate these colours by $+,-$ in the examples. We often use yellow and red as the two colours. 

\begin{defn}\label{def:flip-sequence}
For the following theorem, we need the notion of a flip sequence: let $T$ be a $2$-coloured triangulation of a convex polygon, let $s>0$. A {\em 2-coloured flip sequence} $\underline{\mu}=\mu_1\cdots \mu_s$ is a sequence of 2-coloured flips where for $i=s,s-1,\dots, 1$, there exists a diagonal in $\mu_{i-1}\mu_{i-2}\cdots\mu_s(T)$  which can be 2-colour flipped and where $\mu_i$ is a 2-coloured flip in the coloured triangulation $\mu_{i-1}\mu_{i-2}\cdots\mu_s(T)$. 
We denote the 2-coloured triangulation obtained through this sequence by $\underline{\mu}(T)$. 
\end{defn}

\begin{thm}\label{thm:component-size}
Let $G$ be the coloured flip graph of $P_{n+2}$. Then every connected component of $G$ is either an isolated point or is of size $\ge n$. 

Moreover, if $T$ is a triangulation in a non-trivial connected component and $t$ a diagonal of $T$, then either $t$ can be 2-coloured flipped or there exists a 2-coloured flip sequence 
$\underline{\mu}=\mu_1\cdots \mu_s$ (where $s\ge 1$) such that $t\in \underline{\mu}(T)$ and such that $t$ can be colour-flipped in $\underline{\mu}(T)$. 

\end{thm}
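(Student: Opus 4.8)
The plan is to prove the ``moreover'' statement first and then deduce the size bound $\lvert\mathcal{C}\rvert\ge n$ for a non-trivial component $\mathcal{C}$ as a consequence.

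\emph{Setting up the moreover statement.} Let $T$ lie in a non-trivial component and let $t$ be a diagonal of $T$; we may assume $t$ is \emph{not} $2$-coloured-flippable in $T$, so the two triangles $\Delta_1,\Delta_2$ of $T$ incident to $t$ carry the two different colours (there are only two). Since $T$ lies in a connected component with at least two vertices, it has degree $\ge 1$ in the flip graph, hence at least one $2$-coloured-flippable diagonal $d$, and necessarily $d\ne t$. Now $t$ cuts $P_{n+2}$ into two subpolygons $Q_1\supseteq\Delta_1$ and $Q_2\supseteq\Delta_2$, each having $t$ as a boundary edge, and every diagonal of $T$ other than $t$ lies entirely in one of them; so $d$ is a diagonal of $Q_1$ or of $Q_2$, say of $Q_2$ (the other case is symmetric, with the roles of $\Delta_1,\Delta_2$ exchanged). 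The idea is to perform flips using diagonals of $Q_2$ only — which therefore never flip $t$ — so as to switch the colour of the triangle incident to $t$ on the $Q_2$-side, so that it agrees with $\Delta_1$; then $t$ is present and $2$-coloured-flippable, which is what we want (with $s\ge1$, since a colour genuinely changes).

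\emph{The key lemma.} The technical core is: if $U$ is a $2$-coloured triangulation of a convex polygon $Q$ with at least one $2$-coloured-flippable diagonal, and $e$ is a boundary edge of $Q$, then some non-empty flip sequence using only diagonals of $Q$ switches the colour of the triangle incident to $e$. To prove it, pass to the dual graph of $U$, which is a tree $\tau$ (as recorded in the proof of \cref{lem:numberofcolouredtriangulations}); its vertices are the triangles, labelled by their colour, its edges are the diagonals of $Q$, and a diagonal is $2$-coloured-flippable exactly when the corresponding edge of $\tau$ is monochromatic. Root $\tau$ at the vertex $A$ incident to $e$, and choose a monochromatic edge $\epsilon=\{u,v\}$ of minimal depth (distance from $A$), with $u$ the endpoint nearer $A$. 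If $\epsilon$ is incident to $A$, one flip of $\epsilon$ recolours the triangle then incident to $e$ to the opposite colour, and we are done. Otherwise $u$ has a parent $p$; the edge $\{p,u\}$ is strictly shallower than $\epsilon$, hence not monochromatic, so $p$ has the colour opposite to $u$. Flipping $\epsilon$ re-triangulates the quadrilateral $u\cup v$ into two triangles, both coloured oppositely to $u$, exactly one of which still meets $p$ across the edge that joined $u$ to $p$; that triangle together with $p$ forms a monochromatic edge of depth one less. Since each such step strictly decreases the minimal depth of a monochromatic edge, after finitely many flips we reach the incident-to-$A$ case, and one last flip finishes. Applying this with $U=T|_{Q_2}$ and $e=t$ completes the proof of the moreover statement.

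\emph{Deducing the size bound.} Let $\mathcal{C}$ be a non-trivial component; then $n\ge 2$ (if $n\le 1$ there are no diagonals, so every component is a point). Fix $T_0\in\mathcal{C}$ with diagonals $t_1,\dots,t_{n-1}$. By the moreover statement, for each $i$ there is $W_i\in\mathcal{C}$ with $t_i\notin W_i$: either $t_i$ is flippable in $T_0$ and we flip it, or we first run the flip sequence provided by the moreover statement and then flip $t_i$. Fix a spanning tree of $\mathcal{C}$ rooted at $T_0$, and on the tree-path from $T_0$ to $W_i$ let $S_i$ be the first vertex not containing $t_i$. Its predecessor on that path is its parent in the spanning tree, and the edge joining them is a flip that removes $t_i$; since a single flip changes exactly one diagonal, this edge is ``the flip of $t_i$''. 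Hence the $S_i$ are pairwise distinct (if $S_i=S_j$ with $i\ne j$, the unique parent-edge of that vertex would be simultaneously the flip of $t_i$ and of $t_j$, impossible as $t_i\ne t_j$), and each $S_i\ne T_0$ since $t_i\in T_0\setminus S_i$. Thus $T_0,S_1,\dots,S_{n-1}$ are $n$ distinct triangulations in $\mathcal{C}$, so $\lvert\mathcal{C}\rvert\ge n$.

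\emph{Main obstacle.} The delicate point is the key lemma, specifically the bookkeeping for how a flip acts on the dual tree: one must verify which of the four sides of the flipped quadrilateral is inherited by which new triangle, in order to justify that after flipping $\epsilon$ the vertex $p$ is still joined to one of the new (oppositely coloured) triangles, that the resulting monochromatic edge is genuinely one level shallower, and that nothing shallower is created, so that the induction on minimal depth terminates. Once this lemma is in hand, the cutting argument for the moreover statement and the spanning-tree counting for the size bound are routine.
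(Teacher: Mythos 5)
Your proof is correct, and while it rests on the same underlying mechanism as the paper's argument --- flippability propagates from a flippable diagonal to a neighbouring one through the dual tree --- the route is genuinely different. The paper argues by contradiction: it marks each diagonal blue (eventually flippable) or red (never flippable), uses connectedness of the dual graph to find a triangle carrying one diagonal of each kind, and observes that a sequence realising the blue flip must at some intermediate moment equalise the two colours at the red diagonal; it then asserts the bound $\ge n$ without further comment. Your argument is direct and constructive: cutting along the offending diagonal $t$ and working only in the half $Q_2$ containing a flippable diagonal automatically guarantees that $t$ is never flipped and that $\Delta_1$ keeps its colour, and the induction on the depth (in the dual tree rooted at the triangle on $t$) of the nearest monochromatic edge produces the flip sequence explicitly. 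The bookkeeping you flag as the main obstacle does go through: the side joining $u$ to its parent $p$ is one of the four sides of the flipped quadrilateral, hence is inherited by exactly one of the two new, oppositely coloured triangles; since $p$ and $u$ have different colours and there are only two colours, $p$ matches the new triangle; and since the tree path from the root to $p$ avoids both $u$ and $v$, its length is unaffected by the flip, so a monochromatic edge appears at depth exactly one less. (You do not actually need that nothing shallower is created --- the minimal depth being bounded above by one less than before already forces termination.) You also supply what the paper omits entirely, namely a proof that eventual flippability of all $n-1$ diagonals yields $n$ distinct vertices; the spanning-tree argument, resting on the fact that a single flip removes exactly one diagonal, is correct. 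The only point worth making explicit is that an interior diagonal of $Q_2$ has the same two incident triangles whether viewed in $Q_2$ or in $P_{n+2}$, so your $Q_2$-flips really are flips of the ambient coloured triangulation; this is immediate since no diagonal of $T$ other than $t$ crosses the cut.
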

\begin{proof} 
Let $T$ be a triangulation of $P_{n+2}$, with a colouring. 
 We assume that there is at least one diagonal which can be 2-colour-flipped.  
We mark the diagonals of $T$ with a blue or a red dot: A diagonal $t$ is marked blue if it can be flipped after some (possibly empty, if it can be flipped immediately) sequence of coloured flips. Diagonals of $T$ which can never be flipped are marked with a red point. 
By assumption, at least one diagonal is marked with a blue dot. 
If there exists a diagonal with a red dot, find a triangle with with a red and a blue dot on two of its diagonals. Such a triangle always exists (see~\cref{rem:why-red-blue}). 
Call these two diagonals $B$ and $R$. The two triangles incident with $R$ must be of different colours, since otherwise, we can flip it immediately. 
We now execute a (coloured) flip sequence in order to flip the edge $B$. At some point in this sequence (or at the end of the sequence), the colour of one of the triangles incident with edge $R$ has changed colour, and at this point, the edge $R$ can be flipped. 

This is a contradiction, hence there can be no red dots and every edge can be flipped eventually. Hence there are at least $n$ vertices in this connected component of the coloured  flip graph. 
\end{proof} 

\begin{remark}\label{rem:why-red-blue}
    Consider a triangulation $T$ where at least one diagonal can be 2-colour flipped. We mark the diagonals of $T$ by a blue dot if the diagonal can eventually be colour-flipped and with a red dot otherwise (as in the proof of \cref{thm:component-size}). Then there exists a triangle with a red and blue dot on two of its sides: We draw the dual graph $G_T$ of the triangulation (as in the proof of Lemma~\ref{lem:numberofcolouredtriangulations}): 
    it has vertices for the diagonals of $T$ and edges between vertices $t,t'$ whenever there is a triangle containing $t$ and $t'$. We equip this graph with the two colours. Since the graph $G_T$ is connected, there has to be an edge between a red and a blue node.  

    \vskip.2cm


\end{remark}


The following result shows that connected components of size $n$ do exist in the coloured flip graph of $P_{n+2}$. 

\begin{exmp}\label{ex:conn-comp-fan}
Consider a fan triangulation $T$, with alternatingly coloured triangles apart from at one end. See \cref{fig:fan-line} for an illustration of such a fan triangulation of a decagon. 
Then the connected component containing $T$ is a single line with $n$ vertices: in every step, only a single 2-coloured flip can be made. Under this, the two triangles of the same colour move from one side of the fan to the other end of the fan. 
\end{exmp}
\begin{figure}[h!]
    \centering
    \includegraphics[width = .28\textwidth]{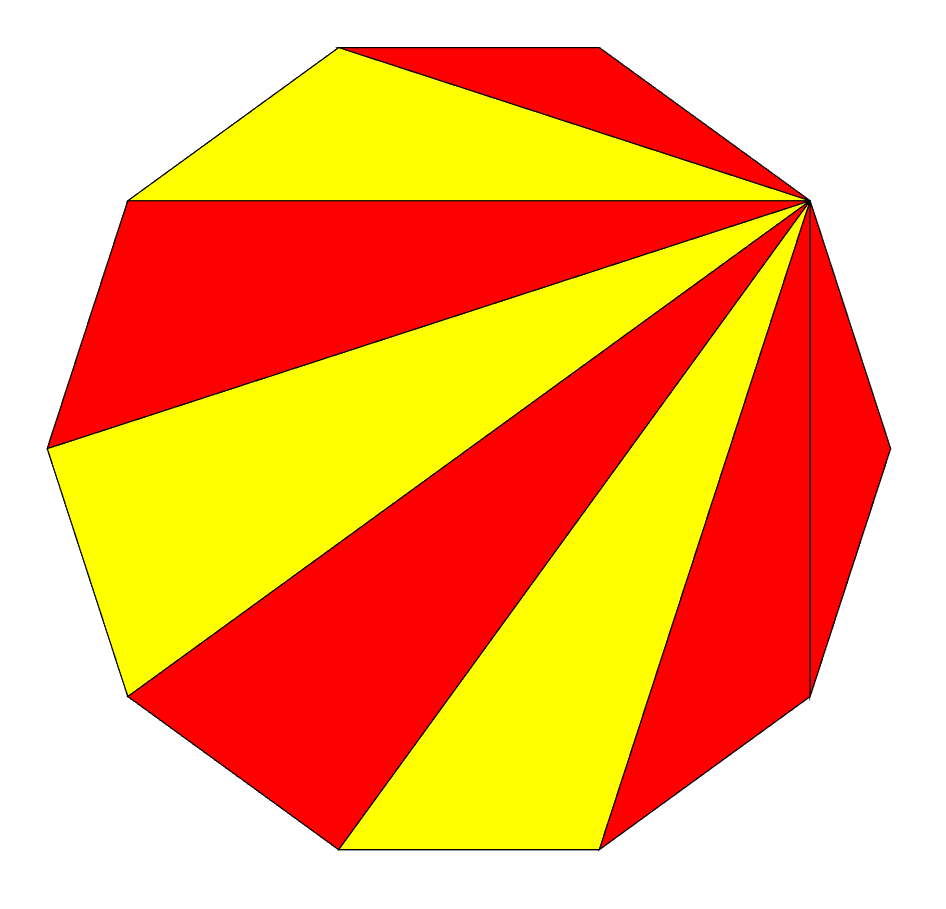}
    \caption{A coloured fan triangulation of a decagon.}\label{fig:fan-10gon}
\end{figure}

We recall the notion of a weighting on the vertices of a triangulated polygon with a $2$-colouring from~\cite{GRAVIER2002817}. 

If we have a coloured triangulation and if $\triangle$ is a triangle of $T$, we denote its colour by 
$s(\triangle)$. 
Recall that 
$s(\triangle)\in\{-1,+1\}$. 

\begin{defn}\label{def:weighting}
A {\em weighting} of the polygon $P$ is given by a choice of a triangulation $T$ and a function 
$p$ assigning to each vertex of $P$ an element of $\{-1,0,1\}$ such that there is a $2$-colouring of $T$ where for every vertex $x$ of $P$, we have 
$p(x)=\sum_{x \in\triangle} s(\triangle) \mod 3$ (the sum is taken over all triangles incident with the vertex $x$). 
\end{defn}


Two weightings of coloured triangulated quadrilaterals are shown in~\cref{fig:exampleofquadvaluation}. 

\begin{lem}
    Any two $2$-coloured triangulations which are in the same connected component of the flip graph have the same weighting. 
\end{lem}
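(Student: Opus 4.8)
The plan is to show that the weighting (in the sense of Definition~\ref{def:weighting}) is invariant under a single 2-coloured flip; the general statement then follows by induction along any flip sequence connecting two triangulations in the same connected component. So fix a 2-coloured triangulation $T$ with colouring $s$, let $t$ be a diagonal incident with two triangles of the same colour $i$, and let $T'$ be the triangulation with colouring $s'$ obtained by the 2-coloured flip at $t$. The flip is a local move: outside the quadrilateral $Q$ formed by the two triangles meeting along $t$, the triangulation and all colours are unchanged. Hence for any vertex $x$ of $P$ not among the four vertices of $Q$, the sum $\sum_{x\in\triangle}s(\triangle)$ is literally the same for $T$ and $T'$, and $p(x)$ is unaffected.

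The heart of the matter is therefore a purely local computation on the quadrilateral $Q$ with vertices, say, $a,b,c,d$ in cyclic order, where before the flip the two triangles are $abc$ and $acd$ (sharing the diagonal $ac=t$), and after the flip they are $abd$ and $bcd$ (sharing $bd$). Both triangles of $Q$ have colour $i\in\{-1,+1\}$ before the flip, and by definition of the 2-coloured flip both acquire the colour $\sigma(i)$ afterwards; since $m=2$ and $\sigma=(1,2)$, we have $\{i,\sigma(i)\}=\{-1,+1\}$, so in fact the flip replaces the common colour $i$ by $-i$ on both triangles. I would now check the four vertices of $Q$ one at a time: vertex $b$ lies in exactly one triangle of $Q$ before ($abc$, contributing $i$) and exactly one after ($abd$, contributing $-i$); similarly $d$ lies in $acd$ before ($i$) and $bcd$ after ($-i$). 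Vertex $a$ lies in both triangles of $Q$ before (contributing $2i$) and in exactly one after, $abd$ (contributing $-i$); and $c$ lies in both before ($2i$) and in exactly one after, $bcd$ ($-i$). Thus at $b$ and $d$ the local contribution changes by $-i-i=-2i\equiv i\pmod 3$ (using $-2\equiv1$), while at $a$ and $c$ it changes by $-i-2i=-3i\equiv 0\pmod 3$. So a naive "sum of colours" is \emph{not} literally preserved at $b$ and $d$.

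The resolution is that the weighting is not attached to a single triangulation: Definition~\ref{def:weighting} says a weighting is a pair $(T,p)$ such that $p$ arises from \emph{some} $2$-colouring of $T$; what the lemma asserts is that the function $p$ on the vertices of $P$ is the same. So the remaining step is to exhibit, after the flip, the right auxiliary colouring. Concretely, I would argue as follows. Because $m=2$, the colour of a triangle lies in $\{-1,+1\}$, and the condition $p(x)\equiv\sum_{x\in\triangle}s(\triangle)\pmod 3$ only constrains $p$ modulo $3$; I claim that changing the common colour of the two triangles of $Q$ from $i$ to $-i$ leaves every vertex sum unchanged modulo $3$ once one also re-reads it correctly. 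Indeed, for a vertex incident to \emph{both} triangles of $Q$ (vertices $a,c$ before the flip) the contribution $2i$ versus the post-flip contribution $-i$ satisfy $2i\equiv -i\pmod 3$, so those are automatically equal; for a vertex incident to exactly one triangle of $Q$ before and one after (vertices $b,d$), I must use that the flip is only ever \emph{executed} when the two triangles share a colour, and track that the same parity/colour bookkeeping that makes Gravier--Payan's weighting well defined forces $p(b),p(d)$ to be computed from the post-flip colouring with the new colour $-i$; writing it out, $-i\equiv 2i\pmod 3$ again, so equality holds. I would present this as: set $s'(\triangle)=s(\triangle)$ for $\triangle\notin\{$triangles of $Q\}$ and $s'=-i$ on the two triangles of $Q$ after the flip, verify vertex by vertex that $\sum_{x\in\triangle}s'(\triangle)\equiv p(x)\pmod 3$ using $-i\equiv 2i\pmod3$, conclude $(T',p)$ is a valid weighting with the \emph{same} $p$, and then iterate over the flip sequence.

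The step I expect to be the main obstacle is precisely this last local verification: making sure the modular arithmetic at the vertices $b,d$ genuinely closes up and that I have correctly matched the convention of Definition~\ref{def:weighting} (in particular whether "$p(x)=\sum s(\triangle)\bmod 3$" is read with values in $\{-1,0,1\}$ and the $\bmod 3$ reduction is consistent with $-1\equiv 2$). If the bookkeeping does not close up for a single flip, the fallback is to prove it for the composite of $m=2$ applications on a fixed quadrilateral (which returns to colour $i$) together with a parity argument, mirroring the "$m$ flips restore the colour" trick used in the proof of Lemma~\ref{lm:one-cycle} and the following lemma; but I expect the single-flip argument to work directly.
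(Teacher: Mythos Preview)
Your approach is the same as the paper's --- a local computation on the quadrilateral --- but you made a bookkeeping error that sent you down an unnecessary detour. You write that vertex $b$ ``lies in exactly one triangle of $Q$ before ($abc$, contributing $i$) and exactly one after ($abd$, contributing $-i$)''. This is false: after the flip the two triangles of $Q$ are $abd$ and $bcd$, and $b$ belongs to \emph{both}. So $b$ goes from one triangle of $Q$ (contribution $i$) to two (contribution $-2i$), and the change is $-2i - i = -3i \equiv 0 \pmod 3$. The same holds for $d$ by symmetry. Combined with your (correct) computation at $a$ and $c$, every vertex of $Q$ sees its sum change by $\pm 3i \equiv 0 \pmod 3$, and the lemma follows immediately. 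This is exactly the paper's one-line argument: at each vertex of the quadrilateral the flip replaces two triangles of colour $i$ by one of colour $-i$, or one of colour $i$ by two of colour $-i$, and $2i \equiv -i \pmod 3$.

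Your ``resolution'' paragraph is therefore unnecessary and, as written, does not make sense: if $b$ really did sit in one triangle before (contribution $i$) and one after (contribution $-i$), no appeal to $-i \equiv 2i \pmod 3$ would rescue the equality, since $i \not\equiv -i \pmod 3$ for $i = \pm 1$. Your fallback plan (compose two flips on the same quadrilateral) also cannot help: with $m=2$ one has $\sigma^2 = \mathrm{id}$, so two flips on the same diagonal return exactly to the starting coloured triangulation and give no information about invariance along a nontrivial flip sequence. Fix the count at $b$ and $d$ and the whole second half of your write-up can be deleted.
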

\begin{proof}
    Let $p$ be a weighting of a triangulated polygon with 2-colouring. If $x$ is a vertex of the quadrilateral where the flip happens, the flip either changes two triangles with $+1$ to one triangle with $-1$ (or vice versa) or two triangles with $-1$ to one triangle with $+1$ (or vice versa). In all cases, $p(x)$ remains the same. (See~\cref{fig:exampleofquadvaluation}). 
\end{proof}

However, there exist 2-colourings which are not flip equivalent but have the same weighting, see Figure~\ref{fig:weights-not-flip} for an example.

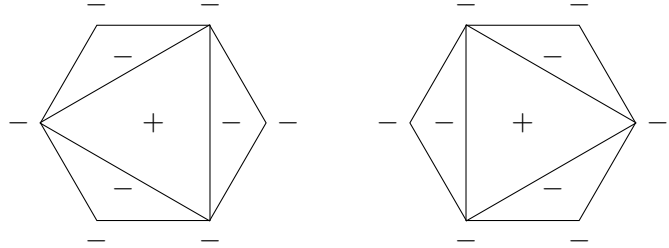
\begin{figure}[h!]
    \centering
        \begin{tikzpicture}[scale=.8]
            \node[draw, regular polygon, regular polygon sides=6, minimum size=3cm] (a) {};
            \draw (a.corner 1) edge (a.corner 3);
            \draw (a.corner 3) edge (a.corner 5);
            \draw (a.corner 1) edge (a.corner 5);
            \node[above] at (a.corner 1) {$-$};
            \node[above] at (a.corner 2) {$-$};
            \node[left] at (a.corner 3) {$-$};
            \node[below] at (a.corner 4) {$-$};
            \node[below] at (a.corner 5) {$-$};
            \node[right] at (a.corner 6) {$-$};
            \node at (0,0) {$+$};
            \node at (-.5,1.1) {$-$};
            \node at (1.3,0) {$-$};
            \node at (-.5,-1.1) {$-$};
        \end{tikzpicture}\qquad
        \begin{tikzpicture}[scale=.8]
            \node[draw, regular polygon, regular polygon sides=6, minimum size=3cm] (a) {};
            \draw (a.corner 2) edge (a.corner 4);
            \draw (a.corner 4) edge (a.corner 6);
            \draw (a.corner 2) edge (a.corner 6);
            \node[above] at (a.corner 1) {$-$};
            \node[above] at (a.corner 2) {$-$};
            \node[left] at (a.corner 3) {$-$};
            \node[below] at (a.corner 4) {$-$};
            \node[below] at (a.corner 5) {$-$};
            \node[right] at (a.corner 6) {$-$};
            \node at (0,0) {$+$};
            \node at (.5,1.1) {$-$};
            \node at (-1.3,0) {$-$};
            \node at (.5,-1.1) {$-$};
        \end{tikzpicture}
    \caption{Two triangulations which are not flip equivalent but have the same weighting}
    \label{fig:weights-not-flip}
\end{figure}

The following statement is mentioned in~\cite{GRAVIER2002817}. We include a proof for completeness. 

\begin{thm} 
    Given a weighting of a triangulation, there is at most one way to colour it to match the weighting. 
\end{thm}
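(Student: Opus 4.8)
The plan is to induct on the number $n$ of triangles of the triangulation $T$ (equivalently, on the number of vertices of $P$), removing one ear at a time. So I would fix a weighting, given by $T$ together with a function $p$, and suppose $s$ and $s'$ are two $2$-colourings of $T$ that both realise it, meaning $p(x)\equiv\sum_{x\in\triangle}s(\triangle)\equiv\sum_{x\in\triangle}s'(\triangle)\pmod 3$ for every vertex $x$; the goal is $s=s'$. For the base case $n=1$, the triangulation is a single triangle $\triangle$ and $p(x)\equiv s(\triangle)\pmod 3$ for each vertex $x$; since $s(\triangle),s'(\triangle)\in\{-1,+1\}$ and $+1\not\equiv-1\pmod 3$, the value $p(x)$ pins down $s(\triangle)$, so $s(\triangle)=s'(\triangle)$.

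For the inductive step ($n\ge 2$) I would use the dual tree on the triangles of $T$, as in the proof of \cref{lem:numberofcolouredtriangulations}(ii): it has at least two leaves, and each leaf is an ear $\triangle_0$, a triangle two of whose edges lie on the boundary of $P$. Those two boundary edges meet at a vertex $x_0$, the tip of the ear, which is incident with no triangle of $T$ other than $\triangle_0$; hence $p(x_0)\equiv s(\triangle_0)\pmod 3$, and, just as in the base case, this forces $s(\triangle_0)=s'(\triangle_0)$. Next I would delete $x_0$ to obtain a polygon $P^-$ with one vertex fewer, in which the third edge of $\triangle_0$ (previously a diagonal, opposite $x_0$; call its endpoints $y$ and $z$) becomes a boundary edge, and set $T^-=T\setminus\{\triangle_0\}$, a triangulation of $P^-$. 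The weighting has to be adjusted at the two vertices $y,z$ where a triangle was removed: define $p^-(v)=p(v)$ for $v\notin\{y,z\}$ and $p^-(v)\equiv p(v)-s(\triangle_0)\pmod 3$ (with representative in $\{-1,0,1\}$) for $v\in\{y,z\}$. Then for every vertex $v$ of $P^-$ one checks $p^-(v)\equiv\sum_{v\in\triangle\in T^-}s(\triangle)\pmod 3$, so $(T^-,p^-)$ is a weighting realised by $s|_{T^-}$, and also by $s'|_{T^-}$ since $s'(\triangle_0)=s(\triangle_0)$. The induction hypothesis then gives $s|_{T^-}=s'|_{T^-}$, and combined with $s(\triangle_0)=s'(\triangle_0)$ this yields $s=s'$.

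I do not expect a real obstacle here; the argument is essentially bookkeeping, and the two points that need care are: (a) verifying that $p^-$ is a genuine weighting — that its values lie in $\{-1,0,1\}$ and that it is realised by the restricted colouring, both of which fall out of the displayed congruence; and (b) the elementary fact that $+1$ and $-1$ are distinct modulo $3$, which is precisely what lets the weight at an ear tip determine the colour of that ear, and is also where one uses that colours are drawn from $\{-1,+1\}$ rather than, say, $\{0,1\}$. If one prefers not to single out an ear, an equivalent route is to consider $D=\{\triangle:s(\triangle)\neq s'(\triangle)\}$, note that $\sum_{x\in\triangle\in D}\bigl(s(\triangle)-s'(\triangle)\bigr)\equiv 0\pmod 3$ at each vertex $x$, and run the same ear-peeling induction to conclude $D=\varnothing$.
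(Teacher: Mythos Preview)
Your proof is correct and follows essentially the same ear-peeling strategy as the paper: locate a triangle with two boundary edges (equivalently, a degree-$2$ vertex), observe that the weight at its tip determines its colour since $+1\not\equiv -1\pmod 3$, then remove it, adjust the weighting on the two exposed vertices, and recurse. Your write-up is more explicit about the induction and the verification that $p^-$ is again a weighting, but the underlying argument is the same.
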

\begin{proof}
    Since any triangulation must contain a triangle with 2 of it's sides being sides of the polygon, there is a vertex which is only contained in one triangle. At this vertex, if the value is zero then there is no such colouring, and if it is $1$ or $2$ then this determines the colour of the triangle. Consider removing this triangle, and subtracting off the value it contributes to the neighbouring triangles to give a valuation and triangulation for a $(n-1)$-gon, repeat until we either reach a contradiction or have completely coloured the shape. Hence if the colouring exists, it must be unique. (see \cref{fig:exampleofquadvaluation})
\end{proof}

\begin{figure}[h!]
    \centering
    \begin{tikzpicture}[baseline]
        \node[draw, regular polygon, regular polygon sides=4, minimum size=3cm] (a) {};
        \foreach \i in {1,2,3,4}
            \node[circle, draw, fill=white, fill opacity=1, scale=0.3] at (a.corner \i) {};
        \draw (a.corner 1) edge (a.corner 3);
        \node[above] at (a.corner 1) {$-$};
        \node[above] at (a.corner 2) {$+$};
        \node[below] at (a.corner 3) {$-$};
        \node[below] at (a.corner 4) {$+$};
        \node at (-.5,.5) {+};
        \node at (.5,-.5) {+};
    \end{tikzpicture}\qquad
    \begin{tikzpicture}
        \draw [->, thick] (2,6) -- (4,6) node [pos=0.5,above,font=\footnotesize] {flip};
    \end{tikzpicture}\qquad
    \begin{tikzpicture}[baseline]
        \node[draw, regular polygon, regular polygon sides=4, minimum size=3cm] (a) {};
        \foreach \i in {1,2,3,4}
            \node[circle, draw, fill=white, fill opacity=1, scale=0.3] at (a.corner \i) {};
        \draw (a.corner 2) edge (a.corner 4);
        \node[above] at (a.corner 1) {$-$};
        \node[above] at (a.corner 2) {$+$};
        \node[below] at (a.corner 3) {$-$};
        \node[below] at (a.corner 4) {$+$};
        \node at (-.5,-.5) {-};
        \node at (.5,.5) {-};
    \end{tikzpicture}
    \caption{Example of quadrilateral valuation}
    \label{fig:exampleofquadvaluation}
\end{figure}
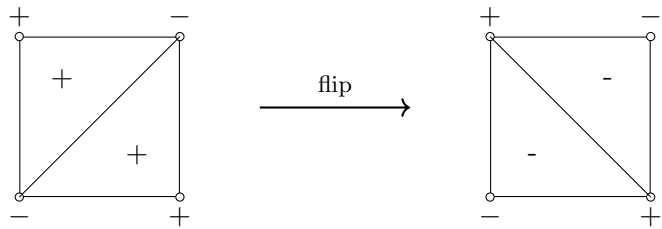

\begin{thm}\label{thm:cycles-are-even}
    Any cycle in the coloured flip graph is even. 
\end{thm}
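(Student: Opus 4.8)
The plan is to exhibit a $\mathbb{Z}/2$-valued quantity attached to each $2$-coloured triangulation that is toggled by every single $2$-coloured flip; this forces every closed walk in the coloured flip graph --- in particular every cycle --- to have even length.

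Concretely, work with the colours $C=\{+1,-1\}$ and, for a $2$-coloured triangulation $T$ of $P_{n+2}$, let $n_+(T)$ denote the number of triangles of $T$ coloured $+1$ (so $n_+(T)+n_-(T)=n$). The first step is to analyse the effect of one flip. A $2$-coloured flip acts at a diagonal whose two incident triangles share a colour $i$, re-triangulates the quadrilateral they span (which still consists of exactly two triangles), and recolours both of the new triangles to the other colour $\sigma(i)\neq i$; no other triangle of $T$ changes colour. Hence $n_+$ changes by exactly $+2$ (when $i=-1$) or by exactly $-2$ (when $i=+1$).

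The second step packages this as a bipartition. Since each flip changes $n_+$ by $\pm 2$, along any walk in the flip graph the residue $n_+(T)\bmod 4$ stays in a fixed pair $\{r,r+2\}$, and each edge switches between the two values --- equivalently, every edge flips the parity of $\lfloor n_+(T)/2\rfloor$. Thus, restricting to any connected component of $G$ and two-colouring its vertices according to the value of $n_+(T)\bmod 4$ exhibits that component as a bipartite graph. A bipartite graph has no odd cycles, and any cycle of $G$ is contained in a single connected component, so every cycle of $G$ is even.

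There is essentially no obstacle here beyond identifying the right invariant; the only point needing (minor) care is the verification in the first step that a flip recolours precisely two triangles and does so by switching $+1\leftrightarrow -1$, which is immediate from the definition of a $2$-coloured flip. One could equally track the signed sum $\sum_{\triangle\in F(T)} s(\triangle)=2n_+(T)-n$, which a flip changes by $\pm 4$, modulo $8$; this is the same invariant in different clothing, and is compatible with the weighting $p$ of Definition~\ref{def:weighting}.
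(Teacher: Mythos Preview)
Your proof is correct and is essentially the same as the paper's: both track the number of $+$-coloured triangles, observe it changes by exactly $\pm 2$ under each $2$-coloured flip, and conclude that any closed walk must have even length. Your bipartition via $n_+\bmod 4$ is just a slightly more explicit repackaging of the paper's parity argument.
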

\begin{proof}
Let $T$ be a 2-coloured triangulation, let $X$ be the number of triangles marked $+$ in this triangulation. After every flip, $X$ either increases or decreases by 2, hence every flip changes $X$ by $\pm 2$. 
Therefore, if we reach $T$ again after a sequence of coloured flips, this sequence has to have even length, since the number of triangles marked with a $+$ will be equal to $X$ again. 

\end{proof}

\subsection{Structure of the flip graph}

In this section, we show properties of the flip graph. In particular, we prove the existence of hypercubes in the flip graph. 

\begin{defn}\label{def:independent-flips}
    Let $t,t'$ be two diagonals in a triangulation of a convex polygon. If two quadrilaterals share at most a diagonal, we say that the quadrilaterals are {\em disjoint}. In this case, we say that the flips of $t$ and of $t'$ are {\em independent}. 
\end{defn}

\begin{exmp}\label{ex:size-fan}
    Let $T$ be a fan triangulation of $P_{n+2}$ where each face is assigned the same colour (see Figure~\ref{fig:fan-10gon} for an example). 
    

    Notice that we have $n$ triangles in $T$ for a $(n+2)$-gon. To maximise $k$, we start by choosing the first two triangles on the left in $T$. Then we keep choosing the quadrilateral close to the previous one sharing a common boundary. We end up with either all triangles been chosen or having one left. Hence, $\lfloor \frac{n}{2} \rfloor$ is the maximum.
        

\end{exmp}

\begin{figure}[h!]
    \centering
    \includegraphics[width =.28\textwidth]{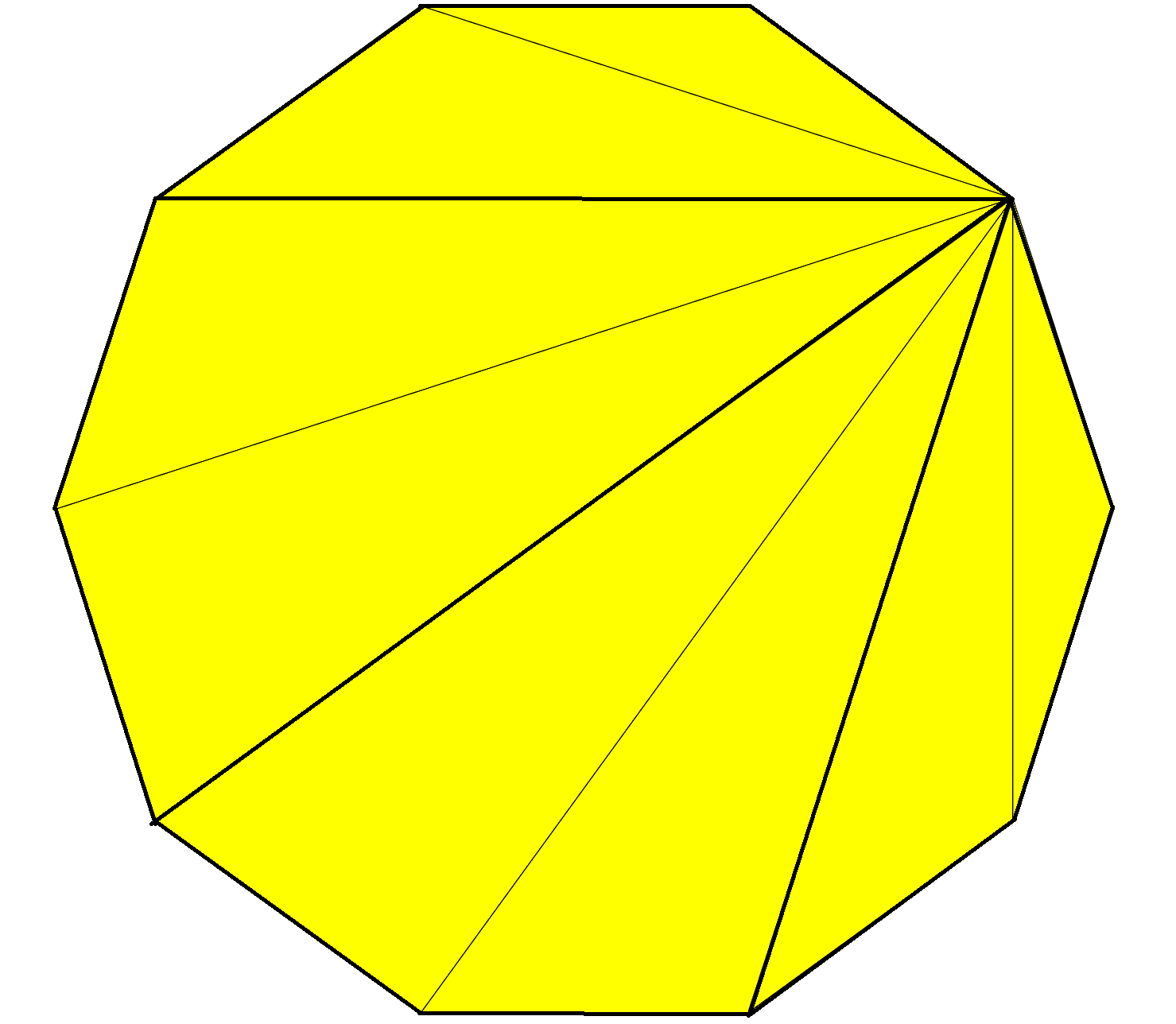}
    \caption{A flip graph of the monocolour $n+2$-gon, $n \geq 8$ contains 4 non-overlapping quadrilaterals, and hence a 4-dimensional hypercube subgraph.}\label{fig:fan-line}
\end{figure}

\begin{prop}
\label{prop:hypercubesinflipgraphsareindependentflips}
Let $T$ be a 2-coloured triangulation of a convex polygon. 
Let $G$ be the connected component of the flip graph containing $T$. 
Assume that there are $k>1$ diagonals in $T$ which can be 2-coloured flipped and whose  quadrilaterals are pairwise disjoint. 
Then $G$ contains a $k$-dimensional hypercube. 

\end{prop}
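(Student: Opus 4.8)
The plan is to show that the $k$ pairwise-disjoint flippable quadrilaterals generate a $k$-dimensional hypercube by flipping arbitrary subsets of them. Label the $k$ diagonals $t_1,\dots,t_k$, and for each $i$ let $\mu_i$ denote the $2$-coloured flip at $t_i$ (performed in $T$). The key observation is that independence of flips, as in \cref{def:independent-flips}, means the quadrilaterals overlap in at most a shared diagonal, hence the two triangles recoloured by $\mu_i$ are \emph{disjoint} from the two triangles recoloured by $\mu_j$ for $i\ne j$; in particular $\mu_i$ does not change the colours of the two triangles of the quadrilateral of $t_j$, nor whether $t_j$ is present, nor whether it is monochromatic. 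Therefore for any subset $S\subseteq\{1,\dots,k\}$ the composite flip $\mu_S := \prod_{i\in S}\mu_i$ is well-defined (independent of the order in which we perform the flips in $S$) and applicable to $T$: at each stage the next diagonal $t_i$ with $i\in S$ still bounds two equally-coloured triangles, exactly as it did in $T$.

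First I would make precise the claim that $\mu_i$ and $\mu_j$ commute and that each $t_j$ ($j\notin S$) remains flippable after applying $\mu_S$: this is immediate from disjointness of the quadrilaterals, since a flip only alters the triangulation and colouring inside its own quadrilateral. Next I would define the map $\Phi\colon \{0,1\}^k \to V(G)$ by $\Phi(\chi) = \mu_{S(\chi)}(T)$, where $S(\chi)=\{i : \chi_i=1\}$; all $2^k$ of these triangulations lie in $G$ since each is reached from $T$ by a sequence of $2$-coloured flips. Then I would check that $\Phi$ is injective: the triangulations $\mu_{S}(T)$ and $\mu_{S'}(T)$ for $S\ne S'$ differ, because on the symmetric difference of $S$ and $S'$ at least one quadrilateral carries a different diagonal (the two diagonals of a quadrilateral are distinct, and the quadrilaterals are disjoint so there is no interference). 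Finally I would verify that $\Phi$ is a graph homomorphism onto the $k$-cube: $\Phi(\chi)$ and $\Phi(\chi')$ are joined by a single $2$-coloured flip precisely when $\chi,\chi'$ differ in one coordinate $i$, namely the flip $\mu_i$ applied inside that quadrilateral (which is still monochromatic-bounded in $\Phi(\chi)$ by the independence argument). Hence $\Phi$ embeds the $k$-dimensional hypercube $Q_k$ as a subgraph of $G$.

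The main obstacle — really the only non-formal point — is confirming that independence of the quadrilaterals genuinely guarantees that flipping one of them never destroys the flippability of another, even after several flips have been performed: one must be sure that "disjoint quadrilaterals" rules out, for instance, a flipped diagonal $t_i$ creating a new quadrilateral that swallows $t_j$ or recolours one of its triangles. Since \cref{def:independent-flips} asks the quadrilaterals to share at most a diagonal, and a flip of $t_i$ changes neither the boundary edges of the $t_i$-quadrilateral nor anything outside it, the two triangles bordering $t_j$ are untouched, so $t_j$ stays present and monochromatically bounded. Once this is nailed down, injectivity and the homomorphism property follow by bookkeeping, and the result is proved.
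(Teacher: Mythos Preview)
Your proposal is correct and follows essentially the same approach as the paper: use the pairwise disjointness of the quadrilaterals to see that the flips $\mu_i$ commute and preserve each other's flippability, then take the $2^k$ triangulations obtained by flipping arbitrary subsets and observe they form a $k$-cube. Your write-up is in fact more careful than the paper's, which simply asserts commutativity and that each resulting vertex has degree $k$; you explicitly verify injectivity of the map $\Phi\colon\{0,1\}^k\to V(G)$ and the edge correspondence, and you spell out why disjointness guarantees that flipping $t_i$ leaves the two triangles bordering $t_j$ untouched. One small remark: your phrase ``precisely when'' in the homomorphism step is stronger than needed (and not obviously true---there could in principle be extra flips among the $\Phi(\chi)$), but only the forward direction is required to exhibit $Q_k$ as a subgraph, and that direction you do establish.
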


\begin{proof}
    Denote that $k$ diagonals of $T$ which can be flipped independently by $1,2,\dots, k$. For any $i\ne j$, $1\le i,j\le k$, the flips $\mu_{i}$ and $\mu_{j}$ commute. We consider all the triangulations which can be reached from $T$ by arbitrary $2$-coloured flips of these $k$ diagonals. In the subgraph of the flip graph they define, each of them has degree $k$. So they form a subgraph isomorphic to a $k$-dimensional hypercube as claimed. 
\end{proof}

\begin{coroll}\label{cor:not-planar}
For $n\ge 8$, the 2-coloured flip graph of $P_{n+2}$ contains a connected component which is not planar. 
\end{coroll}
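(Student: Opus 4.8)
The plan is to produce, for each $n \geq 8$, a $2$-coloured triangulation $T$ of $P_{n+2}$ together with $k = 4$ diagonals whose quadrilaterals are pairwise disjoint and each of which can be $2$-coloured flipped, and then invoke \cref{prop:hypercubesinflipgraphsareindependentflips} to conclude that the connected component of $T$ contains a $4$-dimensional hypercube $Q_4$. Since $Q_4$ contains $K_{3,3}$ as a minor (indeed a subdivision — this is a classical fact about hypercubes of dimension $\geq 4$), the component is non-planar by Kuratowski's theorem.

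First I would take $T$ to be the fan triangulation of $P_{n+2}$ with all $n$ triangles coloured the same, as in \cref{ex:size-fan} and \cref{fig:fan-line}. By \cref{ex:size-fan}, one can select $\lfloor n/2 \rfloor$ diagonals in this monochromatic fan whose quadrilaterals are pairwise disjoint (choose the quadrilateral formed by triangles $1,2$, then the one formed by $3,4$, and so on), and since all triangles share the common colour, each of these diagonals is incident to two triangles of the same colour and hence can be $2$-coloured flipped. For $n \geq 8$ we have $\lfloor n/2 \rfloor \geq 4$, so at least $k = 4$ such independent flippable diagonals exist. Applying \cref{prop:hypercubesinflipgraphsareindependentflips} with $k = 4$ gives a $4$-dimensional hypercube subgraph inside the connected component of $T$.

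It then remains to record that $Q_4$ is not planar. The cleanest route is a direct vertex/edge count: $Q_4$ has $16$ vertices and $32$ edges, and as it is bipartite it contains no $3$-cycles, so a planar embedding would satisfy $E \leq 2V - 4$, i.e. $32 \leq 28$, a contradiction. Hence any graph containing $Q_4$ as a subgraph is non-planar, and in particular the connected component of $G$ containing $T$ is non-planar.

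I expect the only genuine content to be the selection of the four pairwise-disjoint flippable quadrilaterals; this is exactly what \cref{ex:size-fan} already supplies, so there is no real obstacle — the statement is essentially a corollary bundling \cref{ex:size-fan}, \cref{prop:hypercubesinflipgraphsareindependentflips}, and the non-planarity of $Q_4$. One small point to check carefully is that the quadrilaterals in the fan really are disjoint in the sense of \cref{def:independent-flips} (sharing at most a diagonal): consecutive chosen quadrilaterals in the fan share the apex vertex and possibly a boundary edge of the polygon but no diagonal, which is permitted, so the hypothesis of \cref{prop:hypercubesinflipgraphsareindependentflips} is met.
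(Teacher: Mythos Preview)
Your proof is correct and follows essentially the same route as the paper: take the monochromatic fan triangulation, extract four pairwise-disjoint flippable quadrilaterals (via \cref{ex:size-fan}), apply \cref{prop:hypercubesinflipgraphsareindependentflips} to obtain a copy of $Q_4$, and deduce non-planarity. The only difference is cosmetic: the paper invokes $K_{3,3}\subset Q_4$, whereas you use the bipartite planar edge bound $E\le 2V-4$, which is arguably tidier.

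One small slip in your final paragraph: you claim consecutive chosen quadrilaterals ``share \ldots\ no diagonal''. In fact the quadrilateral on triangles $1,2$ and the quadrilateral on triangles $3,4$ share the fan diagonal separating triangle $2$ from triangle $3$. This does not harm the argument---\cref{def:independent-flips} explicitly permits sharing a single diagonal---but the sentence as written is inaccurate and should be corrected.
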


\begin{proof} 
    Consider the fan triangulation where every triangle is coloured with the same colour. Let $G$ be the connected component of the coloured flip which contains this coloured fan triangulation. Since $n\ge 8$, there is a sub-polygon with the same structure as the fan decagon (see figure 8). Hence there are at least four quadrilaterals which can be flipped independently, given by the thick lines. Therefore, $G$ contains a 4-dimensional hypercube by \cref{prop:hypercubesinflipgraphsareindependentflips}. Denote this by $Q_4$. Since $Q_4$ has the complete bipartite graph $K_{3,3}$ as a subgraph, and the latter is not planar, $G$ cannot be planar.

\end{proof}

\begin{note} 
    We consider two $k$-dimensional hypercubes in a connected component of the flip graph to be {\em distinct} if they are disjoint or if their intersection is a union of hypercubes of smaller dimension. 
\end{note}

\begin{lem}\label{lm:hypercube}
    Suppose $T$ that is a fan triangulation of a convex $(n+2)$-gon where all triangles have the same colour. Let $G$ be a connected component of the coloured flip graph. Then
    \begin{enumerate}[label=(\roman*)]
        \item if $n$ is even, then $G$ contains a $\frac{n}{2}$-dimensional hypercube and a $(\frac{n}{2}-1)$-dimensional hypercube; 
        
        
        \item if $n$ is odd, then $G$ contains at least two $(\frac{n-1}{2})$-dimensional hypercube.

        
    \end{enumerate}
\end{lem}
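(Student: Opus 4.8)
The natural strategy is to exploit Example~\ref{ex:size-fan} together with Proposition~\ref{prop:hypercubesinflipgraphsareindependentflips}: the fan triangulation with all faces the same colour contains $\lfloor n/2\rfloor$ pairwise disjoint flippable quadrilaterals, so $G$ immediately contains a $\lfloor n/2\rfloor$-dimensional hypercube. This gives half of each statement at once --- the $\frac n2$-cube when $n$ is even, and \emph{one} of the two $\frac{n-1}{2}$-cubes when $n$ is odd. So the real content is producing a \emph{second} large hypercube, distinct from the first in the sense of the Notation preceding the lemma, and (for $n$ even) producing the $(\frac n2-1)$-cube.

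\textbf{Key steps.} First I would fix the fan triangulation $T$ with apex vertex $v$ and label the $n$ triangles $\Delta_1,\dots,\Delta_n$ in order along the fan; the $n-1$ diagonals are $t_1,\dots,t_{n-1}$ where $t_i$ separates $\Delta_i$ from $\Delta_{i+1}$. A diagonal $t_i$ is 2-colour flippable here because all triangles share a colour, and the quadrilateral of $t_i$ is $\Delta_i\cup\Delta_{i+1}$; two such quadrilaterals are disjoint precisely when their index sets $\{i,i+1\}$ are disjoint. Thus a set of pairwise disjoint flippable quadrilaterals corresponds to a matching in the path $P_{n-1}$ on the diagonals. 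The first hypercube comes from the ``greedy from the left'' matching $\{1,2\},\{3,4\},\dots$, of size $\lfloor n/2\rfloor$. For the second hypercube I would instead take the ``greedy from the right'' matching $\{n-1,n-2\},\{n-3,n-4\},\dots$ when $n$ is odd (size $\frac{n-1}{2}$), which uses $t_{n-1}$; since the left matching uses $t_1$ but not $t_{n-1}$ and the right one uses $t_{n-1}$ but not $t_1$, the two resulting hypercubes do not coincide and, by the disjointness-of-quadrilaterals bookkeeping, their intersection is a union of lower-dimensional faces, hence they are distinct in the required sense. When $n$ is even, the $\frac n2$-cube uses the full perfect matching $\{1,2\},\{3,4\},\dots,\{n-1,n\}$ (all triangles paired); for the $(\frac n2-1)$-cube I would take the matching $\{2,3\},\{4,5\},\dots,\{n-2,n-1\}$ which omits $\Delta_1$ and $\Delta_n$, has size $\frac n2-1$, and is again distinct from the first because it does not use $t_1$. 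In each case, Proposition~\ref{prop:hypercubesinflipgraphsareindependentflips} upgrades the matching to a hypercube subgraph of $G$, and $G$ is the same connected component throughout since all these flips start from the single triangulation $T$.

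\textbf{Main obstacle.} The only genuinely delicate point is verifying distinctness of the two hypercubes against the definition in the Notation --- i.e.\ that the intersection of the two hypercube subgraphs is a union of hypercubes of strictly smaller dimension, not something more complicated. I would handle this by noting that each hypercube is the set of triangulations reachable from $T$ by flipping an arbitrary subset of \emph{its own} matching edges, so the two vertex sets intersect exactly in triangulations obtained by flipping subsets of the \emph{common} flippable diagonals that behave identically in both; concretely, a vertex lies in both cubes iff it is reached by flipping only diagonals in the intersection of the two matchings (as sets of diagonals), and this common set, being a sub-matching, spans a face of each cube --- a lower-dimensional hypercube since neither matching is contained in the other (the left one uses $t_1$, the other does not). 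A secondary, purely cosmetic issue is the small-$n$ boundary: for $n=2,3$ one of the claimed cubes may be $0$- or $1$-dimensional and the lemma is vacuous or trivial, so I would either restrict to $n\ge 4$ or simply observe the statement still holds formally.
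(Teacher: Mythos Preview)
Your proposal is correct and follows essentially the same approach as the paper: both arguments produce the required hypercubes by choosing different matchings of consecutive triangles in the fan (greedy from an end versus from an interior triangle, respectively removing the first or last triangle in the odd case) and then invoking Proposition~\ref{prop:hypercubesinflipgraphsareindependentflips}. Your treatment of distinctness---identifying the intersection of the two cubes with the cube on the common sub-matching---is in fact more careful than the paper's, which simply asserts distinctness without verifying it against the Notation.
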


\begin{proof}

    \begin{enumerate}[label=(\roman*)]
        \item When we assume $n$ to be even, the maximum number of independent coloured flips is $\frac{n}{2}$, where we group pairs of adjacent faces starting with a face defined by a single diagonal. In a similar way, if we group adjacent faces starting from a face defined by two diagonals there are two possibilities. Either we end up with $\frac{n}{2}$ independent flips, or we have $\frac{n}{2}-1$ independent flips.
        \item Similar to (i) $\max k = \frac{n-1}{2}$ by removing either the first or the last triangle. Hence, we have at least two distinct hypercubes of dimension $k = \frac{n-1}{2}$. 
    \end{enumerate}
\end{proof}

Note that a version of \cref{lm:hypercube} can be proved for more general triangulations: the number of disjoint quadrilaterals in an arbitrary 2-coloured triangulations gives a lower bound on the dimension of maximal dimensional hypercubes it contains. However, it is more difficult to determine the number of independent quadrilaterals, especially if there are inner triangles.



\begin{exmp}\label{exmp:hypercubeconj}
We illustrate~\cref{lm:hypercube} for the uni-colored fan triangulation of $P_{n}$ with $6\le n\le 9$ in Figures~\ref{fig:conj5hexagon}, \ref{fig:conj5heptagon}, \ref{fig:conj5octagon} and~\ref{fig:conj5nonagon}. In each figure from left to right, the first graph is the original triangulation, and then are the possible hypercubes of different dimensions, and the last one is the combination of all these hypercubes. We number the diagonals in $T$, and the numbers on edges of the $k$-dimensional cube represents a flip of that diagonal. The black points represent the fan triangulation.
\end{exmp}

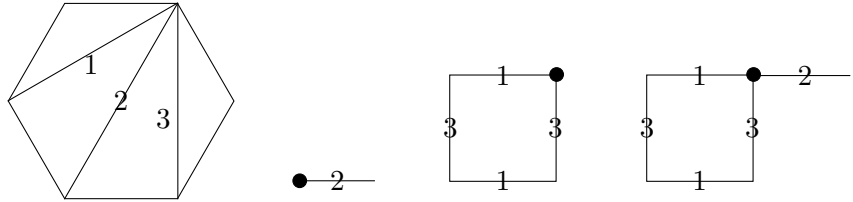
\begin{figure}[h!]
    \centering
    \begin{tikzpicture}[scale=.8]
        \node[draw, regular polygon, regular polygon sides=6, minimum size=3cm] (a) {};
        \draw (a.corner 1) edge (a.corner 3);
        \draw (a.corner 1) edge (a.corner 4);
        \draw (a.corner 1) edge (a.corner 5);
        \node at (-.5,.6) {1};
        \node at (0,0) {2};
        \node at (.7,-.3) {3};
    \end{tikzpicture}\qquad
    \begin{tikzpicture}
        \node[circle, draw, fill=black, scale=.5] at (0,0) {};
        \draw (0,0)--(1,0);
        \node at (.5,0) {2};
    \end{tikzpicture}\qquad
    \begin{tikzpicture}
        \node[draw, regular polygon, regular polygon sides=4, minimum size=2cm] (a) at (2,0) {};
        \node[circle, draw, fill=black, scale=.5] at (a.corner 1) {};
        \node at (1.3,0) {3};
        \node at (2.7,0) {3};
        \node at (2,.7) {1};
        \node at (2,-.7) {1};
    \end{tikzpicture}\qquad
    \begin{tikzpicture}
        \node[draw, regular polygon, regular polygon sides=4, minimum size=2cm] (a) at (2,0) {};
        \draw (2.7,.7)--(4,.7);
        \node[circle, draw, fill=black, scale=.5] at (a.corner 1) {};
        \node at (1.3,0) {3};
        \node at (2.7,0) {3};
        \node at (2,.7) {1};
        \node at (2,-.7) {1};
        \node at (3.4,.7) {2};
    \end{tikzpicture}
    \caption{The connected component containing the fan triangulation of hexagon has the middle two $1$-dimensional and $2$-dimensional cubes}
    \label{fig:conj5hexagon}
\end{figure}

\begin{figure}[h!]
    \centering
    \begin{tikzpicture}
        \node[draw, regular polygon, regular polygon sides=7, minimum size=3cm] (a) {};
        \draw (a.corner 1) edge (a.corner 3);
        \draw (a.corner 1) edge (a.corner 4);
        \draw (a.corner 1) edge (a.corner 5);
        \draw (a.corner 1) edge (a.corner 6);
        \node at (-.7,.6) {1};
        \node at (-.3,.1) {2};
        \node at (.3,.1) {3};
        \node at (.7,.6) {4};
    \end{tikzpicture}\qquad
    \begin{tikzpicture}
        \node[draw, regular polygon, regular polygon sides=4, minimum size=2cm] (a) at (2,0) {};
        \node[draw, regular polygon, regular polygon sides=4, minimum size=2cm] (b) at (3.4,0) {};
        \node[draw, regular polygon, regular polygon sides=4, minimum size=2cm] (c) at (3.4,1.4) {};
        \node[circle, draw, fill=black, scale=.5] at (a.corner 1) {};
        \node at (3.4,.7) {4};
        \node at (3.4,-.7) {4};
        \node at (3.4,2.2) {4};
        \node at (1.3,0) {1};
        \node at (2.7,0) {1};
        \node at (4.2,0) {1};
        \node at (2.7,1.4) {2};
        \node at (4.2,1.4) {2};
        \node at (2,.7) {3};
        \node at (2,-.7) {3};
    \end{tikzpicture}
    \caption{The connected component containing the fan triangulation of heptagon has the $2$-dimensional cubes shown on the right}
    \label{fig:conj5heptagon}
\end{figure}
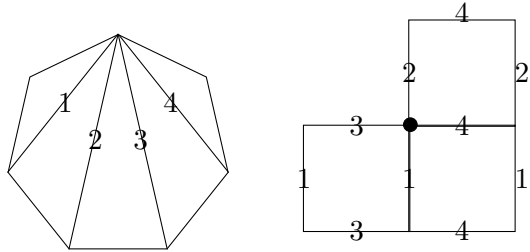

\begin{figure}[h!]
    \centering
    \begin{tikzpicture}
        \node[draw, regular polygon, regular polygon sides=8, minimum size=3cm] (a) {};
        \draw (a.corner 1) edge (a.corner 3);
        \draw (a.corner 1) edge (a.corner 4);
        \draw (a.corner 1) edge (a.corner 5);
        \draw (a.corner 1) edge (a.corner 6);
        \draw (a.corner 1) edge (a.corner 7);
        \node at (-.5,1) {1};
        \node at (-.5,.3) {2};
        \node at (-.1,-.3) {3};
        \node at (.6,0) {4};
        \node at (1,.4) {5};
    \end{tikzpicture}\qquad
    \begin{tikzpicture}
        \node[draw, regular polygon, regular polygon sides=4, minimum size=2cm] (a) at (2,0) {};
        \node[draw, regular polygon, regular polygon sides=4, minimum size=2cm] (b) at (3.4,0) {};
        \node[draw, regular polygon, regular polygon sides=4, minimum size=2cm] (c) at (3.4,1.4) {};
        \node[circle, draw, fill=black, scale=.5] at (a.corner 1) {};
        \node at (3.4,.7) {2};
        \node at (3.4,-.7) {2};
        \node at (3.4,2.2) {2};
        \node at (1.3,0) {4};
        \node at (2.7,0) {4};
        \node at (4.2,0) {4};
        \node at (2.7,1.4) {5};
        \node at (4.2,1.4) {5};
        \node at (2,.7) {1};
        \node at (2,-.7) {1};
    \end{tikzpicture}\qquad
    \begin{tikzpicture}
        \node[circle, draw, fill=black, scale=.5] at (0,2,0) {};
        \draw[thick](2,2,0)--(0,2,0)--(0,2,2)--(2,2,2)--(2,2,0)--(2,0,0)--(2,0,2)--(0,0,2)--(0,2,2);
        \draw[thick](2,2,2)--(2,0,2);
        \draw[dashed](2,0,0)--(0,0,0)--(0,2,0);
        \draw[dashed](0,0,0)--(0,0,2);
        \draw(1,2,2) node{3};
        \draw(1,2,0) node{3};
        \draw(1,0,2) node{3};
        \draw(1,0,0) node{3};
        \draw(0,1,2) node{5};
        \draw(0,1,0) node{5};
        \draw(2,1,0) node{5};
        \draw(2,1,2) node{5};
        \draw(0,2,1) node{1};
        \draw(0,0,1) node{1};
        \draw(2,0,1) node{1};
        \draw(2,2,1) node{1};
    \end{tikzpicture}\qquad
    \begin{tikzpicture}
        \node[circle, draw, fill=black, scale=.5] at (0,2,0) {};
        \draw[thick](2,2,0)--(0,2,0)--(0,2,2)--(2,2,2)--(2,2,0)--(2,0,0)--(2,0,2)--(0,0,2)--(0,2,2);
        \draw[thick](2,2,2)--(2,0,2);
        \draw[dashed](2,0,0)--(0,0,0)--(0,2,0);
        \draw[dashed](0,0,0)--(0,0,2);
        \draw[thick](0,2,0)--(0,4,0)--(0,4,2)--(0,2,2);
        \draw[thick](0,4,0)--(0,4,-2)--(0,2,-2)--(0,2,0);
        \draw[thick](0,2,-2)--(0,0,-2)--(0,0,0);
        \draw(1,2,2) node{3};
        \draw(1,2,0) node{3};
        \draw(1,0,2) node{3};
        \draw(1,0,0) node{3};
        \draw(0,1,2) node{5};
        \draw(0,1,0) node{5};
        \draw(2,1,0) node{5};
        \draw(2,1,2) node{5};
        \draw(0,1,-2) node{5};
        \draw(0,2,1) node{1};
        \draw(0,0,1) node{1};
        \draw(2,0,1) node{1};
        \draw(2,2,1) node{1};
        \draw(0,4,1) node{1};
        \draw(0,2,-1) node{2};
        \draw(0,0,-1) node{2};
        \draw(0,4,-1) node{2};
        \draw(0,3,2) node{4};
        \draw(0,3,0) node{4};
        \draw(0,3,-2) node{4};
    \end{tikzpicture}
    \caption{The connected component containing the fan triangulation of octagon has the middle two $2$-dimensional and $3$-dimensional cubes}
    \label{fig:conj5octagon}
\end{figure}

\begin{figure}[h!]
    \centering
    \begin{tikzpicture}[scale=.8]
        \node[draw, regular polygon, regular polygon sides=9, minimum size=3cm] (a) {};
        \draw (a.corner 1) edge (a.corner 3);
        \draw (a.corner 1) edge (a.corner 4);
        \draw (a.corner 1) edge (a.corner 5);
        \draw (a.corner 1) edge (a.corner 6);
        \draw (a.corner 1) edge (a.corner 7);
        \draw (a.corner 1) edge (a.corner 8);
        \node at (-1,1) {1};
        \node at (-.7,.3) {2};
        \node at (-.3,-.3) {3};
        \node at (.3,-.3) {4};
        \node at (.8,.3) {5};
        \node at (1,1) {6};
    \end{tikzpicture}\qquad
    \begin{tikzpicture}
        \node[draw, regular polygon, regular polygon sides=4, minimum size=2cm] (a) at (2,0) {};
        \node[circle, draw, fill=black, scale=.5] at (a.corner 1) {};
        \node at (1.3,0) {5};
        \node at (2.7,0) {5};
        \node at (2,.7) {2};
        \node at (2,-.7) {2};
    \end{tikzpicture}\qquad
    \begin{tikzpicture}[scale=.8]
        \node[circle, draw, fill=black, scale=.5] at (0,2,0) {};
        \draw[thick](2,2,0)--(0,2,0)--(0,2,2)--(2,2,2)--(2,2,0)--(2,0,0)--(2,0,2)--(0,0,2)--(0,2,2);
        \draw[thick](2,2,2)--(2,0,2);
        \draw[dashed](2,0,0)--(0,0,0)--(0,2,0);
        \draw[dashed](0,0,0)--(0,0,2);
        \draw[thick](0,2,0)--(0,4,0)--(0,4,2)--(0,2,2);
        \draw[thick](0,4,0)--(0,4,-2)--(0,2,-2)--(0,2,0);
        \draw[dashed](0,2,0)--(-2,2,0)--(-2,2,2)--(0,2,2);
        \draw[thick](0,4,0)--(-2,4,0)--(-2,4,2)--(0,4,2);
        \draw[thick](0,4,-2)--(-2,4,-2)--(-2,4,0)--(0,4,0);
        \draw[dashed](0,2,-2)--(-2,2,-2)--(-2,2,0);
        \draw[dashed](0,0,0)--(-2,0,0)--(-2,0,2)--(0,0,2);
        \draw[thick](-2,0,2)--(0,0,2);
        \draw[thick](0,2,2)--(-2,2,2);
        \draw[dashed](-2,0,0)--(-2,4,0);
        \draw[thick](-2,0,2)--(-2,4,2);
        \draw[dashed](-2,2,-2)--(-2,4,-2);
        \draw(1,2,0) node{5};
        \draw(0,1,0) node{3};
        \draw(0,2,1) node{1};
        \draw(0,2,-1) node{2};
        \draw(0,3,0) node{4};
        \draw(-1,2,0) node{6};
    \end{tikzpicture}\qquad
    \begin{tikzpicture}[scale=.8]
        \node[circle, draw, fill=black, scale=.5] at (0,2,0) {};
        \draw[thick](2,2,0)--(0,2,0)--(0,2,2)--(2,2,2)--(2,2,0)--(2,0,0)--(2,0,2)--(0,0,2)--(0,2,2);
        \draw[thick](2,2,2)--(2,0,2);
        \draw[dashed](2,0,0)--(0,0,0)--(0,2,0);
        \draw[dashed](0,0,0)--(0,0,2);
        \draw[thick](0,2,0)--(0,4,0)--(0,4,2)--(0,2,2);
        \draw[thick](0,4,0)--(0,4,-2)--(0,2,-2)--(0,2,0);
        \draw[dashed](0,2,0)--(-2,2,0)--(-2,2,2)--(0,2,2);
        \draw[thick](0,4,0)--(-2,4,0)--(-2,4,2)--(0,4,2);
        \draw[thick](0,4,-2)--(-2,4,-2)--(-2,4,0)--(0,4,0);
        \draw[dashed](0,2,-2)--(-2,2,-2)--(-2,2,0);
        \draw[dashed](0,0,0)--(-2,0,0)--(-2,0,2)--(0,0,2);
        \draw[thick](-2,0,2)--(0,0,2);
        \draw[thick](0,2,2)--(-2,2,2);
        \draw[dashed](-2,0,0)--(-2,4,0);
        \draw[thick](-2,0,2)--(-2,4,2);
        \draw[dashed](-2,2,-2)--(-2,4,-2);
        \draw[thick](2,2,0)--(2,2,-2)--(0,2,-2);
        \draw(1,2,0) node{5};
        \draw(0,1,0) node{3};
        \draw(0,2,1) node{1};
        \draw(0,2,-1) node{2};
        \draw(0,3,0) node{4};
        \draw(-1,2,0) node{6};
    \end{tikzpicture}
    \caption{The connected component containing the fan triangulation of nonagon has the middle two $2$-dimensional and $3$-dimensional cubes}
    \label{fig:conj5nonagon}
\end{figure}

%
\section{Observations and a conjecture}\label{sec:observations}

\begin{figure}[h!]
\includegraphics[width = 0.8\textwidth]{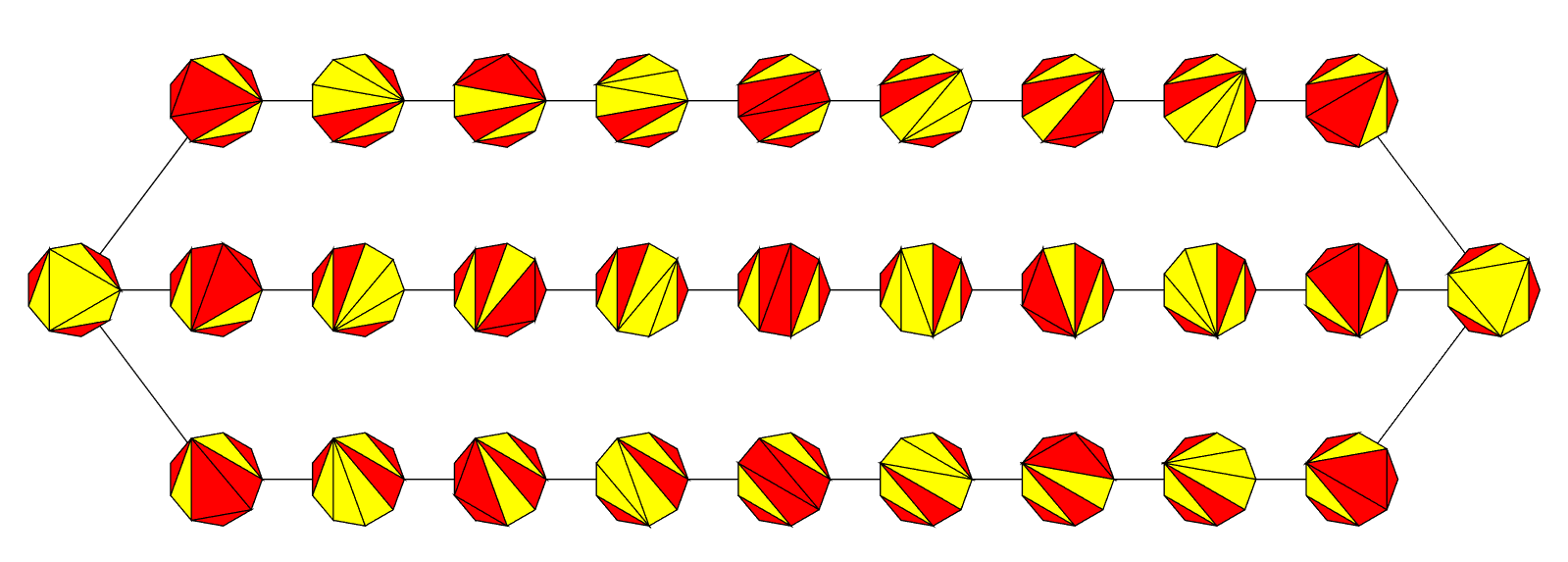}
    \caption{A connected component of the flip graph for coloured nonagon triangulations which contains no fans, and has a minimum cycle size 20.}
\label{fig:counterexampleforgraphstructure}
\end{figure}

We conclude this paper by a number of observations and a conjecture. Let $P_{n+2}$ be a convex $n+2$-gon. 
\begin{ob}
    For $n\le 4$ any  connected component of the 2-coloured flip graphs of $P_{n+2}$ is either a tree, or obtained from adding leaves onto a 4-cycle. 
    See \cref{app:comps}. 
    For $n>4$, this is not true anymore.  An example is a component for $n=7$ in \cref{fig:counterexampleforgraphstructure}. 
\end{ob}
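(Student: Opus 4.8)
The plan is to treat the two halves of the Observation separately: for $n\le 4$ by an exhaustive but small enumeration, organised around the structural constraints already proved, and for $n>4$ by exhibiting a single explicit counterexample.

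For the range $n\le 4$: by \cref{lem:numberofcolouredtriangulations}(i) there are only $C_n2^n$ coloured triangulations, that is $2,16,40$ and $224$ for $n=1,2,3,4$, so a finite check is feasible. First I would dispatch the trivial cases: $n=1$ gives a single isolated vertex, and for $n=2$ (the square) the four monochromatic coloured triangulations form two disjoint edges while the remaining four coloured triangulations are isolated points, so every component is a tree. For $n=3$ (the pentagon) and $n=4$ (the hexagon) I would cut down the work by quotienting by the symmetry group — the dihedral group $D_{n+2}$ together with the colour swap $+\leftrightarrow-$ — and then trace out the component of each orbit representative. The key leverage is that \cref{thm:cycles-are-even} forces every cycle to be even, so a non-tree component has girth at least $4$; moreover in a hexagon at most $\lfloor 4/2\rfloor=2$ flips can be pairwise independent (cf.\ \cref{ex:size-fan} and \cref{cor:not-planar}), so the only hypercubes present are $1$- and $2$-cubes, and a $4$-cycle cannot have a chord (that would produce a forbidden $3$-cycle). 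A short local analysis of how flips compose then rules out $6$-cycles and configurations containing more than one $4$-cycle, leaving only induced $4$-cycles, and one checks directly that every vertex not lying on the $4$-cycle sits on a pendant tree. This reproduces exactly the list of component types drawn in \cref{fig:connectedcomponentsofhexagon}; the remaining case bookkeeping is carried out in \cref{app:comps}.

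For $n>4$: it suffices to produce one component which is neither a tree nor obtained from a $4$-cycle by adding leaves, and \cref{fig:counterexampleforgraphstructure} does this for $n=7$ (the nonagon $P_9$). The component shown there contains a cycle, hence is not a tree; and its shortest cycle has length $20$, hence it is not obtained from a $4$-cycle by adding leaves. I would verify both properties directly from the flip sequences depicted in the figure (equivalently, from the computer enumeration that produced it): exhibit the $20$-cycle explicitly, and confirm that no shorter cycle exists, the latter being consistent with the even-cycle constraint of \cref{thm:cycles-are-even} and the size bound of \cref{thm:component-size}.

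The main obstacle is the \emph{completeness} of the enumeration for $n=4$: there is no slick structural reason forbidding longer even cycles or overlapping $4$-cycles in the hexagon's flip graph, so one genuinely has to certify their absence. The symmetry reduction together with the parity and size bounds of \cref{thm:cycles-are-even} and \cref{thm:component-size} reduce this to a manageable finite check, but it is the crux of the argument; by contrast the $n>4$ direction is immediate once the example of \cref{fig:counterexampleforgraphstructure} is in hand.
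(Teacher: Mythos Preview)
Your approach matches the paper's: the Observation is justified there purely by the exhaustive listing in \cref{app:comps} (produced by computer search, as noted in the appendix on component sizes) together with the explicit $n=7$ component in \cref{fig:counterexampleforgraphstructure}. There is no additional structural argument in the paper beyond what you propose; if anything, your symmetry reduction and use of \cref{thm:cycles-are-even} are more than the paper offers.

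Two small corrections. First, an arithmetic slip: for $n=2$ there are $C_2\cdot 2^2=8$ coloured triangulations, not $16$ (your own description of the square case correctly accounts for $4+4=8$ of them). Second, your ``short local analysis'' ruling out $6$-cycles and multiple $4$-cycles in the hexagon is asserted but not carried out, and you rightly flag this as the crux; the paper does not supply such an argument either and simply relies on the complete enumeration in the appendix, so you should not expect a clean theoretical shortcut here---the honest route is the finite check you describe.
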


\begin{ob}
    There are connected components of the flip graph which do not have any leaves, see e.g.\cref{fig:counterexampleforgraphstructure} for $n=7$ or~\ref{app:comps} in the case of $n=6$. 
\end{ob}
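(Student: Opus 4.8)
The statement is existential, so the plan is to exhibit non-trivial connected components of the $2$-coloured flip graph in which no vertex is a leaf, namely the component of the flip graph of $P_9$ shown in \cref{fig:counterexampleforgraphstructure} and the $P_8$-component recorded in \cref{app:comps}. Recall from \cref{def:flip} that a diagonal $t$ of a $2$-coloured triangulation $T$ can be flipped exactly when the two triangles incident with $t$ carry the same colour; call such a diagonal \emph{active}. Since two flips at distinct active diagonals always produce distinct triangulations (the result of flipping $t$ still contains every other diagonal of $T$ but no longer contains $t$), the degree of a vertex $T$ in the flip graph equals its number of active diagonals, and hence $T$ is a leaf precisely when it has exactly one active diagonal. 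It therefore suffices to verify that every $2$-coloured triangulation occurring in the chosen component has at least two active diagonals.

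The direct way to do this is to generate the component by breadth-first search: starting from one of its triangulations, repeatedly apply every available $2$-coloured flip until no new triangulation appears. This terminates because $P_9$ has only $C_7\cdot 2^7$ coloured triangulations (\cref{lem:numberofcolouredtriangulations}), and the constancy of the weighting of \cref{def:weighting} along a component gives a convenient running check that one has not left it. Once the vertex set is in hand one reads off all degrees and checks that none equals $1$; from the same data one can in addition confirm the quoted facts that this component contains no fan triangulation and that its shortest cycle has length $20$.

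For a more hand-checkable argument I would replace the brute-force step by a count on the dual tree $G_T$ of a $2$-coloured triangulation $T$ of $P_{n+2}$ (as in \cref{rem:why-red-blue}). Writing $c_+$ and $c_-$ for the numbers of connected components induced in $G_T$ by the triangles of colour $+$ and of colour $-$, and using that a forest on $k$ vertices with $c$ components has $k-c$ edges, the number of monochromatic edges of $G_T$ — equivalently, the number of active diagonals of $T$ — equals $n-c_+-c_-$. Thus $T$ is a leaf if and only if $c_++c_-=n-1$, i.e.\ the colouring of $G_T$ is alternating except for a single monochromatic adjacency, and it remains to check that this ``almost-alternating'' pattern is not realised by any triangulation of the component, which for $n=6$ and $n=7$ is a short case analysis.

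The main obstacle is conceptual rather than computational: there is no structural obstruction forcing leaflessness — the fan components of \cref{ex:conn-comp-fan}, for instance, are genuine paths and therefore do have two leaves — so the proof is in essence a finite verification, and the real work is to present the component compactly (exploiting its symmetries, as in \cref{fig:counterexampleforgraphstructure}) so that the degree count is feasible without listing every vertex. I would also flag, without settling it here, the natural question of whether leafless components exist for all sufficiently large $n$; the identity above, which says that leaflessness of a component is equivalent to every one of its members satisfying $c_++c_-\le n-2$, is the natural starting point.
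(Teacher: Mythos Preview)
Your proposal is correct, but it is considerably more elaborate than what the paper does. In the paper this statement is recorded as an \emph{observation}: no argument is given beyond pointing to the explicit component in \cref{fig:counterexampleforgraphstructure} (for $n=7$) and to the list of component shapes in Appendix~\ref{app:comps} (for $n=6$), where one can see by inspection that certain components have no degree-$1$ vertices. There is no verification procedure, no dual-tree count, and no discussion of how one would certify the examples.

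What you add is genuine content. Your identification of ``degree in the flip graph $=$ number of active diagonals'' is the right lens, and the formula $n-c_+-c_-$ for that number via the induced forests in the dual tree $G_T$ is a clean device the paper does not mention; it turns ``$T$ is a leaf'' into the concrete combinatorial condition $c_++c_-=n-1$, i.e.\ the colouring is alternating except for one monochromatic adjacency. That reformulation is what would let one replace a computer search by a short case analysis, and it also points naturally to the follow-up question you raise about leafless components for all large $n$. So: same underlying evidence as the paper (the exhibited components), but you supply a verification strategy and a structural characterisation of leaves that the paper leaves implicit.
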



\begin{figure}[h!]
     \includegraphics[width = 0.5\textwidth]{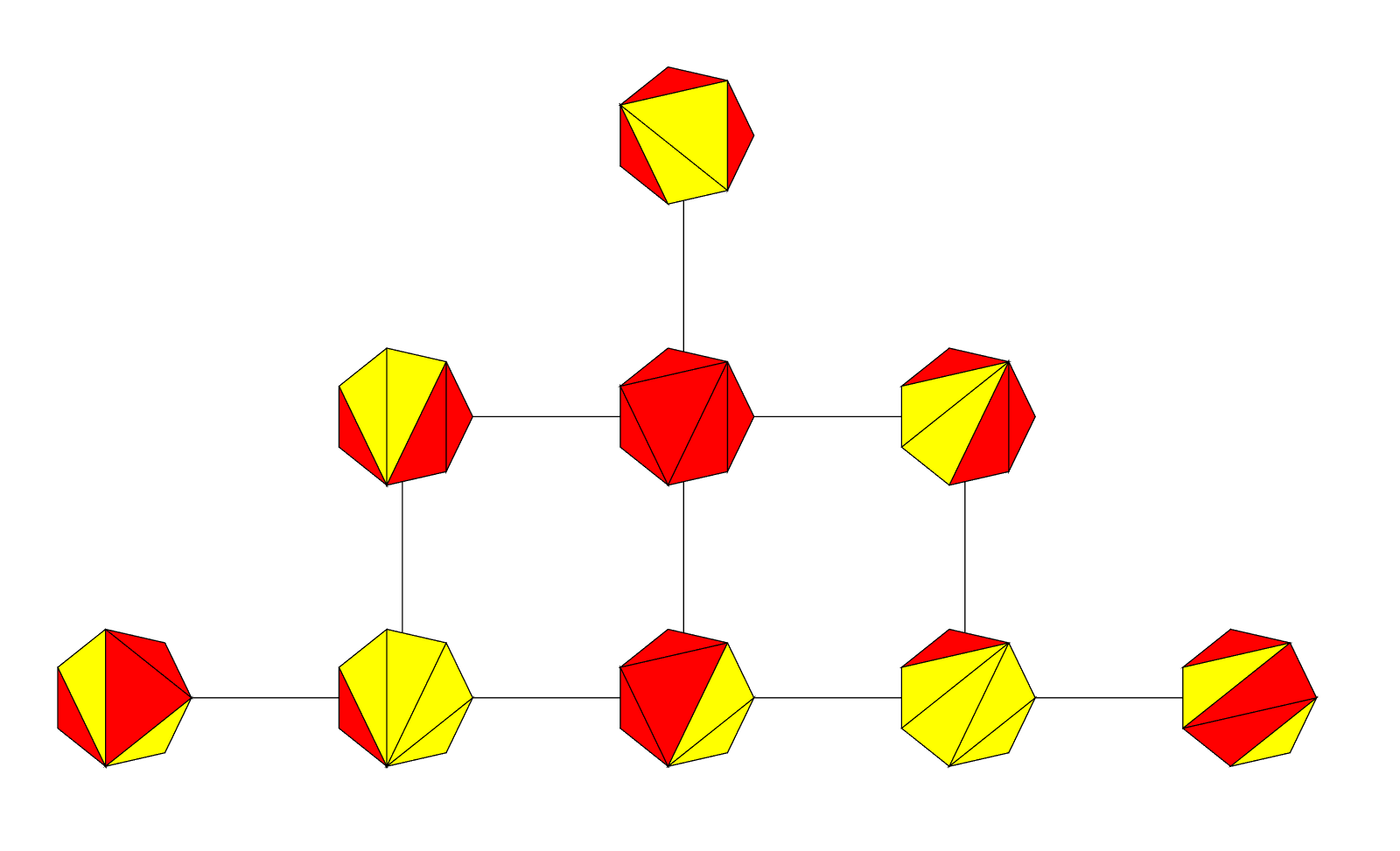}
     \caption{A section of a connected component for the heptagon, which has 3 leaves of 2 different triangulation types.}
     \label{fig:counterexampleleaves}
 \end{figure}

In the examples we considered, no two triangulations in a connected component contained two triangles with the same vertices but with different colours. See for example Figure~\ref{fig:counterexampleforgraphstructure} for an illustration. We suspect that this could be true in general: 

\begin{conj}
 In a connected component of the 2-coloured flip graph, a triangle cannot appear in the same position but with different colours. 
\end{conj}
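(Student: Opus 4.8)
The plan is to prove the stronger statement that \emph{the colour of a triangle in a $2$-coloured triangulation is a function of the weighting of the polygon alone} (in the sense of \cref{def:weighting}). Since $2$-coloured triangulations in the same connected component induce the same weighting (the invariance lemma following \cref{def:weighting}), this immediately yields the conjecture. So let $T_1,T_2$ be in the same component, let $\tau$ be a triangle on vertices $a,b,c$ occurring in both, with colours $s_1(\tau),s_2(\tau)$, and let $p\colon V(P_{n+2})\to\{-1,0,1\}$ be their common weighting. It suffices to recover $s(\tau)$ from $p$ for any $2$-coloured triangulation $T\ni\tau$ with weighting $p$.

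The first structural step is to cut the polygon along the three sides of $\tau$: this splits $P_{n+2}$ into $\tau$ and (up to) three subpolygons $P_A,P_B,P_C$ bounded respectively by $ab$, $bc$, $ca$ (the one beyond $ab$ is degenerate exactly when $ab$ is a boundary edge, and similarly for the others). For a $2$-coloured triangulation $T\ni\tau$, its restriction to $P_A$ is a $2$-coloured triangulation of $P_A$; write $q^A(x)=\sum_{x\in\triangle\subseteq P_A}s(\triangle)\bmod 3$ for its vertex function, and similarly $q^B,q^C$. Because $ab\in T$ is a genuine edge of $T$, no triangle of $T$ crosses it, so for any vertex $x$ of $P_A$ other than $a,b$ every triangle of $T$ at $x$ lies in $P_A$; hence $q^A(x)=p(x)$ there. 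Counting the triangles of $T$ incident to $a$ (namely $\tau$, plus those in $P_A$ and those in $P_C$, none in $P_B$) gives $p(a)=s(\tau)+q^A(a)+q^C(a)\bmod 3$, and symmetrically at $b$ and $c$. So it remains to show that the ``endpoint values'' $q^A(a),q^A(b)$ (and their analogues) depend only on $p$.

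This is the key lemma: \emph{Let $Q$ be a convex polygon with vertices $v_0,\dots,v_m$ in cyclic order, let $R$ be a $2$-coloured triangulation of $Q$, and put $q(v_i)=\sum_{v_i\in\triangle}s(\triangle)\bmod 3$; then $q(v_0)$ and $q(v_m)$ are determined by the tuple $(q(v_1),\dots,q(v_{m-1}))$.} I would prove this by induction on $m$, the cases $m\le 2$ being immediate (for $m=2$, $q(v_0)=q(v_1)=q(v_2)$). For $m\ge 3$: $R$ has at least two ears, and the \emph{tip} of an ear (the vertex lying on its two boundary sides) lies in exactly one triangle of $R$. If both ears had tips in $\{v_0,v_m\}$, then $R$ would contain the two triangles $v_0v_1v_m$ and $v_0v_{m-1}v_m$, which share the boundary edge $v_0v_m$, forcing $v_1=v_{m-1}$ and hence $m=2$, a contradiction. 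So some ear has a tip $v_i$ with $1\le i\le m-1$, and its colour equals $q(v_i)$. Removing it gives a triangulation $R'$ of $Q'=v_0\cdots\widehat{v_i}\cdots v_m$ whose vertex function $q'$ agrees with $q$ except that $q'(v_{i-1})=q(v_{i-1})-q(v_i)$ and $q'(v_{i+1})=q(v_{i+1})-q(v_i)$; in particular every $q'(v_j)$ is an explicit function of $q(v_1),\dots,q(v_{m-1})$. By the inductive hypothesis $q'(v_0),q'(v_m)$ are functions of the interior values of $Q'$, hence of $q(v_1),\dots,q(v_{m-1})$; and $q(v_0)$ differs from $q'(v_0)$ by $q(v_i)$ only when $i=1$ (similarly for $v_m$ and $i=m-1$). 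This closes the induction.

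Applying the lemma to $P_A$ (with $a,b$ as endpoints and the interior vertices being exactly those at which $q^A$ equals $p$), we see $q^A(a),q^A(b)$ are functions of $p$, and likewise for $P_B,P_C$; then $p(a)=s(\tau)+q^A(a)+q^C(a)$ determines $s(\tau)\bmod 3$, and since $-1\not\equiv 1\pmod 3$ it determines $s(\tau)$ itself. Hence $s_1(\tau)=s_2(\tau)$, as claimed. I expect the main obstacle to be the key lemma of the third paragraph: a priori the weighting is only $\mathbb{Z}/3$-valued and each triangle contributes to three vertices, so working modulo $3$ looks like it destroys the colour — the real content is that the constraint $s(\triangle)\in\{-1,+1\}$, propagated inward through the caps, still pins it down, and making the induction robust (in particular handling the triangulations whose only ears sit at the two endpoints) is the delicate point.
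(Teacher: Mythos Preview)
The paper states this as an open \emph{conjecture} without proof, so there is no paper-proof to compare against; your attempt would, if correct, resolve a question the authors leave open. Your overall strategy --- cut along $\tau$ and reduce to showing that the endpoint contributions $q^A(a),q^A(b),\dots$ depend only on the interior weights --- is sound, and the conclusion is in fact true. A short route goes through the paper's own Appendix: the vertex $4$-colouring $col$ is invariant along a connected component (Remark~\ref{rem:effect-flip}), and one checks directly that for any triangle $\tau=xyz$ (counterclockwise) the sign $s(\tau)$ equals the parity of the permutation $(col(x),col(y),col(z),d)$ of $\{1,2,3,4\}$, where $d$ is the missing colour. Since this formula uses only $col$ restricted to the vertices of $\tau$, and $col$ is constant on the component, $s(\tau)$ is constant too.

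Your inductive proof of the key lemma, however, has a genuine gap. The ear you remove at the interior vertex $v_i$ is chosen \emph{for the particular triangulation $R$}; a second $2$-coloured triangulation $R'$ realising the same interior tuple $(q(v_1),\dots,q(v_{m-1}))$ need not have an ear at $v_i$, only at some other $v_j$. Your induction then sends $R$ to a triangulation of $Q\setminus\{v_i\}$ and $R'$ to one of $Q\setminus\{v_j\}$: different polygons, different reduced interior tuples, and the inductive hypothesis gives no way to compare the two outputs. You have shown that for each $R$ the endpoint values are \emph{computable} from the interior ones, but not that the computation is \emph{independent of $R$} --- which is precisely what ``determined by'' must mean for the application to go through. (Your closing remark about robustness anticipates trouble, but the real issue is not the case where the only ears sit at $v_0,v_m$ --- you dealt with that correctly --- it is that the chosen interior ear varies with $R$.) One repair is to show that whenever $q(v_i)\ne 0$, the reduced tuple on $Q\setminus\{v_i\}$ is realised by \emph{some} $2$-coloured triangulation, regardless of whether $v_i$ happens to be an ear of the given $R$; this holds because $q(v_i)\ne 0$ forces $col(v_{i-1})\ne col(v_{i+1})$ (Lemma~\ref{lem:zeroweight}), so the diagonal $v_{i-1}v_{i+1}$ is always available and one can build a compatible triangulation of $Q$ having an ear at $v_i$.
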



\medskip

\appendix

%
\section{Proof of Theorem \ref{thm1}}\label{sec:4colour-signed} 

We recall the statement of Theorem~\ref{thm1} from the Introduction: The Four-Colour Theorem holds if and only if for any two triangulations of a convex polygon, one can 2-colour them in a way that there exists a sequence of 2-coloured flips linking the two triangulations. 

This result by Gravier and Payan motivates the notion of coloured mutation. The work of Gravier and Payan has appeared in French in 2002. For the convenience of the reader, we summarize their reasoning in this section. 


We first recall the notions needed. 
In this section, we will use `signed triangulations' to refer to 2-coloured triangulations in order to distinguishing from the notion of a colour in the 4-colour theorem. 

\begin{defn}
Let $P$ be a convex polygon. We introduce the following definitions:
    \begin{itemize}
        \item Let $T$ be a triangulation of $P$. We write $\mathcal{D}(T)$ for the set of all diagonals of $T$ and $\mathcal F(T)$ for its faces (the triangles). 
        \item A \emph{sub-polygon} $S \subset P$ is a polygon whose vertices are a subset of those of $P$, and which respects the  cyclic order of the vertices of $P$.
        \item $S-x$ denotes the sub-polygon induced by all vertices except $x$.
        \item A \emph{signed triangulation} of $P$ is a 2-coloured triangulation $T$ of the polygon, i.e. a pair $\{T,s\}$, where $s:\mathcal{F}(T) \rightarrow \{+1,-1\}$ is a 2-colouring of the triangles 
        of $T$. Let $\overline{s}$ be the signed triangulation obtained from $s$ by changing all signs. We write $(T,s)$ to denote the class $\{\{T,s\}, \{T,\overline{s}\}\}$.
        \item A {\em signed flip} is a 2-coloured flip of a diagonal of a signed triangulation. 
        \item 
        We recall the definition of a weighting of $P$ (Definition~\ref{def:weighting}) and introduce a notation suitable with the other terms of this section: 
        The pair $\{T,p\}$ where $p:V(T)\to \{-1,0,+1\}$ is a function on the vertices of $T$ (or of $P$) is called a {\em weighting} of $T$ is there exists a 2-colouring $s$ of $T$ such that for every vertex $x$ of $T$ we have $p(x)=\sum_{t\in \mathcal{F}(T)}s(t) \mod 3$. 
        Similarly as before, if $\{T,p\}$ is a weighting and $s$ a 2-colouring giving rise to it, we write $\{T,\overline{p}\}$ for the weighting associated to $\overline{s}$. We use $(T,p)$ to denote the valuation $p$ of $T$, up to exchanging $s$ with $\overline{s}$. 
        \item A \emph{valuation} of $T$ is a pair $(T,v)$, where $v:\mathcal{D}(T) \rightarrow \{0,1\}$ assigns $0$ or $1$ to every diagonal of $T$. 
        \item A \emph{colouring} of $T$ is a pair $(T,col)$, where $col$ is a 4-proper colouring of the vertices of $T$ (i.e., no two vertices adjacent under $T$  share the same colour). We will often use the letters $a,b,c,d$ to indicate the four colours of a colouring. We only consider colourings up to permutation of colours.
    \end{itemize}
\end{defn}

Let $\{T,s\}$ be a signed triangulation. The signs determine a weighting of $T$ by definition. There is a natural way to associate a valuation 
$(T,v)$ to any signed triangulation $\{T,s\}$ if a diagonal is incident with two triangles of the same sign, its valuation is set to be $0$. Otherwise, its valuation is set to be $1$. By definition, this procedure associates the same valuation $v$ to $\{T,\overline{s}\}$. So we can naturally assign a valuation $(T,v)$ to $(T,s)$.

\begin{exmp}
    See \cref{fig:signedtriangulation} for an example of a signed triangulation $T$ of heptagon, with associated weighting (on the left), valuation (in the middle) and with a colouring for $T$ (on the right).
    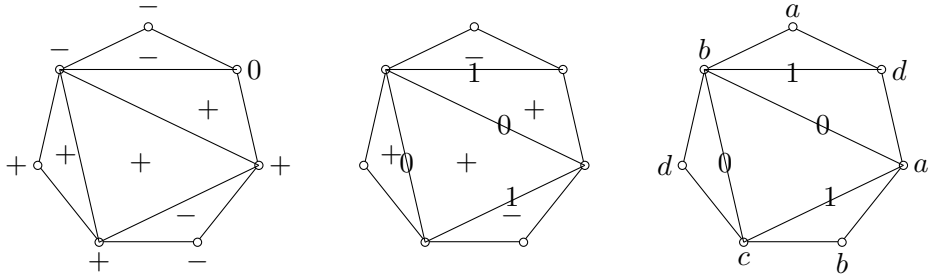
\begin{figure}[h!]
        \centering
        \begin{tikzpicture}[baseline]
            \node[draw, regular polygon, regular polygon sides=7, minimum size=3cm] (a) {};
            \foreach \i in {1,2,...,7}
                \node[circle, draw, fill=white, fill opacity=1, scale=0.3] at (a.corner \i) {};
            \node[above] at (a.corner 1) {$-$};
            \node[above] at (a.corner 2) {$-$};
            \node[left] at (a.corner 3) {$+$};
            \node[below] at (a.corner 4) {$+$};
            \node[below] at (a.corner 5) {$-$};
            \node[right] at (a.corner 6) {$+$};
            \node[right] at (a.corner 7) {$0$};
            \draw (a.corner 2) edge (a.corner 7);
            \draw (a.corner 2) edge (a.corner 4);
            \draw (a.corner 4) edge (a.corner 6);
            \draw (a.corner 6) edge (a.corner 2);
            \node at (-.1,-.3) {$+$};
            \node at (.8,.4) {$+$};
            \node at (0,1.1) {$-$};
            \node at (-1.1,-.2) {$+$};
            \node at (.5,-1) {$-$};
        \end{tikzpicture}\qquad
        \begin{tikzpicture}[baseline]
            \node[draw, regular polygon, regular polygon sides=7, minimum size=3cm] (a) {};
            \foreach \i in {1,2,...,7}
                \node[circle, draw, fill=white, fill opacity=1, scale=0.3] at (a.corner \i) {};
            \draw (a.corner 2) edge (a.corner 7);
            \draw (a.corner 2) edge (a.corner 4);
            \draw (a.corner 4) edge (a.corner 6);
            \draw (a.corner 6) edge (a.corner 2);
            \node at (-.1,-.3) {$+$};
            \node at (.8,.4) {$+$};
            \node at (0,1.1) {$-$};
            \node at (-1.1,-.2) {$+$};
            \node at (.5,-1) {$-$};
            \node at (0,.9) {$1$};
            \node at (-.9,-.3) {$0$};
            \node at (.4,.2) {$0$};
            \node at (.5,-.75) {$1$};
        \end{tikzpicture}\qquad
        \begin{tikzpicture}[baseline]
            \node[draw, regular polygon, regular polygon sides=7, minimum size=3cm] (a) {};
            \foreach \i in {1,2,...,7}
                \node[circle, draw, fill=white, fill opacity=1, scale=0.3] at (a.corner \i) {};
            \draw (a.corner 2) edge (a.corner 7);
            \draw (a.corner 2) edge (a.corner 4);
            \draw (a.corner 4) edge (a.corner 6);
            \draw (a.corner 6) edge (a.corner 2);
            \node[above] at (a.corner 1) {$a$};
            \node[above] at (a.corner 2) {$b$};
            \node[left] at (a.corner 3) {$d$};
            \node[below] at (a.corner 4) {$c$};
            \node[below] at (a.corner 5) {$b$};
            \node[right] at (a.corner 6) {$a$};
            \node[right] at (a.corner 7) {$d$};
            \node at (0,.9) {$1$};
            \node at (-.9,-.3) {$0$};
            \node at (.4,.2) {$0$};
            \node at (.5,-.75) {$1$};
        \end{tikzpicture}
        \caption{A signed triangulation of heptagon with weighting, valuation, colouring.}        \label{fig:signedtriangulation}
    \end{figure}
\end{exmp}

Notice that 
``signed triangulations,  weighting, valuation and colouring'' are equivalent notions, up to taking the opposite signs/weights:

\begin{enumerate}
    \item 
    $(T,s) \equiv (T,p)$: Weightings and 2-colourings are equivalent by definition. 
    \item $(T,s) \equiv (T,v)$: 
    Any signed triangulation $(T,s)$ gives a valuation $(T,v)$ as we have explained above (for any diagonal $xy\in T$, $v(xy)=0$ if and only if the two triangles adjacent to $xy$ have the same sign). Conversely, any valuation $(T,v)$ gives rise to two signed triangulations $\{T,s\}$ and $\{T,\overline{s}\}$. 
    \item $(T,v) \equiv (T,col)$. 
    Given a valuation $(T,v)$, we construct a 4-colouring $col$ of the vertices of $P$ compatible with $T$, denoted by $col(T,v)$: Choose a vertex of degree 2 in $T$. Such a vertex lies in a triangle which has two boundary edges (every triangulation has at least two such triangles). 
    We colour the three vertices of this triangles in three different colours. We proceed as follows: for any quadrilateral with vertices $xyzt$, formed by two adjacent triangles sharing the common diagonal $yt$, we colour $x,z$ in the same colour if and only if the diagonal $yt$ is valued 1 under $v$. 
    Starting with the above triangle, we thus obtain a colouring of $T$ with (up to) four colours. The colouring $col(T,v)$ is unique up to permutation of the colours. 
    
    Reciprocally, starting from $(T,col)$, we get a valuation of $T$ by setting a diagonal of any quadrilateral to be 0 if and only if the four vertices of the quadrilateral this diagonal determines are all coloured differently. 
\end{enumerate}

By the above, it makes sense to write $(T,\varepsilon)$ where $\varepsilon$ is in $\{s,p,v,col\}$  as these are all equivalent.

\begin{remark}\label{rem:effect-flip} 
Let $T$ be a triangulation of a polygon. We comment on the effect of a flip on the notions weighting, valuation and colouring. See \cref{fig:signedflip} for an illustration. 
    
\begin{itemize}  
\item Any flippable diagonal has valuation $0$. If we flip it, the new diagonal also has valuation $0$ while the diagonals bounding the corresponding quadrilateral change their valuation. 
\item    
The weighting of the vertices remains unchanged under flips. 
\item     
Any colouring for $T$ is still a colouring for the new triangulation. 
\end{itemize}
\end{remark}

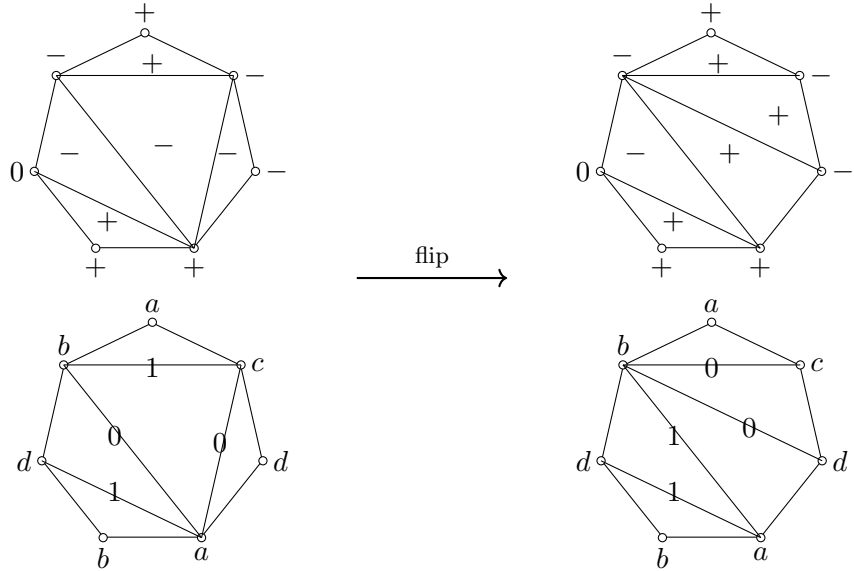
\begin{figure}[h!]
    \centering
    \begin{tikzpicture}
        \node[draw, regular polygon, regular polygon sides=7, minimum size=3cm] (a) {};
        \foreach \i in {1,2,...,7}
            \node[circle, draw, fill=white, fill opacity=1, scale=0.3] at (a.corner \i) {};
        \draw (a.corner 2) edge (a.corner 7);
        \draw (a.corner 2) edge (a.corner 5);
        \draw (a.corner 3) edge (a.corner 5);
        \draw (a.corner 5) edge (a.corner 7);
        \node[above] at (a.corner 1) {$+$};
        \node[above] at (a.corner 2) {$-$};
        \node[left] at (a.corner 3) {$0$};
        \node[below] at (a.corner 4) {$+$};
        \node[below] at (a.corner 5) {$+$};
        \node[right] at (a.corner 6) {$-$};
        \node[right] at (a.corner 7) {$-$};
        \node at (.25,0) {$-$};
        \node at (1.1,-.1) {$-$};
        \node at (.1,1.1) {$+$};
        \node at (-1,-.1) {$-$};
        \node at (-.5,-1) {$+$};
    \end{tikzpicture}\qquad
    \begin{tikzpicture}
        \draw [->, thick] (2,6) -- (4,6) node [pos=0.5,above,font=\footnotesize] {flip};
    \end{tikzpicture}\qquad
    \begin{tikzpicture}
        \node[draw, regular polygon, regular polygon sides=7, minimum size=3cm] (a) {};
        \foreach \i in {1,2,...,7}
            \node[circle, draw, fill=white, fill opacity=1, scale=0.3] at (a.corner \i) {};
        \draw (a.corner 2) edge (a.corner 7);
        \draw (a.corner 2) edge (a.corner 5);
        \draw (a.corner 3) edge (a.corner 5);
        \draw (a.corner 2) edge (a.corner 6);
        \node[above] at (a.corner 1) {$+$};
        \node[above] at (a.corner 2) {$-$};
        \node[left] at (a.corner 3) {$0$};
        \node[below] at (a.corner 4) {$+$};
        \node[below] at (a.corner 5) {$+$};
        \node[right] at (a.corner 6) {$-$};
        \node[right] at (a.corner 7) {$-$};
        \node at (.25,-.1) {$+$};
        \node at (.9,.4) {$+$};
        \node at (.1,1.1) {$+$};
        \node at (-1,-.1) {$-$};
        \node at (-.5,-1) {$+$};
    \end{tikzpicture}\\
    \begin{tikzpicture}
        \node[draw, regular polygon, regular polygon sides=7, minimum size=3cm] (a) {};
        \foreach \i in {1,2,...,7}
            \node[circle, draw, fill=white, fill opacity=1, scale=0.3] at (a.corner \i) {};
        \draw (a.corner 2) edge (a.corner 7);
        \draw (a.corner 2) edge (a.corner 5);
        \draw (a.corner 3) edge (a.corner 5);
        \draw (a.corner 5) edge (a.corner 7);
        \node[above] at (a.corner 1) {$a$};
        \node[above] at (a.corner 2) {$b$};
        \node[left] at (a.corner 3) {$d$};
        \node[below] at (a.corner 4) {$b$};
        \node[below] at (a.corner 5) {$a$};
        \node[right] at (a.corner 6) {$d$};
        \node[right] at (a.corner 7) {$c$};
        \node at (0,.9) {$1$};
        \node at (-.5,0) {$0$};
        \node at (.9,-.1) {$0$};
        \node at (-.5,-.75) {$1$};
    \end{tikzpicture}\qquad
    \begin{tikzpicture}
        \draw [->, thick, opacity=0] (2,6) -- (4,6) node [pos=0.5,above,font=\footnotesize] {flip};
    \end{tikzpicture}\qquad
    \begin{tikzpicture}
        \node[draw, regular polygon, regular polygon sides=7, minimum size=3cm] (a) {};
        \foreach \i in {1,2,...,7}
            \node[circle, draw, fill=white, fill opacity=1, scale=0.3] at (a.corner \i) {};
        \draw (a.corner 2) edge (a.corner 7);
        \draw (a.corner 2) edge (a.corner 5);
        \draw (a.corner 3) edge (a.corner 5);
        \draw (a.corner 2) edge (a.corner 6);
        \node[above] at (a.corner 1) {$a$};
        \node[above] at (a.corner 2) {$b$};
        \node[left] at (a.corner 3) {$d$};
        \node[below] at (a.corner 4) {$b$};
        \node[below] at (a.corner 5) {$a$};
        \node[right] at (a.corner 6) {$d$};
        \node[right] at (a.corner 7) {$c$};
        \node at (0,.9) {$0$};
        \node at (-.5,0) {$1$};
        \node at (.5,.1) {$0$};
        \node at (-.5,-.75) {$1$};
    \end{tikzpicture}
    \caption{The effect of a signed flip on  weighting, valuation, colouring.}
    \label{fig:signedflip}
\end{figure}

Note that a 3-colour colouring of a triangulation corresponds to the case where each diagonal has value 1, and such signed triangulation is called \emph{alternating}. 
Alternating signed triangulations are isolated vertices in the flip exchange graph and so they are not of interest for us.  

\begin{defn}
    Let $(T,\varepsilon)$ and 
    $(T', \varepsilon')$ be two signed triangulations of the same polygon. We write $(T,\varepsilon) \sim (T',\varepsilon')$ if there exists a sequence of 2-coloured flips from $(T,\varepsilon)$ to $(T',\varepsilon')$. 
    This sequence may be empty (i.e. we allow $T=T'$ with $\varepsilon=\varepsilon'$). 
    One can check that $\sim$
    is an equivalence relation, we denote the class of $(T,\varepsilon)$ by $[T,\varepsilon]$. 
\end{defn}

Now we are ready to prove~\cref{thm1} which we reformulate as follows: 

\begin{thm}\label{thm:equiv} 
Let $(T,v)\neq (T',v')$ be signed triangulations of $P$. 
%
Then $(T,v) \sim (T',v')$ if and only if 
$col(T,v)=col(T',v')$ and it uses 4 colours.
\end{thm}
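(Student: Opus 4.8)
The plan is to prove the two implications separately; the forward direction is short, and the converse carries the weight.

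\emph{Direction $(\Rightarrow)$.} Suppose $(T,v)\sim(T',v')$. By Remark~\ref{rem:effect-flip} a $2$-coloured flip leaves the associated colouring unchanged, so $col(T,v)=col(T',v')=:col$. Since $(T,v)\neq(T',v')$ the connecting flip sequence is non-empty, so $(T,v)$ lies in a non-trivial component and is not an isolated vertex, hence not alternating; as a colouring of a triangulated polygon uses $3$ or $4$ colours and uses exactly $3$ precisely when the triangulation is alternating, $col$ must use all four colours.

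\emph{Direction $(\Leftarrow)$: reformulation.} Put $col=col(T,v)=col(T',v')$; equality of colourings means $col$ is a proper vertex-colouring for both $T$ and $T'$. Call a triangulation $S$ of $P$ \emph{$col$-admissible} if $col$ is proper for $S$, i.e. no diagonal of $S$ has both ends equally coloured. For a diagonal $d=yt$ of a $col$-admissible $S$ sitting in a quadrilateral with outer vertices $x,z$, the edges of the two triangles on $d$ force $col(y)\neq col(t)$ and $col(x),col(z)\notin\{col(y),col(t)\}$, so the quadrilateral is rainbow exactly when $col(x)\neq col(z)$ — which is also exactly the condition that the flip of $d$ (introducing the diagonal $xz$) stays $col$-admissible. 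Hence, among $col$-admissible triangulations, $2$-coloured flips are precisely the ordinary flips that create no monochromatic chord, and $col$ is constant along any such sequence. So $(\Leftarrow)$ reduces to the purely combinatorial claim: \emph{if $col\colon V(P)\to\{a,b,c,d\}$ uses all four colours and is proper on the boundary, then all $col$-admissible triangulations of $P$ are connected by monochromatic-chord-avoiding flips.}

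\emph{Direction $(\Leftarrow)$: the induction.} I would prove the reformulated claim by induction on $n=|V(P)|$: $n=3$ is trivial, and for $n=4$ the hypothesis makes both diagonals rainbow, so one flip suffices. For $n\ge 5$ the strategy is to flip every $col$-admissible triangulation to one with a fixed ear and then delete that ear. First one shows $P$ has a boundary vertex $x$ with $col(x^-)\neq col(x^+)$ that is not the unique vertex of its colour (otherwise the constraints collapse $col$ onto at most three colours: ``$col(y^-)=col(y^+)$ for all $y$'' forces $col$ constant on each parity class of the boundary, and for $n\ge 5$ there are at least two colour-repeated vertices to exploit). Set $e=x^-x^+$, a $col$-admissible chord whose crossing diagonals in any $T$ are exactly the diagonals at $x$; one argues that any $col$-admissible $T$ can be flipped, staying $col$-admissible, into one containing $e$, i.e. with ear $(x^-,x,x^+)$. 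Deleting $x$ from such triangulations of $T$ and of $T'$ gives two $(col|_{P-x})$-admissible triangulations of $P-x$; as $x$ is colour-repeated, $col|_{P-x}$ still uses four colours, so by induction they are connected, and those flips lift to flips of $P$ fixing the ear. Composing everything yields $(T,v)\sim(T',v')$, and combined with $(\Rightarrow)$ and the equivalences $(T,s)\equiv(T,p)\equiv(T,v)\equiv(T,col)$ this is exactly~\cref{thm:equiv}, hence \cref{thm1}.

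\emph{Main obstacle.} The difficulty is entirely in the inductive step, in two places: (i) flipping an arbitrary $col$-admissible triangulation to one with a prescribed admissible ear using rainbow flips only — removing the diagonals at $x$ one at a time, the last flip produces $e$ and is rainbow because $col(x^-)\neq col(x^+)$, but the earlier flips must be shown never to attempt a monochromatic chord, which needs a careful choice of which diagonal at $x$ to flip next, with a fallback to pivoting to a neighbour of $x$ when the link of $x$ degenerates to a $2$-coloured alternating path; and (ii) guaranteeing that the ear-vertex $x$ can be taken colour-repeated, so that the four-colour hypothesis survives passing to $P-x$. Everything else is bookkeeping with Remark~\ref{rem:effect-flip} and the four equivalent descriptions of a signed triangulation.
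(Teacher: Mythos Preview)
Your forward direction and your reformulation of the backward direction in terms of $col$-admissible triangulations and rainbow (monochromatic-chord-avoiding) flips are correct and match the paper. Your inductive strategy---pick a suitable vertex $x$, flip to create the ear $x^-xx^+$, delete $x$, recurse---is also the paper's strategy, with one simplification you miss: the paper chooses $x$ to be a degree-$2$ vertex \emph{of $T$} (such that $T-x$ still uses four colours), so that $T$ already has the ear and one only needs to manipulate the equivalence class $[T',v']$. Your choice of $x$ by colour conditions alone forces you to do the ear-creation argument on both sides, and your existence argument for an $x$ that is simultaneously colour-repeated and has $col(x^-)\neq col(x^+)$ is not quite complete as stated.

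The genuine gap is exactly where you locate it: step (i), flipping an arbitrary $col$-admissible triangulation to one with the prescribed ear using only rainbow flips. Your sketch (``careful choice of which diagonal at $x$ to flip next, with a fallback to pivoting to a neighbour'') does not constitute a proof, and the obstruction is real: the link of $x$ can be a long path on which the colouring alternates between two colours, so that \emph{no} diagonal at $x$ is immediately rainbow-flippable. The paper handles precisely this via a tool you do not invoke, the weighting $p$ and \cref{lem:zeroweight} ($p(x)=0$ iff the two boundary neighbours of $x$ share a colour). Since $x$ is an ear of $T$, its boundary neighbours are adjacent and differently coloured, hence $p(x)\neq 0$, and this weight is the same in every triangulation of $[T',v']$. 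The paper then splits $[T',v']$ into $\mathcal{T}_1$ (some diagonal at $x$ has value $0$, i.e.\ is rainbow-flippable) and $\mathcal{T}_2$ (all diagonals at $x$ have value $1$, i.e.\ the star of $x$ is alternating), and uses minimality arguments---minimal degree of $x$ in $\mathcal{T}_1$, minimal size of the maximal alternating subpolygon containing $x$ in $\mathcal{T}_2$---together with $p(x)\neq 0$ to derive contradictions. This $\mathcal{T}_1/\mathcal{T}_2$ dichotomy driven by the weighting is the missing idea in your proposal; without it, the ``fallback'' you gesture at has no termination argument.
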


\begin{proof}[Proof of $\Longrightarrow$ of Theorem~\ref{thm:equiv}]
Using Remark~\ref{rem:effect-flip} one can see that a coloured flip does not change the colouring of the vertices. Iterating, we get that  
$(T,v)\sim (T',v')$ implies 
$col(T,v)=col(T',v')$. 
Since we assumed that the two triangulations are different, the sequence of signed flips needed to go from $(T,v)$ to $(T',v')$ is not empty, i.e. the flip graph is not a single point and there is at least one diagonal valued with $0$. Hence $col(T,v)$ uses four colours. 
\end{proof}

To prove the converse of the theorem, we first show three lemmas. We have to study the vertices of $P$ and their neighbours. In a triangulated polygon any vertex of $P$ has neighbours on the boundary and potentially neighbours through diagonals of the triangulation. When dealing with the former, we refer to them as neighbours along the boundary (or on the boundary). 

\begin{lem}\label{lem:samecolour}
    Let $(T,\varepsilon)$ be a signed triangulation of a polygon $P$ and $x$ a vertex of $P$. Assume that the two neighbours of $x$ along the boundary are the only two neighbours of $x$ with the same colour. 
    Then $x$ has 3 or 4 neighbours, and $p(x)=0$. 
\end{lem}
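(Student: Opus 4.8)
The plan is to work entirely with the local picture of $T$ around $x$. List the triangles of $T$ incident with $x$ as $\triangle_1,\dots,\triangle_k$ in cyclic order, and write $y_0,y_1,\dots,y_k$ for the corresponding neighbours of $x$ in $T$, so that $\triangle_i=xy_{i-1}y_i$, the edges $xy_0$ and $xy_k$ lie on the boundary of $P$, and $xy_1,\dots,xy_{k-1}$ are the diagonals of $T$ at $x$. The hypothesis then says $col(y_0)=col(y_k)$, while $col(y_i)\ne col(y_j)$ for every other pair $0\le i<j\le k$; in particular $y_0,\dots,y_{k-1}$ are pairwise distinctly coloured, and likewise $y_1,\dots,y_k$.

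First I would pin down the number of neighbours. Since each $y_i$ is adjacent to $x$ in the proper $4$-colouring $col$, every $y_i$ avoids the colour $col(x)$; hence the $k$ pairwise distinctly coloured vertices $y_0,\dots,y_{k-1}$ use at most $3$ colours, forcing $k\le 3$. Conversely $k\ge 2$: if $k=1$ then the only neighbours of $x$ would be $y_0,y_1$, which are adjacent in the triangle $\triangle_1$ and therefore have different colours, contradicting $col(y_0)=col(y_1)$. Thus $k\in\{2,3\}$, i.e.\ $x$ has $3$ or $4$ neighbours.

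To compute $p(x)=\sum_{i=1}^{k} s(\triangle_i)\bmod 3$, I would translate colour equalities into sign equalities using the two dictionaries established earlier. For an interior diagonal $xy_i$ (with $1\le i\le k-1$), the quadrilateral it determines has vertex set $\{x,y_{i-1},y_i,y_{i+1}\}$, with $y_{i-1}$ and $y_{i+1}$ opposite to the diagonal; since $col(x)$ already differs from its three neighbours and $col(y_{i-1})\ne col(y_i)\ne col(y_{i+1})$, the valuation--colouring correspondence gives $v(xy_i)=0$ exactly when $col(y_{i-1})\ne col(y_{i+1})$, and the sign--valuation correspondence gives $v(xy_i)=0$ exactly when $s(\triangle_i)=s(\triangle_{i+1})$. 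If $k=2$, the single diagonal $xy_1$ has $col(y_0)=col(y_2)$, so $s(\triangle_1)=-s(\triangle_2)$ and $p(x)=s(\triangle_1)+s(\triangle_2)=0$. If $k=3$, the pairs $(y_0,y_2)$ and $(y_1,y_3)$ are both among the ``other'' pairs, hence distinctly coloured, so $v(xy_1)=v(xy_2)=0$, giving $s(\triangle_1)=s(\triangle_2)=s(\triangle_3)$ and $p(x)=3s(\triangle_1)\equiv 0\pmod 3$. In either case $p(x)=0$.

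I do not anticipate a serious obstacle; once the local fan is set up the argument is bookkeeping, and the answer is visibly independent of the choice between $s$ and $\overline{s}$ since $0=-0$. The points needing care are checking that the quadrilateral around a diagonal $xy_i$ has precisely $y_{i-1}$ and $y_{i+1}$ as its remaining vertices, composing the sign--valuation and valuation--colouring correspondences in the right direction, and not overlooking the degenerate case $k=1$, whose exclusion is exactly what makes the lower bound ``$3$ neighbours'' sharp.
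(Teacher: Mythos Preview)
Your proof is correct and follows essentially the same approach as the paper: both bound the degree of $x$ via the pigeonhole constraint that neighbours of $x$ use only three colours, then treat the $3$-neighbour and $4$-neighbour cases separately by translating colour equalities of opposite vertices into valuations of the diagonals at $x$, and hence into sign (dis)agreements of the incident triangles. Your write-up is in fact a bit more explicit about the sign--valuation--colouring dictionary than the paper's, but the argument is the same.
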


\begin{proof}
    Clearly, $x$ cannot have only 2 neighbours in this case as in that case, these would belong to a common triangle with $x$. 
    
    Suppose for contradiction that the vertex $x$ has at least five neighbours. Then the two neighbours on the polygon are not the only two neighbours of the same colour in $T$: We can only colour three neighbours of $x$ with distinct colours (different from the colour of $x$). And we would have at least three vertices of the same colour or another pair of neighbours with the same colour. Hence $x$ cannot have more than 4 neighbours.

    In case $x$ has three neighbours, these four vertices span a quadrilateral (with $x$) and the diagonal ending at $x$ has value $1$ as the other end must be of a different colour. In particular, the two triangles incident with $x$ have opposite sign and $x$ has weight $0$. 

    In case $x$ has four neighbours, the two neighbours which are linked to $x$ by diagonals must be of two different colours which are also different from the colour of $x$. In particular, both these diagonals have value $0$. Therefore, $x$ is incident with three triangles of the same sign and the weight $p(x)$ is $0$ (mod $3$). 
\end{proof}

\begin{lem}\label{lem:notsamecolour}
    Let $(T,\varepsilon)$ be a signed triangulation of $P$ and $x$ a vertex of $P$. If $x$ has no two neighbours of the same colour, then $x$ has 2 or 3 neighbours and the weight $p(x)$ of $x$ is not $0$. 
\end{lem}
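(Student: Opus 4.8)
The plan is to read off the local structure around $x$ from a $4$-colouring $col$ compatible with $(T,\varepsilon)$ and then translate back to signs via the dictionary $(T,col)\equiv(T,v)\equiv(T,s)$ recalled above. List the neighbours of $x$ in cyclic order as $y_0,y_1,\dots,y_{d-1}$, with $y_0,y_{d-1}$ the two neighbours along the boundary; then $x$ lies in exactly the triangles $xy_0y_1,\,xy_1y_2,\,\dots,\,xy_{d-2}y_{d-1}$, and $d\ge 2$ always, since the two boundary neighbours of $x$ are neighbours of $x$.

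First I would bound the degree. Since $col$ is a proper colouring with four colours, every $y_i$ carries one of the three colours distinct from $col(x)$; by hypothesis the $y_i$ are pairwise distinctly coloured, so $d\le 3$, and hence $x$ has $2$ or $3$ neighbours. Next I would compute $p(x)$, the weight obtained from $\sum_{x\in\triangle}s(\triangle)\bmod 3$, in each case. If $d=2$ then $x$ lies in the single triangle $xy_0y_1$, and the sum of signs over the triangles at $x$ is $s(xy_0y_1)=\pm1\not\equiv 0\pmod 3$, so $p(x)\ne 0$. If $d=3$ then $xy_1$ is a diagonal of $T$ (it is not a boundary edge, since $y_0$ and $y_2$ are the boundary neighbours of $x$), and it is the common diagonal of the quadrilateral on the four vertices $y_0,x,y_2,y_1$. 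The two vertices $y_0$ and $y_2$ opposite across this quadrilateral are both neighbours of $x$ and so are coloured differently, while all the other pairs among $y_0,x,y_2,y_1$ are coloured differently because they are joined by edges of $T$; hence all four vertices of the quadrilateral get distinct colours. By the valuation–colouring correspondence this forces $v(xy_1)=0$, i.e. the two triangles $xy_0y_1$ and $xy_1y_2$ incident with $x$ have the same sign, so the sum of signs over the triangles at $x$ is $\pm2\not\equiv 0\pmod 3$ and again $p(x)\ne 0$.

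This is mostly bookkeeping, parallel to the proof of \cref{lem:samecolour}; the only delicate point is the $d=3$ case, where one must correctly identify the quadrilateral cut out by the diagonal $xy_1$ and invoke the precise form of the valuation/colouring dictionary (equivalently, of $v\leftrightarrow s$) to conclude that the diagonal at $x$ is valued $0$. One could instead argue directly that any diagonal both of whose incident triangles contain $x$ must be valued $0$ under the hypothesis, avoiding colourings altogether, but the route through $col$ fits the ambient setup most smoothly.
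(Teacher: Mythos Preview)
Your proof is correct and follows essentially the same approach as the paper: bound the degree by pigeonhole (at most three colours available for the neighbours), dispatch the degree-$2$ case trivially, and in the degree-$3$ case argue that the unique diagonal at $x$ has valuation $0$ because the four vertices of its quadrilateral carry four distinct colours, whence the two triangles at $x$ share a sign and $p(x)\equiv\pm2\not\equiv0\pmod 3$. Your write-up is a touch more explicit in naming the neighbours cyclically, but the structure and the key observation are identical.
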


\begin{proof}
    It is clear that $x$ can only have 2 or 3 neighbours as if there are more, there would be at least two of them with the same colour. 

    In case $x$ has only two neighours, it is incdent with only one triangle and so $p(x)$ is $1$ or $2$ (mod $3$).  
    
    So assume that $x$ has three neighbours. In the quadrilateral spanned by $x$ and its three neighbours, $T$ has a diagonal connecting $x$ with the fourth vertex, say $y$. The vertices $x$ and $y$ have to be of different colour and so all four vertices of this quadrilateral are of different colours. Hence the diagonal $xy$ has value $0$. So the two triangles at $x$ are of the same sign and the weight $p(x)$ is in $\{1,2\}$ $\mod 3$. 
\end{proof}    


\begin{lem}\label{lem:zeroweight}
Let $(T, \varepsilon)$ be a signed triangulation of a polygon $P$. Let $x$ be a vertex of $P$. Then $p(x)=0$ if and only if its two neighbours on $P$ have the same colour.
\end{lem}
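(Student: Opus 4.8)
The plan is to prove the equivalence by a direct computation of $p(x)\bmod 3$ in terms of the colours of the neighbours of $x$, working in $\mathbb{Z}/3\mathbb{Z}$; this will in fact subsume \cref{lem:samecolour} and \cref{lem:notsamecolour}.

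\textbf{Setup.} Fix a $2$-colouring $s$ realising $\varepsilon$ and let $col$ be the associated $4$-colouring of $P$. Identify the three colours different from the colour of $x$ with the elements of $\mathbb{Z}/3\mathbb{Z}$. List the neighbours of $x$ in cyclic order around $x$ as $y_0,y_1,\dots,y_k$, chosen so that $y_0$ and $y_k$ are the two neighbours of $x$ along the boundary of $P$; then the triangles incident with $x$ are exactly $T_i = xy_{i-1}y_i$ for $i=1,\dots,k$ (with $k\ge 1$), and $T_i,T_{i+1}$ share the diagonal $xy_i$ for $1\le i\le k-1$. Put $c_i = col(y_i)\in\mathbb{Z}/3\mathbb{Z}$. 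Since $y_{i-1}y_i$ is an edge of $T$, adjacent vertices get different colours, so $\delta_i := c_i - c_{i-1}\in\{1,-1\}\bmod 3$ for $1\le i\le k$.

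\textbf{Step 1 (local sign rule).} For $1\le i\le k-1$, the diagonal $xy_i$ is a diagonal of the quadrilateral $xy_{i-1}y_iy_{i+1}$, so by the equivalence of valuation and colouring recalled above it has value $0$ if and only if the four vertices $x,y_{i-1},y_i,y_{i+1}$ carry four distinct colours, which (given $c_{i-1}\ne c_i$ and $c_i\ne c_{i+1}$) is equivalent to $c_{i-1}\ne c_{i+1}$. A value-$0$ diagonal is precisely one whose two incident triangles have equal sign, so $s(T_{i+1}) = s(T_i)$ iff $c_{i-1}\ne c_{i+1}$, i.e. iff $\delta_{i+1}=\delta_i$, while $s(T_{i+1}) = -s(T_i)$ iff $\delta_{i+1}=-\delta_i$. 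In both cases $s(T_{i+1})/s(T_i) = \delta_i\delta_{i+1}$ (reducing the product $\bmod 3$).

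\textbf{Step 2 (telescoping) and conclusion.} Iterating Step 1, $s(T_i) = s(T_1)\prod_{l=1}^{i-1}\delta_l\delta_{l+1} = s(T_1)\,\delta_1\delta_i$, since each interior factor satisfies $\delta_l^2 = 1$. Summing over the $k$ triangles at $x$, $p(x)\equiv \sum_{i=1}^{k}s(T_i)\equiv s(T_1)\,\delta_1\sum_{i=1}^{k}\delta_i\pmod 3$, and $\sum_{i=1}^{k}\delta_i = \sum_{i=1}^{k}(c_i-c_{i-1}) = c_k - c_0\pmod 3$. As $s(T_1)\delta_1\in\{1,-1\}$ is a unit mod $3$, we get $p(x)\equiv 0\pmod 3$ if and only if $c_k = c_0$, i.e. if and only if the two boundary neighbours $y_0,y_k$ of $x$ have the same colour. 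The case $k=1$ (one triangle, $p(x)=\pm1\ne 0$, and $c_1\ne c_0$) is covered by the same formula.

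\textbf{Main obstacle.} There is no deep difficulty; the care is entirely bookkeeping: checking that the cyclic listing of the neighbours of $x$ has the boundary neighbours as its two endpoints, and invoking the valuation/colouring equivalence locally at each diagonal $xy_i$ (legitimate, since the rule ``value $0$ $\Leftrightarrow$ four distinct colours on the quadrilateral'' holds quadrilateral-by-quadrilateral). Once the increments $\delta_i\in\{1,-1\}\subset\mathbb{Z}/3\mathbb{Z}$ are in place, the proof is the one-line telescoping above. One could alternatively try to assemble \cref{lem:samecolour} and \cref{lem:notsamecolour}, but a vertex $x$ whose boundary pair has equal colour while some non-boundary pair also has equal colour falls outside both hypotheses, and handling it seems to require exactly this direct count.
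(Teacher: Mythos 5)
Your computation is correct, and it takes a genuinely different route from the paper. The paper proves \cref{lem:zeroweight} by induction on the degree of $x$: \cref{lem:samecolour} and \cref{lem:notsamecolour} settle degrees $2$ and $3$ (and the special degree-$4$ case), and for higher degree it locates a pair $y_i,y_j$ of equally coloured neighbours with no repetition between them, shows that the triangles of the subpolygon $x,y_i,\dots,y_j$ contribute $0$ to $p(x)$, identifies $y_i$ with $y_j$ to reduce the degree, and iterates. Your argument replaces this reduction by a single telescoping identity in $\mathbb{Z}/3\mathbb{Z}$: the only input is the local rule relating the valuation of $x y_i$ to the colours on the quadrilateral $x y_{i-1} y_i y_{i+1}$ (exactly the $(T,v)\equiv(T,col)$ equivalence that the paper's \cref{lem:samecolour} and \cref{lem:notsamecolour} also rely on), from which $s(T_i)=s(T_1)\,\delta_1\delta_i$ and hence $p(x)\equiv s(T_1)\,\delta_1\,(c_k-c_0)$, with $s(T_1)\delta_1$ a unit mod $3$. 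This is uniform in the degree, needs no induction or case split, and in fact yields the sharper statement that $p(x)$ equals $\pm(col(y_k)-col(y_0))$ under the identification of the three colours $\ne col(x)$ with $\mathbb{Z}/3\mathbb{Z}$, which subsumes \cref{lem:samecolour} and \cref{lem:notsamecolour} rather than quoting them. Your closing remark is also accurate: those two lemmas do not by themselves cover a vertex whose boundary pair is equally coloured while some non-adjacent pair of diagonal neighbours is too, which is precisely why the paper needs its extra reduction step; your direct count handles that case with no additional work. The one point to keep explicit is that Step 1 invokes the quadrilateral-by-quadrilateral compatibility of $col$ with $v$, i.e.\ the global consistency of the colouring constructed in the equivalence $(T,v)\equiv(T,col)$ — but the paper's own proof uses the same fact, so you are on equal footing there.
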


\begin{proof} 
    It is enough to consider the full subgraph of the triangulated polygon induced by $x$ (it consists of $x$, of all vertices connected with $x$ and of all boundary edges and diagonals connecting them). 
    The idea is to use induction on the degree of the vertex $x$. 

    (1) If $x$ has no two neighbours of the same colour, then $x$ has degree $2$ or $3$ and  $p(x)\ne 0$ by Lemma~\ref{lem:notsamecolour}. 

    (2) If the two neighbours of $x$ on the polygon are the only neighbours of $x$ with the same colour, then $x$ has degree $3$ or $4$ and $p(x)=0$ by Lemma~\ref{lem:samecolour}. 
    With (1) and (2) we have covered all cases where $x$ has degree $2$ or $3$ (in degree $3$, if there are vertices of the same colour among the neighbours of $x$, they have to be on the boundary, for a colouring to be valid). 

    So the result holds for vertices $x$ of degree $\le 3$. 
    
    (3) It remains to check the general situation. Let $y_1,y_2,\dots, y_r$ be the neighbours of $x$, with $y_1$ and $y_r$ being along the boundary and where $r\ge 4$. See left hand picture of Figure~\ref{fig:colour-neighbours}

    Since $r\ge 4$, there are vertices among the $y_i$ of the same colour. Pick $y_i$, $y_j$, $i<j-1$ of the same colour such that there are no two vertices of the same colour among $y_{i+1},\dots, y_{j-1}$. Consider the triangulated subpolygon on the vertices $x,y_i,y_{i+1},\dots, y_j$. Using the same argument as in Lemma~\ref{lem:samecolour}, we find that either $j=i+2$ or $j=i+3$ and that the triangles incident with $x$ and that $p(x)=0$ in this subpolygon (there are either two triangles of opposite signs or three triangles of the same sign). 

    We then identify $y_i$ with $y_j$, getting a new polygon $P'$, reducing the degree of $x$ in it, see right hand side of Figure~\ref{fig:colour-neighbours}. So in the polygon $P'$, the weight of $x$ is $0$ if and only if the two neigbhours $y_1$ and $y_r$ on the boundary have the same colour. 
    Since the region between $y_i$ and $y_j$ contributes by $0$ to the weight, the claim holds. 
    

\end{proof}

\begin{figure}
    \centering
    \includegraphics[scale=.8]{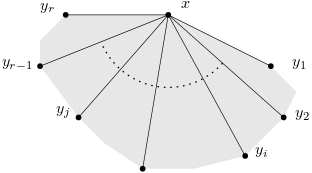} 
    \hskip 1.5cm \includegraphics[scale=.8]{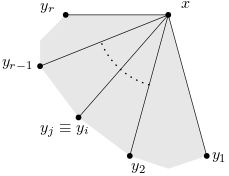} \caption{The neighbourhood of $x$ in $P$ and in $P'$}
    \label{fig:colour-neighbours}
\end{figure}

\begin{proof}[Proof of $\Longleftarrow$ of Theorem~\ref{thm:equiv}]
Assume that there exists a polygon $P$ and two triangulations $(T,v)$ and $(T'v')$ of $P$ which provide a counterexample. Let $P$ be minimal with this property. The polygon $P$ has at least $5$ vertices (one can check that the theorem is true for $4$ vertices). So $col(T,v)=col(T',v')$, this 
colouring uses all four colours, and there is no sequence of signed flips between these two signed triangulations. 
Among the vertices of $T$ of degree 2 we choose a vertex $x$ with the property that $T-x$ (the triangulated polygon without the triangle at $x$) is still coloured with four colours. Such a vertex always exists as $P$ has at least $5$ vertices and among them at least two vertices of degree $2$. At least one of them satisfies this condition (if one removes a degree 2 vertex $y$ and the remaining colouring only uses $3$ colours, one replaces $y$ by another degree $2$ vertex in $T$). 
Since all four colours are present in $T-x$, there exists a diagonal with valuation $0$. 

If there exists a signed triangulation $T''$ in the equivalence class $[T',v']$ where $x$ has degree $2$, then by minimality of the size of $P$, we know that for the polygon $P-x$ we have $(T-x,v)\sim (T''-x,v'')$. But then 
$(T,v)\sim (T'',v'')$ and the latter is in the equivalence class of $(T',v')$, so $(T,v)\sim (T',v')$, a contradiction. 

So we can assume that $x$ has degree $\ge 3$ in every triangulation in $[T',v']$. 

We partition this equivalence class into two sets $\mathcal T_1$ and $\mathcal T_2$. We will show that these are both empty, thus proving that no counter-example to the implication $\Leftarrow$ exists. 

We define $\mathcal T_1$ to be the set of all signed triangulations in $[T',v']$ having a diagonal of value $0$ incident with $x$. The set $\mathcal T_2$ are the ones where every diagonal at $x$ has value $1$. These are the signed triangulations which are alternating on the subpolygon induced by $x$ and all its neighbours in $T'$. 
(Since the degree of $x$ is at least $3$ for any signed triangulation in $[T',v']$, there is always at least one diagonal at $x$).

\noindent
\underline{Claim: $\mathcal T_1$ is empty:} \\ 
From the elements of $\mathcal{T}_1$ choose a signed triangulation $(T'',v'')$ where $x$ has minimal degree (this degree is $\ge 3$ as we have seen). The two neighbours of $x$ (along the boundary of the polygon) are adjacent in $T$ (as $x$ has degree $2$ in $T$) and so have different colour. By Lemma~\ref{lem:zeroweight}, this means that $p''(x)\ne 0$, where $p''$ is the weighting of $(T'',v'')$. This weighting is the same as that of $(T',v')$ and as that of $(T,v)$ as their colourings are the same. 
If there is a diagonal of value $1$ incident with $x$, say $xy_k$ (for some $k$), we flip a diagonal with value $0$ next to this diagonal. Then the diagonal $xy_k$ has value $0$. In this new triangulation, the degree of $x$ has gone down by one and we reach a contradiction. 
So all diagonals at $x$ must have value $0$. We flip the first such diagonal at $x$ (e.g. going clockwise through these diagonals). The result is a triangulation where either $x$ has degree $2$ (contradicting that the vertex $x$ has degree $>2$ for all elements of $[T',v']$) or it has degree $3$ and no diagonal of value $0$ incident with it, implying that $p''(x)=0$ (a contradiction to $p''(x)\ne 0$) or the resulting triangulation is an element of $\mathcal T_1$ where $x$ has smaller degree. 
Figure~\ref{fig:exampleofdegree3} illustrates the first two of these cases. 
In all three cases, this leads to a contradiction. 
Therefore, $\mathcal T_1$ is empty.

\noindent
\underline{Claim: $\mathcal T_2$ is empty:}\\
Recall that the signed triangulation of the subpolygon induced by $x$ and its neighbours in $T'$ is alternating (all diagonals at $x$ have value $1$). For any $(Q,\varepsilon)$ an element of $\mathcal T_2$, we write $P(Q)$ the maximal alternating subpolygon (maximal by inclusion) which contains $x$ and its neighbours. 
Let $(T'',v'')$ be an element of $\mathcal T_2$ which minimizes the size of $P(T'')$. 
As $P(T'')$ is maximal as alternating signed polygon, the boundary edges of $P(T'')$ which are diagonals in the original triangulated polygon have to have value $0$. Since  
$col(T'',v'')=col(T',v')$ and all four colours appear, there exists at least one diagonal of value $0$ (so such a boundary edge of $P(T'')$ has to exist). 
If we flip this diagonal, we obtain a new triangulation $S$. 
If this diagonal is incident with two edges (two diagonals or one diagonal and a boundary edge) at $x$, $S$ belongs to $\mathcal T_1$ (see Figure~\ref{fig:exampleofT_1} for an illustration). However, $\mathcal T_1$ is empty. 

Otherwise, $S$ belongs to $\mathcal T_2$ with $P(S)$ smaller than $P(T'')$ (see Figure~\ref{fig:exampleofT_2} for an illustration), also a contradiction. 
So $\mathcal T_2$ is empty. 
\end{proof}

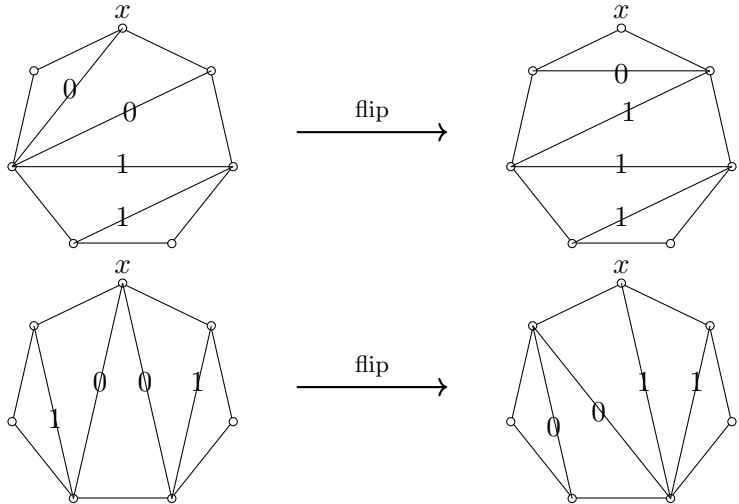
\begin{figure}[h!]
    \centering
    \begin{tikzpicture}[baseline]
        \node[draw, regular polygon, regular polygon sides=7, minimum size=3cm] (a) {};
        \foreach \i in {1,2,...,7}
            \node[circle, draw, fill=white, fill opacity=1, scale=0.3] at (a.corner \i) {};
        \node[above] at (a.corner 1) {$x$};
        \draw (a.corner 1) edge (a.corner 3);
        \draw (a.corner 3) edge (a.corner 7);
        \draw (a.corner 3) edge (a.corner 6);
        \draw (a.corner 4) edge (a.corner 6);
        \node at (-.7,.7) {0};
        \node at (.1,.4) {0};
        \node at (0,-.3) {1};
        \node at (0,-1) {1};
    \end{tikzpicture}\qquad
    \begin{tikzpicture}
        \draw [->, thick] (2,6) -- (4,6) node [pos=0.5,above,font=\footnotesize] {flip};
    \end{tikzpicture}\qquad
    \begin{tikzpicture}[baseline]
        \node[draw, regular polygon, regular polygon sides=7, minimum size=3cm] (a) {};
        \foreach \i in {1,2,...,7}
            \node[circle, draw, fill=white, fill opacity=1, scale=0.3] at (a.corner \i) {};
        \node[above] at (a.corner 1) {$x$};
        \draw (a.corner 2) edge (a.corner 7);
        \draw (a.corner 3) edge (a.corner 7);
        \draw (a.corner 3) edge (a.corner 6);
        \draw (a.corner 4) edge (a.corner 6);
        \node at (0,.9) {0};
        \node at (.1,.4) {1};
        \node at (0,-.3) {1};
        \node at (0,-1) {1};
    \end{tikzpicture}\\
    \begin{tikzpicture}[baseline]
        \node[draw, regular polygon, regular polygon sides=7, minimum size=3cm] (a) {};
        \foreach \i in {1,2,...,7}
            \node[circle, draw, fill=white, fill opacity=1, scale=0.3] at (a.corner \i) {};
        \node[above] at (a.corner 1) {$x$};
        \draw (a.corner 1) edge (a.corner 4);
        \draw (a.corner 1) edge (a.corner 5);
        \draw (a.corner 2) edge (a.corner 4);
        \draw (a.corner 5) edge (a.corner 7);
        \node at (1,.2) {1};
        \node at (.3,.2) {0};
        \node at (-.3,.2) {0};
        \node at (-.9,-.3) {1};
    \end{tikzpicture}\qquad
    \begin{tikzpicture}
        \draw [->, thick] (2,6) -- (4,6) node [pos=0.5,above,font=\footnotesize] {flip};
    \end{tikzpicture}\qquad
    \begin{tikzpicture}[baseline]
        \node[draw, regular polygon, regular polygon sides=7, minimum size=3cm] (a) {};
        \foreach \i in {1,2,...,7}
            \node[circle, draw, fill=white, fill opacity=1, scale=0.3] at (a.corner \i) {};
        \node[above] at (a.corner 1) {$x$};
        \draw (a.corner 2) edge (a.corner 5);
        \draw (a.corner 1) edge (a.corner 5);
        \draw (a.corner 2) edge (a.corner 4);
        \draw (a.corner 5) edge (a.corner 7);
        \node at (1,.2) {1};
        \node at (.3,.2) {1};
        \node at (-.3,-.2) {0};
        \node at (-.9,-.4) {0};
    \end{tikzpicture}
    \caption{Examples where $x$ becomes a vertex of degree $2$ respectively of degree $3$.}
    \label{fig:exampleofdegree3}
\end{figure}

\begin{figure}[h!]
    \centering
    \begin{tikzpicture}[baseline]
        \node[draw, regular polygon, regular polygon sides=7, minimum size=3cm] (a) {};
        \foreach \i in {1,2,...,7}
            \node[circle, draw, fill=white, fill opacity=1, scale=0.3] at (a.corner \i) {};
        \node[above] at (a.corner 1) {$x$};
        \draw (a.corner 2) edge (a.corner 6);
        \draw (a.corner 1) edge (a.corner 6);
        \draw (a.corner 2) edge (a.corner 5);
        \draw (a.corner 3) edge (a.corner 5);
        \node at (.7,.6) {1};
        \node at (.3,.2) {0};
        \node at (-.3,-.2) {0};
        \node at (-.3,-.9) {0};
    \end{tikzpicture}\qquad
    \begin{tikzpicture}
        \draw [->, thick] (2,6) -- (4,6) node [pos=0.5,above,font=\footnotesize] {flip};
    \end{tikzpicture}\qquad
    \begin{tikzpicture}[baseline]
        \node[draw, regular polygon, regular polygon sides=7, minimum size=3cm] (a) {};
        \foreach \i in {1,2,...,7}
            \node[circle, draw, fill=white, fill opacity=1, scale=0.3] at (a.corner \i) {};
        \node[above] at (a.corner 1) {$x$};
        \draw (a.corner 1) edge (a.corner 5);
        \draw (a.corner 1) edge (a.corner 6);
        \draw (a.corner 2) edge (a.corner 5);
        \draw (a.corner 3) edge (a.corner 5);
        \node at (.7,.6) {0};
        \node at (.3,.2) {0};
        \node at (-.3,-.2) {1};
        \node at (-.3,-.9) {0};
    \end{tikzpicture}
    \caption{Example with $S \in \mathcal{T}_1$}
    \label{fig:exampleofT_1}
\end{figure}

\begin{figure}[h!]
    \centering
    \begin{tikzpicture}[baseline]
            \node[draw, regular polygon, regular polygon sides=7, minimum size=3cm] (a) {};
            \foreach \i in {1,2,...,7}
                \node[circle, draw, fill=white, fill opacity=1, scale=0.3] at (a.corner \i) {};
            \node[above] at (a.corner 1) {$x$};
            \draw[ultra thick] (a.corner 3) edge (a.corner 6);
            \draw[ultra thick] (a.corner 1) edge (a.corner 2);
            \draw[ultra thick] (a.corner 2) edge (a.corner 3);
            \draw[ultra thick] (a.corner 1) edge (a.corner 7);
            \draw[ultra thick] (a.corner 7) edge (a.corner 6);
            \draw (a.corner 1) edge (a.corner 6);
            \draw (a.corner 2) edge (a.corner 6);
            \draw (a.corner 3) edge (a.corner 6);
            \draw (a.corner 3) edge (a.corner 5);
            \node at (.7,.6) {1};
            \node at (.2,.3) {1};
            \node at (0,-.3) {0};
            \node at (-.3,-.9) {0};
        \end{tikzpicture}\qquad
        \begin{tikzpicture}
            \draw [->, thick] (2,6) -- (4,6) node [pos=0.5,above,font=\footnotesize] {flip};
        \end{tikzpicture}\qquad
        \begin{tikzpicture}[baseline]
            \node[draw, regular polygon, regular polygon sides=7, minimum size=3cm] (a) {};
            \foreach \i in {1,2,...,7}
                \node[circle, draw, fill=white, fill opacity=1, scale=0.3] at (a.corner \i) {};
            \node[above] at (a.corner 1) {$x$};
            \draw[ultra thick] (a.corner 2) edge (a.corner 6);
            \draw[ultra thick] (a.corner 1) edge (a.corner 2);
            \draw[ultra thick] (a.corner 1) edge (a.corner 7);
            \draw[ultra thick] (a.corner 7) edge (a.corner 6);
            \draw (a.corner 1) edge (a.corner 6);
            \draw (a.corner 2) edge (a.corner 6);
            \draw (a.corner 2) edge (a.corner 5);
            \draw (a.corner 3) edge (a.corner 5);
            \node at (.7,.6) {1};
            \node at (.2,.3) {0};
            \node at (-.2,-.3) {0};
            \node at (-.5,-.8) {1};
        \end{tikzpicture}
    \caption{Example with $S \in \mathcal{T}_2$}
    \label{fig:exampleofT_2}
\end{figure}
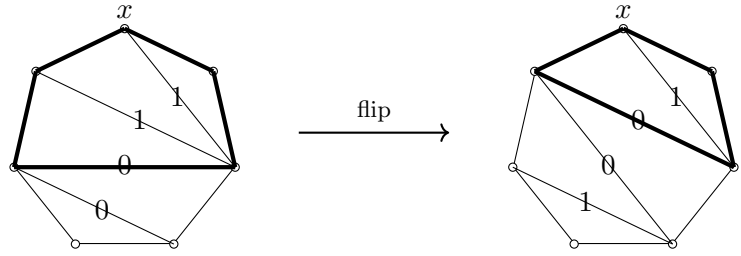

%
\section{Connected components of flip graphs}\label{app:comps}

In this appendix, we describe the connected components of the 2-coloured flip graphs of $P_{n+2}$ for $n\le 6$. We omit the isolated vertices. 

\begin{itemize}
    \item \emph{n = 2}. There is only one type of (non-trivial) connected components.
$$
\xymatrix@=2mm@R=-1mm{ 
\circ\ar@{-}[rr] &&\circ 
}
$$
    \item \emph{n = 3}. There is only one type of connected components.
$$
\xymatrix@=2mm@R=-1mm{ 
\circ\ar@{-}[rr] &&\circ \ar@{-}[rr]  && \circ
}
$$
    \item \emph{n = 4}. There are four different shapes of connected components.
$$
\xymatrix@=2mm@R=-1mm{ 
\circ\ar@{-}[rr] &&\circ \ar@{-}[rr] && \circ \ar@{-}[rr] && \circ \ar@{-}[rr] && \circ
} \qquad \xymatrix@=2mm@R=-1mm{ 
\circ\ar@{-}[rr] &&\circ \ar@{-}[rr] && \circ \ar@{-}[rr] && \circ
}
$$ 
$$
\xymatrix@=2mm@R=-1mm{ 
 &&& \circ\ar@{-}[rd] \\
\circ\ar@{-}[rr] && \circ\ar@{-}[dr]\ar@{-}[ru] && 
 \circ\ar@{-}[rr] && \circ\\
&&& \circ\ar@{-}[ru]
} \qquad \xymatrix@=2mm@R=1mm{ 
 & \circ\ar@{-}[d] \\
\circ\ar@{-}[r] & \circ \ar@{-}[r] & \circ 
}
$$ 
\item \emph{n = 5}. 
The seven shapes of the different connected components are: 
$$
\xymatrix@=2mm@R=2mm{ 
\circ\ar@{-}[d] \\ 
\circ\ar@{-}[d] \ar@{-}[r] & \circ\ar@{-}[d] \\ 
\circ\ar@{-}[d] \ar@{-}[r] & \circ\ar@{-}[d]\ar@{-}[r] & \circ\ar@{-}[d] \\ 
\circ\ar@{-}[r] & \circ\ar@{-}[r] & \circ\ar@{-}[r] & \circ \ar@{-}[r] & \circ \ar@{-}[r] & \circ
} \qquad
\xymatrix@=2mm@R=2mm{ 
\circ\ar@{-}[d] \\ 
\circ\ar@{-}[d] \\ 
\circ\ar@{-}[d] \ar@{-}[r] & \circ\ar@{-}[d] \\ 
\circ\ar@{-}[d] \ar@{-}[r] & \circ\ar@{-}[d]\ar@{-}[r] & \circ\ar@{-}[d] \\ 
\circ\ar@{-}[r] & \circ\ar@{-}[r] & \circ\ar@{-}[r] & \circ \ar@{-}[r] & \circ
} \qquad \xymatrix@=2mm@R=2mm{ 
\circ\ar@{-}[d] \\ 
\circ\ar@{-}[d] \ar@{-}[r] & \circ\ar@{-}[d] \\ 
\circ\ar@{-}[d] \ar@{-}[r] & \circ\ar@{-}[d]\ar@{-}[r] & \circ\ar@{-}[d] \\ 
\circ\ar@{-}[r] & \circ\ar@{-}[r] & \circ\ar@{-}[r] &  \circ
}
$$ 
$$
\xymatrix@=4mm@R=-1mm{ 
\circ\ar@{-}[r] &\circ \ar@{-}[r] & \circ \ar@{-}[r] & \circ \ar@{-}[r] & \circ \ar@{-}[r] & \circ \ar@{-}[r] 
& \circ \ar@{-}[r] & \circ \ar@{-}[r] & \circ
} \qquad \xymatrix@=2mm@R=1mm{ 
\circ\ar@{-}[r] & \circ \ar@{-}[r] & \circ \ar@{-}[r] & \circ \ar@{-}[r] & \circ 
}
$$
$$
\xymatrix@=2mm@R=2mm{ 
\circ\ar@{-}[d] \\ 
\circ\ar@{-}[d] \ar@{-}[r] & \circ\ar@{-}[d] \\ 
\circ\ar@{-}[d] \ar@{-}[r] & \circ\ar@{-}[d]\ar@{-}[r] & \circ \\ 
\circ\ar@{-}[r]\ar@{-}[d] & \circ \\
\circ
} \qquad \xymatrix@=2mm@R=1mm{ 
\circ\ar@{-}[d] \\
\circ\ar@{-}[d]\ar@{-}[r] & \circ \ar@{-}[r] & \circ \ar@{-}[r] & \circ \\
\circ 
}
$$ 
\end{itemize}

\begin{itemize}
\item \emph{n = 6}. 
The 26 shapes of the different connected components are: 
$$
\xymatrix@=2mm@R=2mm{ 
 & & & & \circ\ar@{-}[ddd]\ar@/^2pc/@{-}[rrrrddd]   \\
 & \\ 
 & \circ\ar@{-}[rd] && \circ\ar@{-}[ld]\ar@{-}[rd] && \circ \ar@{-}[ld]\ar@{-}[rd]&& \circ\ar@{-}[ld] \\
  \circ\ar@/^2pc/@{-}[rrrruuu]\ar@{-}[rrrrdddd]\ar@{-}[dd]\ar@{-}[rrrd] && \circ\ar@{-}[rd] && 
   \circ\ar@{-}[ld]\ar@{-}[rd] && \circ\ar@{-}[ld] && \circ\ar@{-}[lllldddd]\ar@{-}[dd]\ar@{-}[llld] \\
&  &  & \circ\ar@{-}[rd]  & & \circ\ar@{-}[ld] \\ 
\circ\ar@{-}[rrrddd] &&&& \circ &&&& \circ\ar@{-}[lllddd] \\
 & \\
 &&&& \circ\ar@{-}[ld]\ar@{-}[rd] &&&& \\
 &&& \circ\ar@{-}[dr] && \circ\ar@{-}[ld] \\
 &&&& \circ\ar@{-}[d] \\
 &&&& \circ\ar@{-}[ld]\ar@{-}[rd] \\
 &&& \circ && \circ 
 } \qquad 
 \xymatrix@=2mm@R=2mm{ 
 &&&& \circ\ar@{-}[ld]\ar@{-}[rd] \\
   &&&  \circ\ar@{-}[ld]\ar@{-}[rd]\ar@/_6pc/@{-}[dddd] & & 
    \circ\ar@{-}[rd]\ar@{-}[ld]\ar@/^6pc/@{-}[dddd] &   \\
   && \circ\ar@{-}[ld]\ar@{-}[rd] & & \circ\ar@{-}[rd]\ar@{-}[ld]\ar@{-}[dd] &&\circ\ar@{-}[rd]\ar@{-}[ld]  & \\
  \circ & \circ\ar@{-}[l] && \circ & & \circ &&\circ \ar@{-}[r] &\circ  \\
 &&&& \circ\ar@{-}[ld]\ar@{-}[rd] &&&&\\
  &&& \circ\ar@{-}[ld]\ar@{-}[rd] && \circ \ar@{-}[ld]\ar@{-}[rd]& \\
 && \circ\ar@{-}[rd] && 
   \circ\ar@{-}[ld]\ar@{-}[rd] && \circ\ar@{-}[ld] \\
  &&  & \circ\ar@{-}[rd]  & & \circ\ar@{-}[ld] \\ 
&&&& \circ\ar@{-}[d] &&& \\
 &&&& \circ \\
 }
 $$
 
$$ \xymatrix@=2mm@R=2mm{ 
& & & & \circ\ar@{-}[d]  \\ 
\circ\ar@{-}[r] & \circ\ar@{-}[r] & \circ\ar@{-}[r] & \circ \ar@{-}[r] & \circ\ar@{-}[r] & \circ\ar@{-}[r] 
& \circ\ar@{-}[r] & \circ\ar@{-}[r] & \circ
} \qquad
\xymatrix@=2mm@R=1mm{ 
\circ\ar@{-}[r] & \circ \ar@{-}[r] & \circ \ar@{-}[r] 
 & \circ \ar@{-}[r] & \circ \ar@{-}[r] & \circ 
}
$$ 
$$
\xymatrix@=2mm@R=2mm{ 
& \circ \ar@{-}[d] & &  \circ\ar@{-}[d] \\ 
\circ\ar@{-}[d] \ar@{-}[r] & \circ\ar@{-}[d]\ar@{-}[r] & \circ\ar@{-}[d]\ar@{-}[r] & 
 \circ\ar@{-}[d] \\ 
\circ\ar@{-}[r]\ar@{-}[d] & \circ\ar@{-}[r] & \circ\ar@{-}[r]\ar@{-}[d] & \circ \\
\circ  &   & \circ
} \qquad 
\xymatrix@=2mm@R=2mm{ 
 & & &  \circ\ar@{-}[d] \\
 & & &  \circ\ar@{-}[rd]\ar@{-}[ld] \\
 & & \circ\ar@{-}[r] &  \circ\ar@{-}[dl]\ar@{-}[dr]\ar@{-}[r] & \circ \\ 
\circ\ar@{-}[r] & \circ\ar@{-}[r]\ar@{-}[ru] & \circ & \circ\ar@{-}[u]  
& \circ\ar@{-}[r] & \circ\ar@{-}[r]\ar@{-}[lu]  & \circ \\
 & & &  \circ\ar@{-}[ul]\ar@{-}[ur]\ar@{-}[d]  \\ 
 & & &  \circ 
}
$$ 
$$
\xymatrix@=2mm@R=2mm{ 
\circ\ar@{-}[d] \\ 
\circ\ar@{-}[d] \ar@{-}[r] & \circ\ar@{-}[d] \\ 
\circ\ar@{-}[d] \ar@{-}[r] & \circ\ar@{-}[d]\ar@{-}[r] & \circ\ar@{-}[d]   \\ 
\circ\ar@{-}[r]\ar@{-}[d] & \circ\ar@{-}[r]\ar@{-}[d] & \circ\ar@{-}[r]\ar@{-}[d] 
 &  \circ\ar@{-}[d]\ar@{-}[r] & \circ\ar@{-}[d]\ar@{-}[r] & \circ\\
\circ\ar@{-}[r] &  \circ\ar@{-}[r] & \circ\ar@{-}[r] &  \circ\ar@{-}[r]\ar@{-}[d] & \circ\\
 & & & \circ 
} \qquad 
\xymatrix@=2mm@R=2mm{ 
\circ\ar@{-}[d] \\ 
\circ\ar@{-}[d] \\ 
\circ\ar@{-}[d] \\ 
\circ\ar@{-}[d] \ar@{-}[r] & \circ\ar@{-}[d] \\ 
\circ\ar@{-}[d] \ar@{-}[r] & \circ\ar@{-}[d]\ar@{-}[r] & \circ\ar@{-}[d]   \\ 
\circ\ar@{-}[r]\ar@{-}[d] & \circ\ar@{-}[r]\ar@{-}[d] & \circ\ar@{-}[r]\ar@{-}[d] & 
  \circ\ar@{-}[r] & \circ\ar@{-}[r] & \circ\\
\circ\ar@{-}[r]\ar@{-}[d] &  \circ \ar@{-}[r]\ar@{-}[d] & \circ\\
 \circ\ar@{-}[d]\ar@{-}[r] & \circ  \\
\circ
}
$$ 
$$
\xymatrix@=2mm@R=0mm{ 
\circ\ar@{-}[dd] \\ 
& \\
\circ\ar@{-}[dd] \\ 
 & \\
\circ\ar@{-}[dd] && &&  \circ\ar@{-}[dd]\\
 & \\ 
\circ\ar@{-}[dd] && &&  \circ\ar@{-}[dd] \\
 & \\ 
\circ\ar@{-}[dd] \ar@{-}[rr] && \circ\ar@{-}[dd]\ar@{-}[rr] & & \circ\ar@{-}[dd]  \\
 & && \circ\ar@{-}[dd] \\ 
\circ\ar@{-}[dd] \ar@{-}[rr] & & \circ\ar@{-}[dd]\ar@{-}[rr]\ar@{-}[ru]
 & & \circ\ar@{-}[dd]   \\ 
 & && \circ\ar@{-}[dd] \\
 \circ\ar@{-}[rr]  && \circ\ar@{-}[rr]\ar@{-}[ru]  && \circ\ar@{-}[r]  &
 \circ\ar@{-}[r] & \circ\ar@{-}[r] & \circ\\ 
  &&& \circ\ar@{-}[dd] \\
 & \\
  &&& \circ 
} \qquad
\xymatrix@=2mm@R=2mm{ 
\circ\ar@{-}[d] \\ 
\circ\ar@{-}[d] \\ 
\circ\ar@{-}[d] \\ 
\circ\ar@{-}[r]\ar@{-}[d] & \circ\ar@{-}[r] & \circ\ar@{-}[r] 
 &  \circ \ar@{-}[r] & \circ \ar@{-}[r] & \circ \ar@{-}[r] & \circ \ar@{-}[r] & \circ\\
\circ }
$$ 
$$
\xymatrix@=2mm@R=0mm{ 
\circ\ar@{-}[dd] && &&  \circ\ar@{-}[dd] \\
 & \\ 
\circ\ar@{-}[dd] \ar@{-}[rr] && \circ\ar@{-}[dd]\ar@{-}[rr] & & \circ\ar@{-}[dd]  \\
 & && \circ\ar@{-}[dd] \\ 
\circ \ar@{-}[dd] \ar@{-}[rr] & & \circ\ar@{-}[dd]\ar@{-}[rr]\ar@{-}[ru]
 & & \circ\ar@{-}[dd]   \\ 
 & && \circ\ar@{-}[dd] \\
 \circ\ar@{-}[rr]  && \circ\ar@{-}[rr]\ar@{-}[ru]  && \circ \\ 
  &&& \circ\ar@{-}[dd] \\
 & \\
  &&& \circ\ar@{-}[dd] \\
 & \\
  &&& \circ 
} \qquad 
\xymatrix@=2mm@R=2mm{ 
 & & & \circ \ar@{-}[d]  \\ 
 \circ\ar@{-}[r] & \circ\ar@{-}[r] & \circ \ar@{-}[r] & \circ\ar@{-}[r] & \circ\ar@{-}[r] 
& \circ\ar@{-}[r] &  \circ
}
$$ 
$$
\xymatrix@=2mm@R=0mm{ 
  && \circ\ar@{-}[dd] \\
 &&  \\
  && \circ\ar@{-}[dd] \\
 && \\
 && \circ\ar@{-}[dd] \\
 & \circ\ar@{-}[dd]\ar@{-}[ru] \\
 & & \circ\ar@{-}[dd] \\
\circ \ar@{-}[dd] \ar@{-}[r]\ar@/_/@{-}[rrrr] & \circ\ar@{-}[dd]\ar@{-}[rr]\ar@{-}[ru]
 & &  \circ\ar@{-}[dd]\ar@{-}[r] & \circ\ar@{-}[r]\ar@{-}[dd] & \circ\ar@{-}[r]\ar@{-}[dd] 
 & \circ\ar@{-}[r]  & \circ\ar@{-}[r]& \circ\\ 
 & & \circ \\
 \circ \ar@{-}[r]\ar@{-}[dd]\ar@/_/@{-}[rrrr]  & \circ\ar@{-}[rr]\ar@{-}[ru]  && \circ\ar@{-}[r]  & 
    \circ\ar@{-}[dd]\ar@{-}[r] & \circ \\ 
 & \\
  \circ\ar@{-}[rrrr]\ar@{-}[dd]  & && &  \circ  \\
 & \\
   \circ 
} \qquad
\xymatrix@=2mm@R=2mm{ 
&&&&\circ\ar@{-}[d] \\ 
&\circ\ar@{-}[d] \ar@{-}[r] & \circ\ar@{-}[d] \ar@{-}[r] &  \circ\ar@{-}[d] \ar@{-}[r] & 
 \circ\ar@{-}[d] \ar@{-}[r] &  \circ\ar@{-}[d]   \\ 
&\circ\ar@{-}[d] \ar@{-}[r] & \circ\ar@{-}[d]\ar@{-}[r] & \circ\ar@{-}[d] \ar@{-}[r] & 
  \circ\ar@{-}[r] & \circ\ar@{-}[r] & \circ   \\ 
&\circ\ar@{-}[r]\ar@{-}[d] & \circ\ar@{-}[r]\ar@{-}[d] & \circ \\
\circ\ar@{-}[r] &\circ\ar@{-}[r]\ar@{-}[d] &  \circ\ar@{-}[d] \\
&\circ\ar@{-}[r] &  \circ\ar@{-}[d] \\
 & & \circ 
}
$$ 
$$
\xymatrix@=2mm@R=2mm{ 
&&&&\circ\ar@{-}[d] \\ 
& \circ\ar@{-}[d] \ar@{-}[r] & \circ\ar@{-}[d] \ar@{-}[r] &  \circ\ar@{-}[d] \ar@{-}[r] & 
 \circ\ar@{-}[d]    \\ 
&\circ\ar@{-}[d] \ar@{-}[r] & \circ \ar@{-}[d]\ar@{-}[r] & \circ\ar@{-}[d] \ar@{-}[r] & 
  \circ   \\ 
&\circ\ar@{-}[r]\ar@{-}[d] & \circ\ar@{-}[r]\ar@{-}[d] & \circ \\
\circ\ar@{-}[r] &\circ\ar@{-}[r] &  \circ 
} \qquad
\xymatrix@=2mm@R=2mm{ 
&&\circ\ar@{-}[d] \\ 
& \circ\ar@{-}[d] \ar@{-}[r] & \circ\ar@{-}[d] \ar@{-}[r] &  \circ\ar@{-}[d]  
  \\ 
\circ\ar@{-}[r] &\circ\ar@{-}[d] \ar@{-}[r] & \circ \ar@{-}[d]\ar@{-}[r] & \circ\ar@{-}[d] \ar@{-}[r] & 
  \circ   \\ 
&\circ\ar@{-}[r] & \circ\ar@{-}[r]\ar@{-}[d] & \circ \\
 & &  \circ 
}
$$ 
$$
\xymatrix@=2mm@R=2mm{ 
&&\circ\ar@{-}[d] \\ 
& \circ\ar@{-}[d] \ar@{-}[r] & \circ\ar@{-}[d] \ar@{-}[r] & 
 \circ\ar@{-}[d]\ar@{-}[r] &  \circ\ar@{-}[d] \ar@{-}[r] & \circ     \\ 
\circ\ar@{-}[r] & \circ  \ar@{-}[r] & \circ  \ar@{-}[r] & \circ\ar@{-}[d] \ar@{-}[r] & 
   \circ   \\ 
 & & &  \circ 
} \qquad
\xymatrix@=2mm@R=2mm{ 
\circ\ar@{-}[d] \\ 
\circ\ar@{-}[d]\ar@{-}[r] & \circ \\ 
\circ\ar@{-}[d] \\ 
\circ\ar@{-}[d] \ar@{-}[r] & \circ\ar@{-}[d] \\ 
\circ\ar@{-}[d] \ar@{-}[r] & \circ \ar@{-}[d]\ar@{-}[r] & \circ\ar@{-}[d]   \\ 
\circ\ar@{-}[r]\ar@{-}[d] & \circ\ar@{-}[r]\ar@{-}[d] & \circ\ar@{-}[r]\ar@{-}[d] &  \circ\\
\circ\ar@{-}[r]\ar@{-}[d] &  \circ \ar@{-}[r]\ar@{-}[d] & \circ\\
 \circ\ar@{-}[d]\ar@{-}[r] & \circ  \\
\circ
}
$$ 
$$
\xymatrix@=2mm@R=2mm{ 
 & & & \circ \ar@{-}[d]  \\ 
 \circ\ar@{-}[r] & \circ \ar@{-}[r]\ar@{-}[d] & \circ \ar@{-}[r] & \circ\ar@{-}[r] & \circ \\
 & \circ 
} \qquad
\xymatrix@=2mm@R=2mm{ 
&&&\circ\ar@{-}[d]\ar@{-}[r] & \circ\ar@{-}[d]\ar@{-}[r] &\circ\ar@{-}[d]\ar@{-}[r] 
 &\circ\ar@{-}[d]\ar@{-}[r] & \circ \\ 
&&& \circ\ar@{-}[d] \ar@{-}[d]\ar@{-}[r] &\circ\ar@{-}[d]\ar@{-}[r] & \circ\ar@{-}[r]\ar@{-}[d] & \circ \\ 
 &&&\circ\ar@{-}[d] \ar@{-}[r] & \circ\ar@{-}[d]\ar@{-}[r] & \circ  \\ 
\circ\ar@{-}[d] \ar@{-}[r] & \circ \ar@{-}[d]\ar@{-}[r] & \circ\ar@{-}[d]\ar@{-}[r] &
 \circ\ar@{-}[d]\ar@{-}[r] & \circ\ar@{-}[d]   \\ 
\circ\ar@{-}[r]\ar@{-}[d] & \circ\ar@{-}[r]\ar@{-}[d] & \circ\ar@{-}[r]\ar@{-}[d] &  \circ\ar@{-}[r] & \circ \\
\circ\ar@{-}[r]\ar@{-}[d] &  \circ \ar@{-}[r]\ar@{-}[d] & \circ & & 
 & \circ\ar@{-}@/_/[rruuuuu]\\
 \circ\ar@{-}[d]\ar@{-}[r] & \circ  \\
\circ\ar@{-}@/_/[rrrrruu]
}
$$ 
$$
\xymatrix@=2mm@R=0mm{ 
\circ\ar@{-}[dd]\ar@{-}@/_1pc/[dddddddd]  \ar@{-}[r] & \circ\ar@{-}[dd]\ar@{-}[rr] & & \circ\ar@{-}[dd]  \\
 &&  \\ 
\circ\ar@{-}[dd] \ar@{-}[r]  & \circ\ar@{-}[dd]\ar@{-}[rr]\ar@{-}[rd]
  && \circ\ar@{-}[dd]\ar@{-}[rd]     \\ 
  && \circ\ar@{-}[dd]\ar@{-}[rr] && \circ\ar@{-}[dd]\ar@{-}[r] & \circ \ar@{-}[dd]\\
 \circ\ar@{-}[r]  & \circ\ar@{-}[rr]\ar@{-}[rd]  & &
 \circ\ar@{-}[rd] \\ 
  && \circ\ar@{-}[dd]\ar@{-}[rr] && \circ\ar@{-}[dd]\ar@{-}[r] & \circ\ar@{-}[dd] \\
 & \\
  && \circ\ar@{-}[rr] && \circ\ar@{-}[r] & \circ\ar@{-}@/^1pc/[dddddddd] \\
\circ\ar@{-}[dd] \ar@{-}[r] & \circ\ar@{-}[dd]\ar@{-}[rr] & & \circ\ar@{-}[dd]  \\
 &&  \\ 
\circ\ar@{-}[dd] \ar@{-}[r]  & \circ\ar@{-}[dd]\ar@{-}[rr]\ar@{-}[rd]
  && \circ\ar@{-}[dd]\ar@{-}[rd]     \\ 
  && \circ\ar@{-}[dd]\ar@{-}[rr] && \circ\ar@{-}[dd]\ar@{-}[r] & \circ \ar@{-}[dd]\\
 \circ\ar@{-}[r]  & \circ\ar@{-}[rr]\ar@{-}[rd]  & &
 \circ\ar@{-}[rd] \\ 
  && \circ\ar@{-}[dd]\ar@{-}[rr] && \circ\ar@{-}[dd]\ar@{-}[r] & \circ\ar@{-}[dd] \\
 & \\
  && \circ\ar@{-}[rr] && \circ\ar@{-}[r] & \circ 
} \qquad \qquad 
\xymatrix@=2mm@R=2mm{ 
 & \\
&&&&&\circ\ar@{-}[d]\ar@{-}[r] & \circ\ar@{-}[d]\ar@{-}[r] & \circ\ar@{-}[d]  \\ 
&\circ\ar@{-}@/^2pc/[rrrrrru]&&&&\circ\ar@{-}[d]\ar@{-}[r] & \circ\ar@{-}[d]\ar@{-}[r] & \circ\ar@{-}[d] \\ 
&&\circ\ar@{-}[d] \ar@{-}[r] & \circ\ar@{-}[d] \ar@{-}[r] &  \circ\ar@{-}[d] \ar@{-}[r] & 
 \circ\ar@{-}[d] \ar@{-}[r] &  \circ\ar@{-}[d] & \circ   \\ 
&&\circ\ar@{-}[d] \ar@{-}[r] & \circ \ar@{-}[d]\ar@{-}[r] & \circ\ar@{-}[d] \ar@{-}[r] & 
  \circ\ar@{-}[r] &  \circ   \\ 
&&\circ\ar@{-}[r]\ar@{-}[d] & \circ\ar@{-}[r]\ar@{-}[d] & \circ \\
\circ\ar@{-}[r]\ar@{-}[d] &\circ\ar@{-}[r]\ar@{-}[d] &\circ\ar@{-}[r]\ar@{-}[d] & 
 \circ\ar@{-}[d] && \circ\ar@{-}@/_/[rruuu] \\
\circ\ar@{-}[r]\ar@{-}[d] &\circ\ar@{-}[r]\ar@{-}[d] &\circ\ar@{-}[r]&  \circ
  &&& \circ\ar@{-}@/_/[ruuuu]\ar@{-}@/_2pc/[uuuruuu] \\
\circ\ar@{-}[r]\ar@{-}@/_2pc/[rrrrrru]\ar@{-}@/^2pc/[ruuuuuu] & \circ\ar@{-}[r] & 
  \circ\ar@{-}@/_/[rrrru]\ar@{-}@/_/[rrruu]  \\
& 
}
$$ 
$$
\xymatrix@=2mm@R=2mm{ 
& & &&&&& \circ\ar@{-}[d] \\
&&&&&\circ\ar@{-}[d]\ar@{-}[r] & \circ\ar@{-}[d]\ar@{-}[r] & \circ\ar@{-}[d]\ar@{-}[r] & \circ \ar@{-}[d] \\ 
&&\circ\ar@{-}@/^/[rrrrruu] &&&\circ\ar@{-}[d]\ar@{-}[r] & \circ\ar@{-}[d]\ar@{-}[r] & \circ\ar@{-}[r] & \circ \\ 
&&&\circ\ar@{-}[d] \ar@{-}[r] & \circ\ar@{-}[d] \ar@{-}[r] &  \circ\ar@{-}[d] \ar@{-}[r] &   \circ\ar@{-}[d]   \\ 
&&&\circ\ar@{-}[d] \ar@{-}[r] & \circ \ar@{-}[d]\ar@{-}[r] & \circ \ar@{-}[r] &   \circ   \\ 
&\circ\ar@{-}[r]\ar@{-}[d] &\circ\ar@{-}[r]\ar@{-}[d] &\circ\ar@{-}[r]\ar@{-}[d] & 
 \circ\ar@{-}[d] & \\
&\circ\ar@{-}[r]\ar@{-}[d] &\circ\ar@{-}[r]\ar@{-}[d] &\circ\ar@{-}[r]&  \circ
  && \circ\ar@{-}@/_2pc/[rruuuuu]\ar@{-}@/_5pc/[ruuuuuu]   \\
\circ\ar@{-}[r]\ar@{-}@/_5pc/[rrrrrru]\ar@{-}@/^/[rruuuuu] &\circ\ar@{-}[r]\ar@{-}[d] & \circ\ar@{-}[d] \\
& \circ\ar@{-}[r]\ar@{-}@/_2pc/[rrrrruu]& \circ \\
& \\
&
} \qquad \qquad \qquad
\xymatrix@=2mm@R=2mm{ 
\circ\ar@{-}[d] \ar@{-}[r] & \circ \ar@{-}[d]\ar@{-}[r] & \circ\ar@{-}[d]\ar@{-}[r] &
 \circ\ar@{-}[d]\ar@{-}[r] & \circ\ar@{-}[r] & \circ\ar@{-}[r]\ar@{-}[d] 
  & \circ\ar@{-}[r]\ar@{-}[d] & \circ\ar@{-}[r]\ar@{-}[d] & \circ\ar@{-}[d]   \\ 
\circ\ar@{-}[r]\ar@{-}[d] & \circ\ar@{-}[r]\ar@{-}[d] & \circ\ar@{-}[r]\ar@{-}[d] &  \circ && \circ\ar@{-}[r] 
 & \circ\ar@{-}[r]\ar@{-}[d] & \circ\ar@{-}[r]\ar@{-}[d] & \circ\ar@{-}[d] \\
\circ\ar@{-}[r]\ar@{-}[d] &  \circ \ar@{-}[r]\ar@{-}[d] & \circ  
  && && \circ \ar@{-}[r] & \circ \ar@{-}[r] \ar@{-}[d] & \circ\ar@{-}[d] \\
 \circ \ar@{-}[r]\ar@{-}@/_1pc/[rrrr] & \circ && & \circ\ar@{-}@/_1pc/[rrrr] 
  & && \circ \ar@{-}[r] & \circ  \\
 & }
$$
$$
\xymatrix@=2mm@R=0mm{ 
& & & \circ\ar@{-}[dd]\ar@{-}[rr] && \circ\ar@{-}[r]\ar@{-}[dd] &\circ\ar@{-}[r]\ar@{-}[dd] &\circ  \\ 
 & \\
& & & \circ\ar@{-}[dd]\ar@{-}[rr] && \circ\ar@{-}[r]\ar@{-}[dd] &\circ\ar@{-}[dd]   \\
& \circ\ar@{-}[dd]  \ar@{-}[r]& \circ\ar@{-}[rr]\ar@{-}[ru]\ar@{-}[dd]
 & & \circ\ar@{-}[ru] \ar@{-}[dd]   \\ 
& & & \circ\ar@{-}[rr] && \circ\ar@{-}[r] & \circ \\
& \circ\ar@{-}[r]\ar@{-}[dd]  & \circ\ar@{-}[rr]\ar@{-}[ru]\ar@{-}[dd] 
 && \circ\ar@{-}[dd]\ar@{-}[ru] \\ 
 & \\
\circ\ar@{-}[r] &  \circ\ar@{-}[r]  & \circ\ar@{-}[rr] & & \circ
} \qquad 
\xymatrix@=2mm@R=0mm{ 
 & & \circ\ar@{-}[dd]\ar@{-}[rr] && \circ\ar@{-}[r]\ar@{-}[dd] &\circ\ar@{-}[r]\ar@{-}[dd] &\circ\ar@{-}[dd]  \\ 
 & \\
 & & \circ\ar@{-}[dd]\ar@{-}[rr] && \circ\ar@{-}[r]\ar@{-}[dd] &\circ\ar@{-}[dd] &\circ\ar@{-}[dd]  \\
 \circ\ar@{-}[dd]  \ar@{-}[r]& \circ\ar@{-}[rr]\ar@{-}[ru]\ar@{-}[dd]
 & & \circ\ar@{-}[ru] \ar@{-}[dd]   \\ 
 & & \circ\ar@{-}[rr] && \circ\ar@{-}[r] & \circ &\circ\ar@{-}[dd] \\
 \circ \ar@{-}[r]\ar@{-}[dd]  & \circ\ar@{-}[rr]\ar@{-}[ru]\ar@{-}[dd] 
 && \circ\ar@{-}[dd]\ar@{-}[ru] \\ 
 & &&&&&\circ\\
  \circ\ar@{-}[r]\ar@{-}[dd]\  & \circ\ar@{-}[rr] & & \circ\\ 
  & \\
 \circ\ar@{-}[r] & \circ\ar@{-}[rr] && \circ\ar@{-}[r] & \circ\ar@{-}[rruuu]  
}
$$ 
\end{itemize}

%
\section{Component sizes}

The following tables show the number of connected components for the flip graph, for the square, pentagon, hexagon, heptagon, octagon, and nonagon. They were found using a computer search, after generating all triangulations using the same recursive method as Figure~\ref{fig:triangulationscountingoctagon}, and then testing which pairs differ by a flip.

\noindent
Square: $n = 2$\\
\begin{tabular}{|l|r|r|}
 \hline 
size  & 1 & 2  \\
number & 4 & 2  \\
\hline 
\end{tabular}\\

\noindent
Pentagon: $n = 3$\\
\noindent
\begin{tabular}{|l|r|r|}
 \hline 
size  & 1 & 3  \\
number & 10 & 10  \\
\hline 
\end{tabular}

\vskip.2cm

\noindent
Hexagon: $n = 4$\\
\noindent
\begin{tabular}{|l|r|r|r|r|}
 \hline 
size  & 1 & 4 & 5 & 6  \\
number & 28 & 16 & 12 & 12\\
\hline 
\end{tabular}

\vskip.2cm

\noindent 
Heptagon: $n = 5$\\
\noindent
\begin{tabular}{|l|r|r|r|r|r|r|}
 \hline 
size  & 1 & 5 & 6 & 9 & 10 & 12  \\
number & 84 & 14 & 28 & 42 & 14 & 42\\
\hline 
\end{tabular}

\vskip.2cm

\noindent
Octagon: $n = 6$ \\
\begin{tabular}{|l|r|r|r|r|r|r|r|r|r|r|r|r|r|r|r|r|r|r|r|r|r|r|}
 \hline 
size  & 1 & 6 & 7 & 8 & 10 & 12 & 13 & 14 & 15 & 16 & 18 & 19 & 20 & 21 & 22 & 23 & 26 & 28 & 29 & 32 & 34 & 36  \\
number & 264 & 16 & 16 & 16 & 16 & 64 & 8 & 8 & 16 & 32 & 32 & 64 & 40 & 16 & 32 & 32 & 16 & 8 & 16 & 2 & 8 & 4\\
\hline 
\end{tabular}

\vskip.2cm

\noindent
Nonagon \\
\begin{tabular}{|l|r|r|r|r|r|r|r|r|r|r|r|r|r|r|r|r|r|r|r|r|r|r|r|r|r|r|r|r|r|r|r|r|r|r|}
 \hline 
size  & 1 & 7 & 9 & 13 & 15 & 17 & 18 & 21 & 23 & 27 & 28 & 29 & 31 & 32 & 33 & 34 & 35 
\\
number & 858 & 18 & 36 & 36 & 54 & 36 & 36 & 18 & 72 & 126 & 72 & 6 & 54 & 36 & 18 & 72 & 18 
\\
\hline 
\end{tabular}

\vskip.2cm

\noindent
Nonagon, continued\\
\begin{tabular}{|l|r|r|r|r|r|r|r|r|r|r|r|r|r|r|r|r|r|r|}
\hline
size &  36 & 37 & 38 & 41 & 42 & 44 & 45 & 46 & 53 & 55 & 57 & 59 & 61 & 66 & 70 & 71 & 79  \\
number  & 108 & 36 & 72 & 36 & 36 & 36 & 108 & 36 & 54 & 36 & 18 & 54 & 36 & 36 & 36 & 18 & 6\\
\hline
\end{tabular}

\subsection*{Acknowledgment} 

This write-up is a result of the ``Count Me In''  summer school, organised by David Jordan, Milena Hering, and Nick Sheridan, funded by ICMS, University of Edinburgh and Glasgow Mathematical Journal Trust.

Diana Bergerova, Jenni Voon and Lejie Xu thank Karin Baur for suggesting the topic of the paper and supervising the work as well as for providing diagrams of connected components from work with Mark Parsons. They are also grateful to their tutor and mentor Stefania Lisai for helping the project with her comments and notes. 

Karin Baur is supported by a Royal Society Wolfson Award, RSWF$\backslash$R1$\backslash$180004 and by the EPSRC Programme Grant EP/W007509/1.

\medskip

\bibliographystyle{alphaurl}
\bibliography{references}

\end{document}